%% SIAM Article Template
\documentclass{siamart0516}

%\documentclass{article}

 %%Information that is shared between the article and the supplement
% (title and author information, macros, packages, etc.) goes into
\usepackage{amsfonts}
\usepackage{multirow}
\usepackage{float}
\usepackage{array}
\usepackage[labelfont=bf]{caption}
\usepackage{textcomp}
\usepackage{amsmath}
\usepackage{amssymb}
\usepackage{algorithmic}
\usepackage{mathtools}
\usepackage{enumitem}

\setitemize{noitemsep,topsep=0pt,parsep=0pt,partopsep=0pt}

\DeclareMathOperator{\argmin}{argmin}

\newcommand{\R}{\mathbb{R}}

\newcommand{\inner}[2]{\langle{#1},{#2}\rangle}

\newcommand{\norm}[1]{\|#1\|}

\newcommand{\dom}{{\mbox{\rm dom\,}}}

\newcommand{\vgap}{\vspace{.1in}}

\newcommand{\tx}{\tilde x}

\newcommand{\lam}{\lambda}

\newcommand{\tv}{\tilde v}

\newcommand{\bConv}[1]{\overline{\mbox{\rm Conv}}\,(\Re^{#1})}

\newcommand{\bi}{\begin{itemize}}
\newcommand{\ei}{\end{itemize}}
\newcommand{\ba}{\begin{array}}
\newcommand{\ea}{\end{array}}
\newcommand{\gap}{\hspace*{2em}}

% ex_shared.tex. If there is no supplement, this file can be included
% directly.
%%\input{def}
% SIAM Shared Information Template
% This is information that is shared between the main document and any
% supplement. If no supplement is required, then this information can
% be included directly in the main document.

% Packages and macros go here
\usepackage{lipsum}
\usepackage{amsfonts}
\usepackage{graphicx}
\usepackage{epstopdf}
\usepackage{algorithmic}
\ifpdf
  \DeclareGraphicsExtensions{.eps,.pdf,.png,.jpg}
\else
  \DeclareGraphicsExtensions{.eps}
\fi

% Declare title and authors, without \thanks
\newcommand{\TheTitle}{Complexity of a quadratic penalty accelerated  \\
inexact proximal point method for solving linearly constrained nonconvex composite programs}

% Sets running headers as well as PDF title and authors
\headers{Quadratic penalty accelerated method}{Weiwei Kong, J.G. Melo and R.D.C. Monteiro}

% Title. If the supplement option is on, then "Supplementary Material"
% is automatically inserted before the title.
\title{{\TheTitle}}

% Authors: full names plus addresses.
%$\&$
\author{
 Weiwei Kong \thanks{School of Industrial and Systems
    Engineering, Georgia Institute of
    Technology, Atlanta, GA, 30332-0205. (E-mails: {\tt
       wkong37@gatech.edu} and  {\tt monteiro@isye.gatech.edu}). The work of Renato D.C. Monteiro
    was partially supported by NSF Grant CMMI-1300221, ONR Grant N00014-18-1-2077 and CNPq Grant 406250/2013-8.}
    \and
      Jefferson G. Melo \thanks{Institute of Mathematics and Statistics, Federal University of Goias, Campus II- Caixa
    Postal 131, CEP 74001-970, Goi\^ania-GO, Brazil. (E-mail:  {\tt jefferson@ufg.br}). The work of this author was
    supported in part by  CNPq Grants 406250/2013-8,  406975/2016-7  and FAPEG/GO.}
    \and
    Renato D.C. Monteiro \footnotemark[1]
}
    %Max L.N. Gon\c calves
    %\thanks{Institute of Mathematics and Statistics, Federal University of Goias, Campus II- Caixa
    %Postal 131, CEP 74001-970, Goi\^ania-GO, Brazil. (E-mails: {\tt
       %maxlng@ufg.br} and {\tt jefferson@ufg.br}). This work was done while these authors were a postdoc at the School of Industrial and Systems Engineering, Georgia Institute of Technology, Atlanta, GA.  The work of these authors was
    %supported in part by  CNPq Grant 406250/2013-8, 444134/2014-0, 309370/2014-0, 200852/2014-0, 201047/2014-4  and FAPEG/GO.}
    %\and
      %Jefferson G. Melo \footnotemark[2]
    %\and
   % Renato D.C. Monteiro
    %\thanks{School of Industrial and Systems
   % Engineering, Georgia Institute of
    %Technology, Atlanta, GA, 30332-0205.
    %(email: {\tt monteiro@isye.gatech.edu}). The work of this author
   % was partially supported by NSF Grant CMMI-1300221.}

\usepackage{amsopn}

% Weiwei Kong \thanks{School of Industrial and Systems
%    Engineering, Georgia Institute of
%    Technology, Atlanta, GA, 30332-0205. (E-mails: {\tt
%       wkong37@gatech.edu} $\&$  {\tt monteiro@isye.gatech.edu}). The work of Renato D.C. Monteiro
%    was partially supported by NSF Grant CMMI-1300221, ONR Grant N00014-18-1-2077 and CNPq Grant 406250/2013-8.}
%    \and
%      Jefferson G. Melo \thanks{Institute of Mathematics and Statistics, Federal University of Goias, Campus II- Caixa
%    Postal 131, CEP 74001-970, Goi\^ania-GO, Brazil. (E-mail:  {\tt jefferson@ufg.br}). The work of this author was
%    supported in part by  CNPq Grants 406250/2013-8,  444134/2014-0,  406975/2016-7  and FAPEG/GO.}
%    \and
%    Renato D.C. Monteiro \footnotemark[1]
%

%%% Local Variables: 
%%% mode:latex
%%% TeX-master: "ex_article"
%%% End: 

% Optional PDF information
\ifpdf
\hypersetup{
  pdftitle={Quadratic Penalty Method},
  pdfauthor={WeiWei}
}
\fi

\begin{document}

\maketitle

% REQUIRED
\begin{abstract}
This paper  analyzes the iteration-complexity of a quadratic penalty accelerated inexact proximal point method  for solving linearly constrained nonconvex  composite programs. More specifically, the objective function is of the form $f+h$ where $f$ is a differentiable function whose gradient is Lipschitz continuous and $h$ is a closed convex function with possibly unbounded domain.  The method, basically, consists of applying an accelerated inexact proximal point method for solving approximately a sequence of quadratic penalized subproblems associated to the linearly constrained problem. Each  subproblem of the proximal point method is in turn approximately solved by an accelerated composite gradient (ACG) method.  It is shown that
the proposed scheme generates a $\rho-$approximate stationary point  in at most $\mathcal{O}(\rho^{-3})$ ACG iterations.
Finally, numerical results  showing the efficiency of the proposed method are also given.
\end{abstract}

% REQUIRED
\begin{keywords}
  quadratic penalty method, composite nonconvex program,
 iteration-complexity,  inexact proximal point method, first-order accelerated gradient method.
\end{keywords}

% REQUIRED
\begin{AMS}
   47J22, 90C26, 90C30, 90C60, 
  65K10.
\end{AMS}

\section{Introduction}\label{sec:int}
Our main goal in this paper is to describe and establish the iteration-complexity of a quadratic penalty
accelerated inexact proximal point (QP-AIPP) method  for solving the
linearly constrained nonconvex composite minimization problem
\begin{equation}\label{eq:probintro}
\min \left\{ f(z) + h(z) : Az=b, \,  z \in \Re^n \right \}
\end{equation}
where $A \in \Re^{l\times n}$,  $b \in \Re^l$, $h:\Re^n \to (-\infty,\infty]$ is a proper lower-semicontinuous convex function %whose $\dom h$ is bounded
and $f$ is a real-valued differentiable (possibly nonconvex) function whose gradient  is $L_f$-Lipschitz continuous on  $\dom h$.
For given tolerances $\hat \rho>0$ and $\hat \eta >0$, the main result of this paper shows that the 
QP-AIPP method, started from any point in $\dom h$ (but not necessarily satisfying $Az=b$), obtains a triple $(\hat z, \hat v, \hat p)$ satisfying
\begin{equation}\label{eq:approxOptimCond_Intro}
\hat v \in \nabla f(\hat z) + \partial h(\hat z) + A^* \hat p, \quad \|\hat v\| \le \hat \rho, \quad \|A \hat z-b\| \le \hat \eta
\end{equation}%???? ${\cal O}(1/(\rho^2\eta^3))$ (resp., 
in at most ${\cal O}(\hat \rho^{-2} \hat \eta^{-1})$ accelerated composite gradient (ACG) iterations.  It is worth noting that this result is obtained
under the mild assumption that the optimal value of \eqref{eq:probintro} is finite and hence assumes neither that $\dom h$ is bounded nor that
\eqref{eq:probintro} has an optimal solution.
%if started from an infeasible (resp., feasible) point.

The QP-AIPP  method is based on solving penalized subproblems
of the form
\begin{equation}\label{eq:PenaltyProb-intro}
\min \left\{ f(z) + h(z) + \frac{c}2 \| Az-b\|^2 : z \in \Re^n \right \}
\end{equation}
for an increasing sequence of positive penalty parameters $c$.
These subproblems in turn are approximately solved so as to satisfy the first two
conditions in \eqref{eq:approxOptimCond_Intro} and the QP-AIPP method terminates when $c$ is large enough so as to guarantee that
the third condition in \eqref{eq:approxOptimCond_Intro} also hold.
Moreover, each subproblem in turn is approximately solved by an accelerated inexact proximal point (AIPP)
method which solves a sequence of prox subproblems of the form
\begin{equation}\label{eq:penPbRegIntro}
\min \left\{ f(z) + h(z) + \frac{c}2 \| Az-b\|^2 + \frac{1}{2\lam}\|z-z_{k-1}\|^2 : z \in \Re^n \right \}
\end{equation}
where $z_{k-1}$ is the previous iterate and the next one, namely $z_k$, is a suitable approximate solution of \eqref{eq:penPbRegIntro}.
Choosing $\lambda$ sufficiently small ensures that the objective function of \eqref{eq:penPbRegIntro} is a convex composite optimization
which is approximately solved by an ACG method.
%For given tolerances $\rho>0$ and $\eta >0$, the main result of this paper shows that the 
%accelerated penalty method outlined above obtains a triple $(z,v,w)$ satisfying
%$$v \in \nabla g(z) + \partial h(z) + A^* w, \quad \|v\| \le \rho, \quad \|Az-b\| \le \eta$$ in at most
%%???? ${\cal O}(1/(\rho^2\eta^3))$ (resp., 
%${\cal O}(1/[\rho \eta(\rho+\eta)])$ accelerated gradient iterations.
%%if started from an infeasible (resp., feasible) point.

More generally, the AIPP method mentioned above solves problems of the form
\begin{equation}\label{eq:PenProb2Intro}
\phi_*:=\min \left\{ \phi(z):=g(z) + h(z)  : z \in \Re^n \right \}
\end{equation}
where $h$ is as above and $g$ is a differentiable function whose gradient is $M$-Lipschitz continuous on $\dom h$
and whose lower curvature is bounded below on $\dom h$ by some constant $m \in (0,M]$,  i.e.,
\[
 g(u)-\left[g(z)+\left\langle \nabla g(z),u-z\right\rangle \right] \ge - \frac{m}{2}\|u-z\|^{2}%\leq\frac{M}{2}\|u-z\|^{2}%
\quad \forall \, z,u \in \dom h.
\]
%for some $0<m\leq M$.  The Note that $m$ measures the lower curvature of $g$ on $\dom\, h$.
%Functions  satisfying the latter inequality are said to be weak-convex. This paper is concerned mainly with the case in which  $M>>m$. }
Note that the penalized subproblem \eqref{eq:PenaltyProb-intro} is a special case of \eqref{eq:PenProb2Intro} with
$g(z) = f(z) +  (c/2) \| Az-b\|^2$, and hence $m=L_f$ and $M=L_f+c\|A\|^2$.
It is well-known that the composite gradient method finds a $\rho$-solution of \eqref{eq:PenProb2Intro},
i.e., a pair $(\bar{z},\bar{v})\in \dom h\times \Re^n$ such that $\bar{v}\in \nabla f(\bar{z})+\partial h(\bar{z})$ and $\|\bar{v}\| \leq \rho$, in at most ${\cal O}(M(\phi(z_0)-\phi_*)/\rho^2)$ composite-type iterations where  $z_0$ is the initial point.
On the other hand, 
%  \textcolor{red}{if $d_0:= \inf\{\|z_0-z^*\|:z^*\; \mbox{\rm is an optimal solution of \eqref{eq:PenProb2Intro}} \}$}, then 
the AIPP method finds such solution in at most
\begin{equation}\label{eq:boundIntro}
{\cal O}\left ( \frac{\sqrt{Mm}}{{\rho^2}}\min\left\{\phi(z_0)-\phi_*,md_0^2\right\} +  \sqrt{\frac{M}{m}}  \log^{+}_1\left(\frac{M}{m}\right) \right)
%\quad
%\mathcal{O}\left\{\sqrt{\frac{M}{m} }\left(\frac{m^2 R(\phi;\lambda) + } {\hat \rho^2}
%+  \log^{+}_1\left(\frac{M}{m}\right)\right)\right\}
\end{equation}
composite type-iterations
where $d_0$ denotes the distance of $z_0$ to the set of optimal solutions of \eqref{eq:PenProb2Intro}.
Hence, its complexity is better than that for the composite gradient method by a factor of $\sqrt{M/m}$. 
The main advantage of the AIPP method is that its iteration-complexity bound has a lower
dependence on $M$, i.e., it is ${\cal O}(\sqrt{M})$ instead of the ${\cal O}(M)$-dependence of the
composite gradient method. Hence, the use of  the AIPP method instead of the
composite gradient method to solve \eqref{eq:penPbRegIntro} (whose associated $M={\cal O}(c)$) in the scheme outlined above
is  both theoretically and computationally appealing.

\vgap

{\it Related works.} 
Under the assumption that domain of $\phi$ is bounded, \cite{nonconv_lan16} presents an ACG method applied directly to \eqref{eq:PenProb2Intro}
 which obtains a $\rho$-approximate solution
of \eqref{eq:PenProb2Intro} in
\begin{equation} \label{eq:compl-lan}
{\cal O}\left( \frac{ MmD_h^2}{{ \rho}^2} + \left(\frac{Md_0}{\rho}\right)^{2/3}\right)
\end{equation}
where $D_h$ denotes the diameter of the domain of $h$.
Motivated by \cite{nonconv_lan16}, other papers % (see for example \cite{Li_Lin2015}) %paper7 
have proposed ACG methods for solving \eqref{eq:PenProb2Intro}
under different assumptions on the functions $g$ and $h$
%composite component? (5 is NO, 6 is YES, 8 is YES, 17 is YES, 25 is YES)
%inside acceleration? (5 is YES, 6 is NO, 8 is NO, 17 is NO, 25 is NO)
%outside acceleration? (5 is NO, 6 is YES, 8 is YES, 17 is YES, 25 is YES)
%Universal catalyst for first-order optimization (Lin et al. ---- LinUniversal.pdf) not related (for convex problems only)
%Natasha 2: Faster Non-Convex Optimization than SGD (Zeyuan Alle-Zhu --- paper12.pdf) (not related)
(see for example \cite{Aaronetal2017,Paquette2017,LanUniformly,Li_Lin2015,rohtua}).
 In particular, their analyses exploit the lower curvature $m$ and the work \cite{Aaronetal2017}, which assumes $h=0$, establishes
a complexity which depends on $\sqrt{M} \log M$ instead of $M$ as in \cite{nonconv_lan16}.  As in the latter work,
our AIPP method  also uses the idea of solving a sequence of convex proximal subproblems by an ACG
method, but solves them in a more relaxed manner and, as a result, achieves the complexity
bound \eqref{eq:boundIntro} which improves the one in \cite{Aaronetal2017} by a factor
of $\log( M/\rho)$. It should be noted that the second complexity bound in \eqref{eq:boundIntro} in terms of $d_0$ is new
in the context of the composite nonconvex problem \eqref{eq:PenProb2Intro} and follows as a special case of a more general bound, namely \eqref{auxcomplex00},
which actually unifies both bounds in \eqref{eq:boundIntro}. Moreover, in contrast to the analysis of \cite{nonconv_lan16},
ours does not assume that $D_h$ is finite.
%and that both bounds in \eqref{??} are derived in the context composite nonconex problems instead \textcolor{red}{and also adds a new bound depending on $d_0$??????}.
Also, % in the context of convex-concave saddle point problems and monotone variational inequalities,
inexact proximal point methods and HPE variants of the ones studied in \cite{monteiro2010complexity,Sol-Sv:hy.ext}
for solving convex-concave saddle point problems and monotone variational inequalities,
which inexactly solve a sequence of proximal suproblems by means of an ACG variant, were previously proposed by \cite{YheMoneiroNash,YHe2,OliverMonteiro,MonteiroSvaiterAcceleration,LanADMM}.
The behavior of an accelerated gradient method near saddle points of unconstrained instances of \eqref{eq:PenProb2Intro} (i.e., with $h=0$) is studied in \cite{Jwright2017}.

Finally, complexity analysis of first-order quadratic penalty methods for solving special convex instances of \eqref{eq:probintro}
where $h$ is an indicator function
%%linearly constrained conic convex optimization problems
was first studied in \cite{LanRen2013PenMet} and further analyzed in
\cite{Aybatpenalty,molinari2019alternating,IterComplConicprog}.
%Motivated 
%\cite{IterComplConicprog}  presents iteration-complexity analysis of inexact first order Lagrangian and penalty methods
%for solving cone constrained convex problems. 
Papers \cite{LanMonteiroAugLag,Patrascu2017} study the iteration-complexity of first-order augmented Lagrangian methods for solving
the latter class of convex problems.
%Paper~\cite{PtointPenMet} analyzes iteration-complexity of an exact penalty method for solving nonlinear equality-constrained
%programming. Papers~\cite{ProxAugLag_Ming,Hong2016,MJRNonConvex2block,Jiang2016} analyzes iteration-complexity of Augmented Lagrangian and ADMM type methods for some class of non-convex optimization problems; however, none of these papers considers acceleration  schemes in their studies.
%Paper~\cite{GabrielHaeser} analyzes the iteration-complexity of an interior trust-region method for approximately
%solving a linearly contrained optimization problem  without differentiability on the boundary of the feasible region. 
The authors are not aware of earlier papers dealing with complexity analysis of quadratic penalty methods
for solving nonconvex constrained optimization problems.
However, \cite{ProxAugLag_Ming} studies the complexity of a proximal augmented Lagrangian method
for solving nonconvex instances of \eqref{eq:probintro}
under the very strong assumption that $\nabla f$ is Lipschitz continuous everywhere and $h=0$.

%Paper \cite{Aaronetal2017} %paper3
%studies a scheme which is a combination of an accelerated gradient method and a proximal point method and uses this scheme as a sub-routine of a major procedure to compute approximate solutions of a non convex optimization problem with second order informations.
%
%Paper \cite{Paquette2017} %paper4 
%considers problems as in \eqref{eq:PenProb2Intro}  where  $g$ is a composition of a convex function and a smooth map. Iteration-complexity of  a prox-linear method combined with an accelerated gradient method is analyzed for computing approximate stationary points for the considered problem. 
%
%
%Paper \cite{LanRen2013PenMet} analyzes iteration-complexity of an accelerated scheme for computing approximate solutions of a convex linearly constrained optimization problem  via a quadratic penalty method.

%Paper \cite{Jwright2017} studies the behavior of accelerated gradient methods near critical points of nonconvex functions.

%Since then Papers dealing with the case $m=M$ 

{\it Organization of the paper.}  Subsection~\ref{sec:DefNot} contains basic definitions and notation used in the paper.
Section~{2} is divided into two subsections. The first one introduces the composite nonconvex optimization problem  and discusses some approximate solutions criteria. The second subsection is devoted to the study of a general inexact proximal point framework to solve nonconvex optimization problems. In this subsection, we also show that a composite gradient method can be seen as an instance of the latter framework.  Section~\ref{sec:accgradmet} is divided into two subsections. The first one reviews an ACG method and its properties. Subsection~\ref{sec:AIPPmet} presents  the AIPP method and  its iteration-complexity analysis. Section~\ref{sec:penaltymet} states and analyzes the  QP-AIPP method for solving linearly constrained nonconvex composite  optimization problems. 
Section~\ref{sec:computResults} presents computational results. Section \ref{sec:remarks} gives some concluding remarks.
Finally, the appendix  gives the proofs of some technical results needed in our presentation.
% some complementary results and  basic proofs.

\subsection{Basic definitions and notation} \label{sec:DefNot}

This subsection provides some basic definitions and notation used
in this paper.

The set of real numbers is denoted by $\Re$. The set of non-negative real numbers  and 
the set of positive real numbers are denoted by $\Re_+$ and $\Re_{++}$, respectively. We let $\Re^2_{++}:=\Re_{++}\times \Re_{++}$.
Let $\Re^n$ denote the standard $n$-dimensional Euclidean 
space with  inner product and norm denoted by $\left\langle \cdot,\cdot\right\rangle $
and $\|\cdot\|$, respectively. For $t>0$, define %$\log^+(t):= \max\{\log t ,0\}$ and
$\log^+_1(t):= \max\{\log t ,1\}$. The diameter of a set $D \subset \Re^n$ is defined as $\sup \{\|z-z'\|:z,z'\in D\}$.

Let $\psi: \Re^n\rightarrow (-\infty,+\infty]$ be given. The effective domain of $\psi$ is denoted by
$\dom \psi:=\{x \in \Re^n: \psi (x) <\infty\}$ and $\psi$ is proper if $\dom \psi \ne \emptyset$.
Moreover, a proper function $\psi: \Re^n\rightarrow (-\infty,+\infty]$ is $\mu$-strongly convex for some $\mu \ge 0$ if
$$
\psi(\alpha z+(1-\alpha) u)\leq \alpha \psi(z)+(1-\alpha)\psi(u) - \frac{\alpha(1-\alpha) \mu}{2}\|z-u\|^2
%\quad \forall z, u \in \dom \psi, \, \forall \alpha \in [0,1].
$$
for every $z, u \in \dom \psi$ and $\alpha \in [0,1]$.
If $\psi$ is differentiable at $\bar z \in \Re^n$, then its affine   approximation $\ell_\psi(\cdot;\bar z)$ at $\bar z$ is defined as
\begin{equation}\label{eq:defell}
\ell_\psi(z;\bar z) :=  \psi(\bar z) + \inner{\nabla \psi(\bar z)}{z-\bar z} \quad \forall  z \in \Re^n.
\end{equation}
%The effective domain of $\psi$ is denoted by $\dom \psi:=\{x \in \Re^n: \psi (x) <\infty\}$.
Also, for $\varepsilon \ge 0$,  its \emph{$\varepsilon$-subdifferential} at $z \in \dom \psi$ is denoted by
\begin{equation}\label{eq:epsubdiff}
\partial_\varepsilon \psi (z):=\left\{ v\in\Re^n: \psi(u)\geq \psi(z)+\left\langle v,u-z\right\rangle -\varepsilon,\forall u\in\Re^n\right\}.
\end{equation}
%for every $z \in \dom \psi$.
The subdifferential of $\psi$ at $z \in \dom \psi$, denoted by $\partial \psi (z)$, corresponds to  $\partial_0 \psi(z)$.
The set of all proper lower semi-continuous convex functions $\psi:\Re^n\rightarrow (-\infty,+\infty]$  is denoted by $\bConv{n}$.

The proof of the following result can be found in \cite[Proposition 4.2.2]{Hiriart2}.

\begin{proposition}\label{prop:transpForm}  Let $\psi:\Re^n\rightarrow (-\infty,+\infty]$, $z, \bar z \in \dom \psi$ and $v \in \Re^n$ be given and
assume  that $v\in\partial \psi (z)$. Then,
% If $v\in\partial \psi (z)$ then
$v\in \partial_\varepsilon \psi (\bar{z})$
where $\varepsilon =  \psi(\bar{z})-\psi(z)-\langle v, \bar{z}-z\rangle\geq0.$
\end{proposition}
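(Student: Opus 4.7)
The plan is to prove both claims directly from the defining inequalities in \eqref{eq:epsubdiff}, since the statement is essentially a bookkeeping identity relating the subgradient inequality anchored at $z$ to an $\varepsilon$-subgradient inequality anchored at $\bar z$.

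First, I would unpack the hypothesis $v\in\partial\psi(z)=\partial_0\psi(z)$, which by \eqref{eq:epsubdiff} gives the global inequality $\psi(u)\ge \psi(z)+\langle v,u-z\rangle$ for all $u\in\Re^n$. To verify that the quantity $\varepsilon:=\psi(\bar z)-\psi(z)-\langle v,\bar z-z\rangle$ is nonnegative, I would simply specialize this inequality to $u=\bar z$, which is legitimate because $\bar z\in\dom\psi$.

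Next, I would establish that $v\in\partial_\varepsilon\psi(\bar z)$ by rewriting the right-hand side of the subgradient inequality. Concretely, for any $u\in\Re^n$ the identity
\begin{equation*}
\psi(z)+\langle v,u-z\rangle = \psi(\bar z)+\langle v,u-\bar z\rangle-\bigl[\psi(\bar z)-\psi(z)-\langle v,\bar z-z\rangle\bigr] = \psi(\bar z)+\langle v,u-\bar z\rangle-\varepsilon
\end{equation*}
holds by adding and subtracting $\psi(\bar z)+\langle v,u-\bar z\rangle$, so the subgradient inequality at $z$ is literally the $\varepsilon$-subgradient inequality at $\bar z$ required by \eqref{eq:epsubdiff}.

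There is no real obstacle here; the whole content is the algebraic identity above together with one instantiation of the subgradient inequality. Note also that convexity of $\psi$ is not needed, since the definitions \eqref{eq:epsubdiff} used in the paper are stated as global affine-minorant inequalities that make sense for arbitrary proper $\psi$.
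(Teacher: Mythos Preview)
Your proof is correct and is exactly the standard direct argument: specialize the subgradient inequality at $z$ to $u=\bar z$ to get $\varepsilon\ge 0$, then add and subtract to rewrite the same inequality as the $\varepsilon$-subgradient inequality at $\bar z$. The paper does not supply its own proof but simply cites \cite[Proposition~4.2.2]{Hiriart2}, whose argument is essentially the one you have written.
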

%
%A function $\psi: \Re^n\rightarrow (-\infty,+\infty]$ is said to be $\mu$-strongly convex for some $\mu \ge 0$ if
%
%$$
%\psi(\alpha z+(1-\alpha) u)\leq \alpha \psi(z)+(1-\alpha)\psi(u) - \frac{\alpha(1-\alpha) \mu}{2}\|z-u\|^2
%%\quad \forall z, u \in \dom \psi, \, \forall \alpha \in [0,1].
%$$
%for every $z, u \in \dom \psi$ and $\alpha \in [0,1]$.
%If $\psi$ is differentiable at $\bar x \in \Re^n$, then its affine approximation $\ell_\psi(\cdot;\bar x)$ at $\bar x$ is defined as
%\begin{equation}\label{eq:defell}
%\ell_\psi(z;\bar z) :=  \psi(\bar z) + \inner{\nabla \psi(\bar z)}{z-\bar z} \quad \forall  z \in \Re^n.
%\end{equation}
% \noindent The set of proper, closed, and  convex functions is denoted by $\bConv$. 

%%%%%%%%%%%%%%%%%%%%%%%%%%%%%%%%%%%%%%%%%%%%%%%%%%%%%%%%%%%%%%%%%%%%%%
%%%%%%%%%%%%%%%%%%%%%%%%%%%%%%%%%%%%%%%%%%%%%%%%%%%%%%%%%%%%%%%%%%%%%%%

\section{Inexact proximal point method for nonconvex optimization}
This section contains two subsections. The first one states the composite nonconvex optimization (CNO) problem and 
discusses some notions of approximate solutions.
The second subsection proposes and analyzes a general framework for solving nonconvex optimization problems and
shows under very mild conditions that the composite gradient method is an instance of the general framework.

%%%%%%%%%%%%%%%%%%%%%%%%%%%%%%%%%%%%%%%%%%%
%%%%%%%%%%%%%%%%%%%%%%%%%%%%%%%%%%%%%%%%%%%

\subsection{The CNO problem and corresponding approximate solutions}\label{subsec:approxsol}
This subsection describes the CNO problem which will be the main subject of our analysis in
Subsection \ref{sec:AIPPmet}.
It also describes different notions of approximate solutions for the CNO problem and discusses their relationship. 

The CNO problem we are interested in is \eqref{eq:PenProb2Intro} where the following conditions are assumed to hold:
\begin{itemize}
\item[(A1)]$h \in \bConv{n}$;  
\item[(A2)] $g$ is a   differentiable function on $\dom h$ which, for some $M\geq m>0$, satisfies
\begin{equation}\label{ineq:concavity_g}
- \frac{m}{2}\|u-z\|^{2}\leq g(u)-\ell_g(u;z) \leq\frac{M}{2}\|u-z\|^{2}
\quad\forall z,u\in \dom h;
\end{equation}
\item[(A3)] $\phi_*>-\infty$.
\end{itemize}

We now make a few remarks about the above assumptions.  First, if $\nabla g$ is assumed to be $M$-Lipschitz
continuous, then \eqref{ineq:concavity_g} holds with $m=M$. However, our interest is in the case where
$0<m \ll M$ since this case naturally arises in the context of penalty methods for solving
linearly constrained composite nonconvex optimization problems as will be seen in Section \ref{sec:penaltymet}.
Second, it is well-known that a necessary condition for $z^*\in\dom  h$ to be a local minimum of \eqref{eq:PenProb2Intro} is that
$z^*$ be a stationary point of $g+h$, i.e.,  $0 \in \nabla g(z^*)+\partial h(z^*)$.

The latter inclusion motivates the following notion of approximate solution for problem \eqref{eq:PenProb2Intro}:
for a given tolerance $\hat \rho>0$,  a pair $( \hat{z},\hat{v})$ is called a $\hat\rho$-approximate solution of \eqref{eq:PenProb2Intro} if
\begin{equation} \label{eq:ref4'''}
\hat {v} \in \nabla g(\hat{z}) + \partial h (\hat{z}), \quad \|\hat{v}\| \le  \hat \rho.
\end{equation}
Another notion of approximate solution that naturally arises in our analysis of the general framework of  Subsection~\ref{subsec:GIPP} is as follows.
For a given  tolerance pair $(\bar \rho, \bar \varepsilon) \in \Re^2_{++}$,  a quintuple $(\lambda,z^-, z, w,\varepsilon) \in \Re_{++}\times\Re^n \times  \Re^n \times \Re^n \times \Re_+$ is called a $(\bar \rho, \bar \varepsilon)$-prox-approximate solution 
of \eqref{eq:PenProb2Intro} if
\begin{equation} \label{eq:ref4'}
w \in \partial_{ \varepsilon}\left( \phi + \frac{1}{2\lam} \|\cdot - z^- \|^2 \right)  (z),
\quad \left\|\frac{1}\lam (z^--z) + w \right\| \le  \bar \rho, \quad \varepsilon \le  \bar \varepsilon.
\end{equation}

Note that the first definition of approximate solution above depends on the composite structure $(g,h)$ of $\phi$ but the second one does not.

The next proposition, whose proof is presented in Appendix~\ref{app:approxsollemma}, shows how an approximate solution as in \eqref{eq:ref4'''}
can be obtained from a prox-approximate solution by performing a composite gradient step.

\begin{proposition}\label{prop:refapproxsol}
Let $h \in \bConv{n}$ 
and $g$ be a differentiable function on $\dom h$ whose gradient  satisfies the second inequality in \eqref{ineq:concavity_g}.
Let $(\bar \rho,\bar \varepsilon) \in \Re^2_{++}$ and a  $(\bar \rho,\bar \varepsilon)$-prox-approximate solution 
$(\lambda,z^-,z, w,\varepsilon)$ be given  
%and $\rho>0$ be such that \textcolor{red}{Should we cite (13) instead of the next relations???}
%\begin{equation} \label{eq:incl-prroxz}
%w \in \partial_\varepsilon \left( g + h + \frac{1}{2\lam} \|\cdot-z^-\|^2 \right)(z),
%\quad \left \| \frac{1}\lam (z^--z) + w \right\| \le \rho
%\end{equation}
 and   define
%\begin{align} \label{eq:def_ff'z}
%&L= M+\lam^{-1}, \quad (z_g,q_g,\delta_g) = (z(z;g), q(z,g), \delta(z;g)),
%\end{align}
%where the quantities
\begin{align}\label{eq:def_zg} 
z_g &:= \argmin_u \left\{ \ell_g(u; z) + h(u) + \frac{M+\lambda^{-1}}2 \|u-z\|^2  \right \},\\
q_g&:=[M+\lambda^{-1}](z-z_g),\label{eq:def_qg}\\
\delta_g& := h(z) - h(z_g) - \inner{q_g - \nabla g(z) }{z-z_g}, \label{eq:def_deltag} \\
v_g &:=  q_g+\nabla g(z_g)  - \nabla g(z).\label{eq:def_vg} 
\end{align}
Then, the following statements hold:
\begin{itemize}
\item[(a)] $q_g \in \nabla g(z) + \partial  h(z_g)$ and
\[
\left( M+\lam^{-1} \right) \|z-z_g\| = \|q_g\|  \le \bar{\rho} + \sqrt{2\bar{\varepsilon} (M+\lam^{-1}) };
\]
\item[(b)] $\delta_g \ge 0$, $q_g \in \nabla g(z) + \partial_{\delta_g}  h(z)$ and
\[
\|q_g\|^2 + 2 (M+\lam^{-1}) \delta_g  \le  \left  [ \bar\rho + \sqrt{2\bar\varepsilon (M+\lam^{-1}) } \right ]^2;
\]
\item[(c)] if $\nabla g$ is
$M$-Lipschitz continuous on $\dom h$, then 
\[
v_g \in \nabla g(z_g) + \partial  h(z_g), \quad \|v_g\| \le 2 \|q_g\| \le 2 \left  [ \bar\rho + \sqrt{2\bar{\varepsilon} (M+\lam^{-1}) } \right ].
\]
\end{itemize}
%\begin{equation}\label{inclusion_vg}
%v_g \in \nabla g(z_g) + \partial  h(z_g), \quad \|v_g\| \le 2\left  [ \rho + \sqrt{2\varepsilon (M+\lam^{-1}) } 
%\right ].
%\end{equation}
\end{proposition}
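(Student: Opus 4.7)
The plan is to organize everything around the $\mu$-strongly convex ``prox-surrogate''
\[
F(u) := \ell_g(u;z) + h(u) + \frac{\mu}{2}\|u-z\|^2, \qquad \mu := M + \lambda^{-1},
\]
whose unique minimizer is exactly $z_g$. The first-order optimality condition for $z_g$ gives $q_g - \nabla g(z) \in \partial h(z_g)$, which immediately yields the inclusion in (a), and (via the subgradient inequality for $h$ at $z_g$ evaluated at $u = z$) the nonnegativity $\delta_g \ge 0$; the inclusion $q_g \in \nabla g(z) + \partial_{\delta_g} h(z)$ of (b) then follows from Proposition~\ref{prop:transpForm} applied to $\psi = h$. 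A direct substitution also gives the identity
\[
F(z) - F(z_g) = \delta_g + \frac{\mu}{2}\|z - z_g\|^2 = \delta_g + \frac{\|q_g\|^2}{2\mu},
\]
so both norm bounds reduce to estimating $F(z) - F(z_g)$ from above.

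The first substantive step is to transfer the $\varepsilon$-subgradient hypothesis to $F$. Using only the upper bound $\ell_g(u;z) - g(u) \ge -(M/2)\|u-z\|^2$ (the second half of \eqref{ineq:concavity_g}) together with the definition $G(u) := \phi(u) + (1/(2\lambda))\|u-z^-\|^2$, a routine expansion of $\|u-z^-\|^2$ will show
\[
F(u) - F(z) \ge [G(u) - G(z)] + \lambda^{-1}\inner{u-z}{z^- - z} \qquad \forall u \in \Re^n.
\]
Combined with $w \in \partial_\varepsilon G(z)$, this proves that $r := w + \lambda^{-1}(z^- - z) \in \partial_\varepsilon F(z)$, where by hypothesis $\|r\| \le \bar\rho$ and $\varepsilon \le \bar\varepsilon$.

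The main step -- and the hardest part -- is the sharp estimate $F(z) - F(z_g) \le (\|r\| + \sqrt{2\mu\varepsilon})^2/(2\mu)$. A naive application of the $\varepsilon$-subgradient inequality at $u = z_g$ only delivers $\|q_g\| \le \|r\| + \sqrt{\|r\|^2 + 2\mu\varepsilon}$, which is off by up to a factor of $2$. The sharp constant is recovered by introducing the auxiliary point $u^* := \argmin_u\{F(u) - \inner{r}{u}\}$, which exists uniquely by strong convexity and satisfies $r \in \partial F(u^*)$. Combining the $\varepsilon$-subgradient inequality for $F$ at $z$ evaluated at $u^*$ with the strong-convexity inequality for $F$ at $u^*$ evaluated at $z$ produces the ``lift'' $(\mu/2)\|z - u^*\|^2 \le \varepsilon$; applying strong convexity to the exact subgradient pairs $(r,u^*)$ and $(0,z_g)$ gives both $\mu\|u^* - z_g\| \le \|r\|$ and $F(u^*) - F(z_g) \le \|r\|^2/(2\mu)$. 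Bounding $F(z) - F(u^*) \le \inner{r}{z-u^*} + \varepsilon \le \|r\|\sqrt{2\varepsilon/\mu} + \varepsilon$ and summing then yields exactly $(\|r\| + \sqrt{2\mu\varepsilon})^2/(2\mu)$; substituting into the identity above proves (b), and dropping the nonnegative term $2\mu\delta_g$ gives (a). Finally (c) is immediate from the Lipschitz hypothesis: $v_g \in \nabla g(z_g) + \partial h(z_g)$ by adding $\nabla g(z_g) - \nabla g(z)$ to the inclusion in (a), and $\|v_g\| \le \|q_g\| + M\|z_g - z\| = \|q_g\|(1 + M/\mu) \le 2\|q_g\|$ since $M \le \mu$.
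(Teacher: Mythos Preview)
Your argument is correct and the sharp constant is achieved. The route, however, differs meaningfully from the paper's.

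The paper organizes the proof around two auxiliary lemmas. In Lemma~\ref{lem:approxsolreps} it establishes the \emph{argmin characterization}
\[
(q_g,\delta_g)=\argmin_{(r,\varepsilon)}\left\{\tfrac{1}{2(M+\lambda^{-1})}\|r\|^2+\varepsilon : r\in\nabla g(z)+\partial_\varepsilon h(z)\right\},
\]
and in Lemma~\ref{lem:approxsolff} it performs a \emph{second} composite step with the shifted function $f(\cdot)=g(\cdot)+\tfrac{1}{2\lambda}\|\cdot-z^-\|^2-\langle w,\cdot\rangle$ to produce a specific feasible pair $(v,\delta_f)$ satisfying $\|v\|^2+2(M+\lambda^{-1})\delta_f\le[\bar\rho+\sqrt{2(M+\lambda^{-1})\bar\varepsilon}]^2$. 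The inequality in~(b) then follows by comparing $(q_g,\delta_g)$ to $(v,\delta_f)$ via the argmin property. Part~(c) is handled the same way you do it.

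You bypass both lemmas: after transferring the $\varepsilon$-subgradient from $G$ to $F$ (which corresponds to the paper's observation that the inclusion in \eqref{eq:ref4'} is equivalent to $0\in\partial_\varepsilon(f+h)(z)$), you estimate $F(z)-F(z_g)$ directly through the auxiliary point $u^*=\argmin\{F-\langle r,\cdot\rangle\}$ and $\mu$-strong convexity. This is more self-contained for this proposition and avoids introducing the second prox step $z_f$ altogether. The paper's modular decomposition, on the other hand, isolates the argmin property \eqref{argmin_qfdeltaf} and the transfer lemma as reusable tools; in particular Lemma~\ref{lem:approxsolff} yields an alternative refinement $(z_f,v_f)$ that the paper references later (see the sentence following Proposition~\ref{prop:refapproxsol}). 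Both approaches deliver the same sharp bound.
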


Proposition \ref{prop:refapproxsol} shows that a prox-approximate solution yields three possible ways of measuring the quality of an approximate solution of \eqref{eq:PenProb2Intro}.
Note that the ones described in (a) and (b) do not assume $\nabla g$ to be Lipschitz continuous while the one in (c) does. This paper
only derives complexity results with respect to prox-approximate solutions and approximate solutions as in (c)
but we remark that complexity results for the ones in (a) or (b) can also be obtained.
Finally, we note that Lemma \ref{lem:approxsolff} in Appendix~\ref{app:approxsollemma} provides an alternative way of constructing approximate solutions as in (a), (b) or (c)
from a given prox-approximate solution.

%We now make a few remarks about the above result. First, the two ways of measuring the quality of an approximate solution given in (a)
%and (b), although we do not use them in this paper, have the property that they do not require $\nabla g$ to be Lipschitz continuous.
%
%
%??????The above result assumes that the inclusion in \eqref{eq:ref4'} holds, or equivalently, that $f(u) \ge f(z) - \varepsilon$ for every $u$ in view of
%the definition of $f$ in \eqref{eq:def_ff}. However, a close of its proof shows that the latter inequality is used only for $u=z_p$.
%Hence, the above result would also hold for variants of the GIPP framework where the inclusion in \eqref{eq:ref4'} is replaced to checking to
%the inequality $f(u) \ge f(z) - \varepsilon$ on a finite number of $u$'s, including the point $z_p$.?????

%%%%%%%%%%%%%%%%%%%%%%%%%%%%%%%%%%%%%%%%%%%%
%%%%%%%%%%%%%%%%%%%%%%%%%%%%%%%%%%%%%%%%%%%%

\subsection{A general inexact proximal point framework}\label{subsec:GIPP}

This subsection introduces a general inexact  proximal point (GIPP) framework for solving the CNO problem \eqref{eq:PenProb2Intro}.

Although our main goal is to use the GIPP framework in the context of the CNO problem, we will describe it in
the context of the following more general problem
\begin{equation}\label{prob1_NC}
\phi_* := \inf \{ \phi(z) : z \in \Re^n \}
\end{equation}
where $\phi:\Re^{n}\to(-\infty,\infty]$ is a proper lower semi-continuous function, and $\phi_* > -\infty$. 

We now state the GIPP framework for computing prox-approximate solutions of \eqref{prob1_NC}.
%in a certain sense.
\\
\noindent\rule[0.5ex]{1\columnwidth}{1pt}

GIPP Framework

\noindent\rule[0.5ex]{1\columnwidth}{1pt}

\begin{itemize}
\item [(0)] Let $\sigma \in (0,1)$ and $z_0 \in \dom  \phi $ be given, and set $k=1$; 
\item [(1)] find a quadruple $(\lambda_k,z_k,\tilde v_k,\tilde \varepsilon_k) \in \Re_{++}\times\Re^n \times \Re^n \times \Re_+$ satisfying
\begin{align}\label{inclusion:GIPPF}
& \tilde v_k \in \partial_{\tilde \varepsilon_k} \left ( \lambda_k\phi + \frac1{2} \| \cdot-z_{k-1} \|^2 \right)(z_k), \\
\label{eq:errocritGIPP}
& \|\tilde v_k\|^{2}+2\tilde \varepsilon_k\leq \sigma\|z_{k-1}-z_k+\tilde v_k\|^{2};
\end{align}
\item [(2)] set $k \leftarrow k+1$ and go to (1).
\end{itemize}
\rule[0.5ex]{1\columnwidth}{1pt}

%---------------------Difference between GIPP and HPE framework ---------------------
%
%OURS:
%\[
%\| \tv \|^2 + 2 \tilde \varepsilon \le \sigma \|x-x_0+\tv\|^2,  \quad \tv \in \partial_{ \tilde \varepsilon} \left( \lam f + \frac12 \|\cdot-x_0\|^2 \right) (x) \quad (*)
%\]
%
%HPE:
%\[
%\| \tw + y - x_0 \|^2 + 2 \tilde \varepsilon \le \sigma' \|y-x_0\|^2, \quad x=x_0- \tw, \quad \tw \in \partial_{ \tilde \varepsilon} (\lam f) (y)
%\]
%
%Assume below that $f$ is convex.
%
%1) We first show that HPE with $y=x$ is a special case of OURS. Indeed, assume $y=x$ in the HPE. Then,
%\[
% 2 \tilde \varepsilon \le \sigma' \|x-x_0\|^2, \quad \tw = x_0 - x \in \partial_{ \tilde \varepsilon} (\lam f) (x)
%\]
%Now, define
%\[
%\tv = \tw + x-x_0 = 0, \quad \sigma = \sigma'
%\]
%Then, $\tv$ satisfies the inclusion in (*). Also,
%\[
%\| \tv \|^2 + 2 \tilde \varepsilon =  2 \tilde \varepsilon \le  \sigma' \|x-x_0\|^2  = \sigma \|x-x_0+\tv\|^2
%\]
%
%2) Outline for the reverse. The inclusion implies that there exists $\delta \in \R^n$ such that
%\[
%\tv + \delta \in \partial_{ \tilde \varepsilon} (\lam f) (x) + x- x_0, \quad \|\delta\| \le \sqrt{2\varepsilon}
%\]
%or
%\[
%\tw  \in \partial_{ \tilde \varepsilon} (\lam f) (x), \quad \|\delta\| \le \sqrt{2\varepsilon}
%\]
%where
%\[
%\tw= \tv+ x_0-x + \delta
%\]
%Also,
%\[
%????
%\]
%
%------------------------------------------

Observe that GIPP framework is not a well-specified algorithm but rather a conceptual framework consisting of (possibly many) specific instances.
In particular, it does not specify how the quadruple $(\lambda_k,z_k,\tilde v_k,\tilde \varepsilon_k)$ is computed and whether it exists. These two issues
will depend on the specific instance under consideration and the properties assumed about problem \eqref{prob1_NC}.
In this paper, we will discuss two specific instances of the above GIPP framework for solving
\eqref{eq:PenProb2Intro}, namely, the composite gradient method briefly discussed at  the end
of this subsection and an accelerated proximal method presented in Subsection~\ref{sec:AIPPmet}.
In both of these instances, the sequences $\{\tv_k\}$ and $\{\tilde \varepsilon_k\}$ are non-trivial
(see Proposition~\ref{prop:gradient method} and Lemma~\ref{prop:AIPPmethod}(c)).

Let $\{(\lambda_k,z_k,\tilde v_k,\tilde \varepsilon_k)\}$ be the sequence generated by an instance of the GIPP framework
and consider the sequences $\{(r_k, v_k,\varepsilon_k)\}$ defined as
\begin{equation}\label{def:rveps}
(v_k,\varepsilon_k) := \frac{1}{\lambda_k} (\tilde v_k,\tilde \varepsilon_k), \quad
r_k := \frac{z_{k-1}-z_k}{\lambda_k}.
\end{equation}
Then, it follows from \eqref{inclusion:GIPPF} that the quintuple
$(\lambda,\hat z, z, v,\varepsilon)=(\lambda_k,z_{k-1},z_k,v_k,\varepsilon_k)$ satisfies the inclusion in \eqref{eq:ref4'} for every $k \ge 1$.
In what follows, we will derive the iteration complexity for the
quintuple $(\lambda_k,z_{k-1},z_k,v_k,\varepsilon_k)$ to satisfy: i) the first inequality in \eqref{eq:ref4'} only,
namely, $\|v_k+r_k\| \le \bar \rho$; and
ii) both inequalities in \eqref{eq:ref4'}, namely, $\|v_k+r_k\| \le \bar \rho$ and
$\varepsilon_k \le \bar \varepsilon$, and hence a $(\bar \rho,\bar \varepsilon)$-prox-approximate
solution of \eqref{eq:PenProb2Intro}.

Without necessarily assuming that the error condition \eqref{eq:errocritGIPP} holds, the following technical but straightforward result
derives bounds on
$\tilde \varepsilon_k$ and $\|\tilde v_k+z_{k-1}-z_k\|$
in terms of the  %which are easily seen to imply corresponding bounds for $\|z_{k-1}-z_k\|$ and $\|\tilde v_k\|$.
%Towards this goal, we  start by introducing the following
quantities
\begin{equation}\label{def:rk&Lambak&deltak}
\delta_k=\delta_k(\sigma):= \frac{1}{\lambda_k}\max\left\{0,\|\tilde v_k\|^{2}+2\tilde \varepsilon_k-\sigma\|z_{k-1}-z_k+\tilde v_k\|^{2}\right\},\quad\Lambda_k:=\sum_{i=1}^k\lambda_i
\end{equation}
where $\sigma \in [0,1)$ is a given parameter. Note that if \eqref{eq:errocritGIPP} is assumed then $\delta_k=0$.
\begin{lemma}\label{LemGIPP1}
Assume that the sequence $\{(\lambda_k,z_k,\tilde v_k,\tilde \varepsilon_k)\}$ satisfies \eqref{inclusion:GIPPF}
and let $\sigma \in (0,1)$ be given. Then, for every $k\geq 1$, there holds
\begin{equation}\label{eq:pre-complex}
 \frac{1}{\sigma\lambda_k}\left(\|\tilde v_k\|^2+2\tilde{\varepsilon}_k-\lambda_k\delta_k\right)\leq \frac{1}{\lambda_k}\|z_{k-1}-z_k+\tilde v_k\|^{2}\leq \frac{2[\phi(z_{k-1})-\phi(z_k)]+\delta_k}{1-\sigma}
\end{equation}
where $\delta_k$ is as in \eqref{def:rk&Lambak&deltak}.   
%As a consequence, for every $k\geq 1$, there exists $i\leq k$ such that
%\begin{equation}\label{eq:GIPPF_complex}
%\frac{1}{\lambda_i^2}\|z_{i-1}-z_i+\tilde v_i\|^2\leq
%\frac{2\Delta^0_\phi+\sum_{i=1}^{k}\delta_{i}}{(1-\sigma)\Lambda_k}
%\end{equation}
%where $\Lambda_k$ is as in \eqref{def:rk&Lambak&deltak} and
%\begin{equation} \label{eq:Delta_0}
%\Delta^0_\phi:=\phi(z_0)-\phi_*.
%\end{equation}
\end{lemma}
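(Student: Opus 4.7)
The plan is to prove the two inequalities separately, relying only on the inclusion \eqref{inclusion:GIPPF}, the definition of $\delta_k$ in \eqref{def:rk&Lambak&deltak}, and the definition of the $\varepsilon$-subdifferential in \eqref{eq:epsubdiff}.

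The first inequality is essentially tautological from the way $\delta_k$ is defined. Indeed, by \eqref{def:rk&Lambak&deltak} we have $\lambda_k \delta_k \ge \|\tilde v_k\|^2 + 2\tilde\varepsilon_k - \sigma\|z_{k-1}-z_k+\tilde v_k\|^2$, so rearranging and dividing by $\sigma\lambda_k$ yields the left-most inequality of \eqref{eq:pre-complex}. No use of the inclusion or of the function $\phi$ is needed here.

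The second inequality is the substantive one, and I would obtain it by plugging $u = z_{k-1}$ into the $\tilde\varepsilon_k$-subdifferential inclusion
\[
\lambda_k\phi(u) + \tfrac{1}{2}\|u-z_{k-1}\|^2 \;\ge\; \lambda_k\phi(z_k) + \tfrac{1}{2}\|z_k-z_{k-1}\|^2 + \inner{\tilde v_k}{u-z_k} - \tilde\varepsilon_k
\]
that comes from \eqref{inclusion:GIPPF} and \eqref{eq:epsubdiff}. This gives
\[
2\lambda_k[\phi(z_{k-1})-\phi(z_k)] \;\ge\; \|z_k-z_{k-1}\|^2 + 2\inner{\tilde v_k}{z_{k-1}-z_k} - 2\tilde\varepsilon_k.
\]
Then I would use the algebraic identity
\[
2\inner{\tilde v_k}{z_{k-1}-z_k} \;=\; \|z_{k-1}-z_k+\tilde v_k\|^2 - \|z_{k-1}-z_k\|^2 - \|\tilde v_k\|^2
\]
to cancel the $\|z_k-z_{k-1}\|^2$ term and obtain
\[
2\lambda_k[\phi(z_{k-1})-\phi(z_k)] \;\ge\; \|z_{k-1}-z_k+\tilde v_k\|^2 - \|\tilde v_k\|^2 - 2\tilde\varepsilon_k.
\]
Finally, the defining bound $\|\tilde v_k\|^2 + 2\tilde\varepsilon_k \le \sigma\|z_{k-1}-z_k+\tilde v_k\|^2 + \lambda_k\delta_k$ from \eqref{def:rk&Lambak&deltak} turns the right-hand side into $(1-\sigma)\|z_{k-1}-z_k+\tilde v_k\|^2 - \lambda_k\delta_k$, and dividing by $(1-\sigma)\lambda_k$ yields the claimed bound.

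There is no real obstacle here; the argument is a direct chain of rewrites. The only care needed is keeping track of the factor of $\lambda_k$ coming from the scaled function $\lambda_k\phi + \tfrac12\|\cdot-z_{k-1}\|^2$ in the inclusion, and using the algebraic identity for $2\inner{\tilde v_k}{z_{k-1}-z_k}$ so that the squared-distance term introduced by the subdifferential inequality cancels cleanly.
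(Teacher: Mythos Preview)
Your proposal is correct and follows essentially the same approach as the paper: both set $u=z_{k-1}$ in the $\tilde\varepsilon_k$-subdifferential inequality, apply the polarization identity $2\inner{\tilde v_k}{z_{k-1}-z_k}=\|z_{k-1}-z_k+\tilde v_k\|^2-\|z_{k-1}-z_k\|^2-\|\tilde v_k\|^2$, and then invoke the defining bound for $\delta_k$ to obtain the second inequality, while noting that the first inequality is an immediate rearrangement of the definition of $\delta_k$.
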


\begin{proof}
First note that the  inclusion in \eqref{inclusion:GIPPF} is equivalent to 
\[
\lambda_i\phi(z)+\frac{1}{2}\|z-z_{i-1}\|^{2}\ge\lambda_i\phi(z_i)+\frac{1}{2}\|z_i-z_{i-1}\|^{2}+\left\langle \tilde v_i,z-z_i\right\rangle -\tilde \varepsilon_i\qquad\forall z\in\Re^{n}.
\]
Setting $z=z_{i-1}$ in the above inequality and using the definition
of $\delta_i$ given in \eqref{def:rk&Lambak&deltak}, we obtain 
\begin{align*}
&\lambda_i(\phi(z_{i-1})-\phi(z_i))  \ge\frac{1}{2}\left(\|z_{i-1}-z_i\|^{2}+2\left\langle \tilde v_i,z_{i-1}-z_i\right\rangle -2\tilde \varepsilon_i\right)\\
 & =\frac{1}{2}\left[\|z_{i-1}-z_i+\tilde v_i\|^{2}-\|\tilde v_i\|^{2}-2\tilde \varepsilon_i\right] \geq\frac{1}{2}\left[(1-\sigma)\|z_{i-1}-z_i+\tilde v_i\|^{2}-\lambda_i\delta_i\right] 
\end{align*}
and hence the proof  of  the second inequality in \eqref{eq:pre-complex} follows after simple rearrangements. 
The first inequality in \eqref{eq:pre-complex} follows immediately from \eqref{def:rk&Lambak&deltak}.
%Now, summing both sides of the second inequality in \eqref{eq:pre-complex} from $i=1$ to $i=k$ we obtain 
%\begin{equation}\label{eq:achieve_min}
%\Lambda_k \left(\min_{i=1,\ldots,k}\frac{1}{\lambda_i^2}\|z_{i-1}-z_i+\tilde v_i\|\right)\leq \frac{2(\phi(z_{0})-\phi(z_k))+\sum_{i=1}^{k}\delta_{i}}{1-\sigma}.
%\end{equation}
%Clearly, the index $i \le k$ which achieves the above minimum satisfies \eqref{eq:GIPPF_complex}.
\end{proof}

%%%%%%%%%%%%%%%%%%%%%%%%%%%%%%%%%%%%%%%%
%%%%%%%%%%%%%%%%%%%%%%%%%%%%%%%%%%%%%%%%
\vgap
\begin{lemma}\label{lem:phikd0} Let  $\{(\lambda_k,z_k,\tilde v_k,\tilde \varepsilon_k)\}$ be generated by an instance of the GIPP framework.  
Then, for every $u\in \Re^n$, there holds
\[
  \phi(z_k)\leq \phi(u) + \frac{1}{2(1-\sigma) \lambda_k} \|z_{k-1}-u\|^2,\quad \forall k \geq 1.
\]
\end{lemma}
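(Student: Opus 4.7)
The plan is to unfold the $\varepsilon$-subdifferential inclusion in step (1) of the GIPP framework at an arbitrary test point $u \in \Re^n$ and then absorb the residual error terms using \eqref{eq:errocritGIPP} and Young's inequality. Applying definition \eqref{eq:epsubdiff} to the inclusion \eqref{inclusion:GIPPF} with test point $u$ and isolating $\lambda_k\phi(z_k)$ yields
\[
\lambda_k \phi(z_k) \leq \lambda_k \phi(u) + \frac{1}{2}\|u - z_{k-1}\|^2 - \frac{1}{2}\|z_k - z_{k-1}\|^2 + \langle \tilde v_k, z_k - u\rangle + \tilde \varepsilon_k.
\]

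The key move is to split $z_k - u = (z_k - z_{k-1}) + (z_{k-1} - u)$ so that one piece combines with $-\frac{1}{2}\|z_k - z_{k-1}\|^2$ via the identity $-\frac{1}{2}\|a\|^2 + \langle b, a\rangle = \frac{1}{2}\|b\|^2 - \frac{1}{2}\|a-b\|^2$ (taking $a = z_k - z_{k-1}$, $b = \tilde v_k$). Using $\|z_k - z_{k-1} - \tilde v_k\| = \|z_{k-1} - z_k + \tilde v_k\|$ and applying \eqref{eq:errocritGIPP} in the form $\frac{1}{2}\|\tilde v_k\|^2 + \tilde \varepsilon_k \leq \frac{\sigma}{2}\|z_{k-1} - z_k + \tilde v_k\|^2$ then collapses the display into
\[
\lambda_k \phi(z_k) \leq \lambda_k \phi(u) + \frac{1}{2}\|u - z_{k-1}\|^2 - \frac{1-\sigma}{2}\|z_{k-1} - z_k + \tilde v_k\|^2 + \langle \tilde v_k, z_{k-1} - u\rangle.
\]

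The residual inner product is dispatched by Young's inequality, $\langle \tilde v_k, z_{k-1} - u\rangle \leq (\alpha/2)\|\tilde v_k\|^2 + (1/(2\alpha))\|z_{k-1} - u\|^2$, combined with a second use of \eqref{eq:errocritGIPP} in the weaker form $\|\tilde v_k\|^2 \leq \sigma\|z_{k-1} - z_k + \tilde v_k\|^2$ (obtained by dropping $2\tilde\varepsilon_k \ge 0$). The calibration $\alpha = (1-\sigma)/\sigma$ is forced: it makes $\alpha\sigma/2 = (1-\sigma)/2$, so the $\|z_{k-1} - z_k + \tilde v_k\|^2$ contributions cancel exactly, leaving $\|z_{k-1} - u\|^2$ with coefficient $\frac{1}{2} + \frac{\sigma}{2(1-\sigma)} = \frac{1}{2(1-\sigma)}$. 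Dividing by $\lambda_k > 0$ produces the claim. The main obstacle is spotting that the error criterion must be invoked twice in complementary ways and pinning down the matching Young's parameter; once $\alpha = (1-\sigma)/\sigma$ is identified, the remaining algebra is routine.
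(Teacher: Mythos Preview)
Your proof is correct and follows essentially the same route as the paper's: unfold the $\varepsilon$-subdifferential at $u$, split $\langle \tilde v_k, z_k-u\rangle$ through $z_{k-1}$, absorb the local terms via \eqref{eq:errocritGIPP}, and kill the remaining cross term with Young's inequality calibrated at $\alpha=(1-\sigma)/\sigma$. The only cosmetic difference is that the paper first rewrites \eqref{eq:errocritGIPP} as the single inequality $\langle \tilde v_k,z_k-z_{k-1}\rangle+\sigma^{-1}\tilde\varepsilon_k-\tfrac12\|z_{k-1}-z_k\|^2\le -\tfrac{1-\sigma}{2\sigma}\|\tilde v_k\|^2$ and then applies Young once, whereas you complete the square to produce $\|z_{k-1}-z_k+\tilde v_k\|^2$ and invoke \eqref{eq:errocritGIPP} twice; the two arrangements are equivalent.
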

\begin{proof}
Using a simple algebraic manipulation, it is easy to see that \eqref{eq:errocritGIPP} yields
\begin{equation}\label{ineq:epstilde2}
\inner{\tilde v_k}{z_k- z_{k-1}} + \frac{1}\sigma \tilde \varepsilon_k - \frac12 \|z_{k-1}-z_k\|^2 \le -\frac{1-\sigma}{2\sigma}\| \tilde v_k\|^{2}.
\end{equation}
Now, letting $\theta := (1-\sigma)/\sigma>0$,  recalling definition \eqref{eq:epsubdiff}, using \eqref{inclusion:GIPPF} and \eqref{ineq:epstilde2},
and the fact that $\langle v,v'\rangle \leq (\theta/2)\|v\|^2+(1/2\theta)\|v'\|^2$ for all $v,v'\in \Re^n$,
we conclude that
%Now note that the  inclusion in \eqref{inclusion:GIPPF}   is equivalent to
%\[
%\lambda_k \phi(u)+\frac{1}{2}\|u-z_{k-1}\|^{2}\ge\lambda_k\phi(z_k)+\frac{1}{2}\|z_k- z_{k-1}\|^{2}+\left\langle \tilde v_k,u-z_k\right\rangle -\tilde \varepsilon_k \qquad\forall u\in\Re^{n}.
%\]
%Hence taking into account  \eqref{ineq:epstilde2} with the fact that $\sigma < 1$, we obtain
\begin{align*}
 \lambda_k[\phi(z_k)-\phi(u)] &\le \frac{1}{2}\|z_{k-1}-u\|^{2}  +
 \left\langle \tilde v_k,z_k - u\right\rangle  + \tilde \varepsilon_k - \frac{1}{2}\|z_k-z_{k-1}\|^{2}  \\
& \leq \frac{1}{2}\|z_{k-1}-u\|^{2} + 
 \left\langle \tilde v_k,z_{k-1}-u  \right\rangle    -\frac{1-\sigma}{2\sigma}\| \tilde v_k\|^{2} \\
& \le \frac{1}{2}\|z_{k-1}-u\|^{2} +\left( \frac{\theta}2 \|\tilde v_k\|^2 + \frac{1}{2\theta} \|z_{k-1}-u\|^{2}   \right) -   \frac{1-\sigma}{2\sigma} \| \tilde v_k\|^{2}
%& = \frac{1}{2}\left(1+\frac{1}{\theta}\right)\|z_{k-1}-u\|^{2} +
%\frac12 \left( \theta-\frac{1-\sigma}{\sigma} \right)\|\tilde v_k\|^2  
\end{align*}
%where  the third inequality is due to
%the fact that $\langle v,v'\rangle \leq (\theta/2)\|v\|^2+(1/2\theta)\|v'\|^2$ for all $v,v'\in \Re^n$.
and hence that the conclusion of the lemma holds due to the definition of $\theta$.
%Hence the result follows by using $\theta=(1-\sigma)/\sigma$.
\end{proof}

%\[
%R(u; \psi) = \frac12 \|z_0-u\|^2 + \psi(u) - \psi^*
%\]
%Define
%\[
%\tilde \psi (u) =  \frac12 \|z_0-u\|^2 + \psi(u)
%\]
%and assume that $\tilde \psi$ is $\kappa$-strongly convex.
%Then,
%\[
%R_S(\psi) = R(u;\psi) = \tilde \psi(u)  - \tilde \psi(z^*) + \frac12 \|z_0-z^*\|^2 \le - \frac{\kappa}2 \| u-z^*\|^2 + \frac12 \|z_0-z^*\|^2
%\]
%\[
%R(u; \phi) = \frac12 \|z_0-u\|^2 + (1-\sigma) \lam_1 [\phi(u) - \phi^*]
%\]d
%\[
%R_S(\psi) = \inf \{ R(u;\psi) : u \in S \}
%\]
%\[
%R(u;(1-\sigma) \lam_1 \phi) = R(u;\phi_\sigma^1)
%\]
Let $z_0\in \Re^n, \sigma\in (0,1)$, and $ \lambda \ge 0$ be given and consider the following quantity 
\begin{equation}\label{eq:def_Rphi}
R(\phi;\lambda) := \inf \left \{ R(u;\phi,\lambda) :=   \frac12 \|z_0-u\|^2 + (1-\sigma) \lam [\phi(u) - \phi_*] :  u \in \Re^n\right\}
\end{equation}
where $\phi_*$ is as in \eqref{prob1_NC}. Clearly, $R(u;\phi,\lambda) \in \Re_+$ for all $u \in \dom h$ and  $R(\phi;\lambda) \in \Re_+$.

\begin{proposition}
\label{corGIPP1}
Let  $\{(\lambda_k,z_k,\tilde v_k,\tilde \varepsilon_k)\}$ be
generated by an instance of the GIPP framework. Then, the following statements hold:
\begin{itemize}
\item[(a)]  for every $k\geq 1$,
\begin{equation} \label{eqLemaux:rasc}
\frac{1-\sigma}{2\lambda_k}\left \|z_{k-1}-z_k+\tilde v_k\right\|^{2}\leq \phi(z_{k-1})-\phi(z_k);
\end{equation}
\item[(b)]
for every $k>1$, there exists $i\leq k$ such that
\begin{equation}\label{eq:corGIPP_complexaa}
\frac{1}{\lambda_i^2}\|z_{i-1}-z_i+\tilde v_i\|^2\leq \frac{2 R(\phi;\lambda_1) }{(1-\sigma)^2 \lam_1 (\Lambda_k-\lambda_1)}
\end{equation}
%\begin{equation}\label{eq:corGIPP_complexaa}
%\frac{1}{\lambda_i^2}\|z_{i-1}-z_i+\tilde v_i\|^2\leq
%\min\left\{\frac{2\Delta^0_\phi}{(1-\sigma)\Lambda_k},\frac{ d_0^2}{\lambda_1(\Lambda_k-\lambda_1)(1-\sigma)^2 }
%\right\}
%\end{equation}
where  $\Lambda_k$ and $R(\cdot;\cdot)$ are   as in  \eqref{def:rk&Lambak&deltak} and \eqref{eq:def_Rphi}, respectively.
%, \eqref{eq:Delta_0} and \eqref{eq:dist0}, respectively.
%\item[(c)] for any $\bar \rho>0$, there exists an index $i$ such that $\|v_i\|\leq\|r_i+v_i\|\leq \bar \rho$ and
%\begin{equation}\label{eq:complexCor23}
%i=\mathcal{O}\left(\frac{\Delta^0_\phi}{\lambda\bar\rho^2}\right);
%\end{equation}
%\item[(d)] for any $(\bar \rho,\bar \varepsilon) \in \Re^2_{++}$, there exists an index $i$ such that $\|r_i+v_i\| \le \bar \rho$,
%$\varepsilon_i \leq \bar \varepsilon$ and
 %\begin{equation}
 %i = \mathcal{O}\left( \max \left\{ \frac{\Delta^0_\phi}{\lambda\bar\rho^2} , \frac{\Delta^0_\phi}{\bar\varepsilon} \right\} \right). \label{eq:complexCor23'}
 %\end{equation}
\end{itemize}
\end{proposition}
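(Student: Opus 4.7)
\textbf{Proof plan for Proposition \ref{corGIPP1}.}

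For part (a), the plan is to simply invoke Lemma \ref{LemGIPP1}. Since the GIPP framework imposes the error criterion \eqref{eq:errocritGIPP}, the quantity $\delta_k$ defined in \eqref{def:rk&Lambak&deltak} vanishes, and the second inequality in \eqref{eq:pre-complex} immediately reduces, after multiplying by $(1-\sigma)/2$, to the claimed estimate \eqref{eqLemaux:rasc}.

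For part (b), the strategy is a standard weighted-average (telescoping) argument. First I would sum the inequality in (a) from $i=2$ to $k$. The right-hand side telescopes to $\phi(z_1)-\phi(z_k) \le \phi(z_1)-\phi_*$, so
\begin{equation*}
\sum_{i=2}^k \frac{1-\sigma}{2\lambda_i}\|z_{i-1}-z_i+\tilde v_i\|^2 \le \phi(z_1) - \phi_*.
\end{equation*}

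Next I would bound $\phi(z_1)-\phi_*$ in terms of $R(\phi;\lambda_1)$. Applying Lemma \ref{lem:phikd0} with $k=1$ gives $\phi(z_1) \le \phi(u) + \|z_0-u\|^2/(2(1-\sigma)\lambda_1)$ for every $u\in\Re^n$. Subtracting $\phi_*$ from both sides, multiplying by $(1-\sigma)\lambda_1$, and recognizing the resulting expression on the right as $R(u;\phi,\lambda_1)$ as defined in \eqref{eq:def_Rphi}, I obtain
\begin{equation*}
(1-\sigma)\lambda_1[\phi(z_1)-\phi_*] \le R(u;\phi,\lambda_1).
\end{equation*}
Taking the infimum over $u$ yields $\phi(z_1)-\phi_* \le R(\phi;\lambda_1)/((1-\sigma)\lambda_1)$.

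Finally, writing $b_i := \|z_{i-1}-z_i+\tilde v_i\|^2/\lambda_i^2$ so that the summand in the first display above equals $(1-\sigma)\lambda_i b_i/2$, the combined bound becomes $\sum_{i=2}^k \lambda_i b_i \le 2R(\phi;\lambda_1)/((1-\sigma)^2\lambda_1)$. Since $\min_{2\le i\le k} b_i$ times $\sum_{i=2}^k \lambda_i = \Lambda_k-\lambda_1$ is bounded by this sum, I conclude that there exists $i\in\{2,\ldots,k\}\subseteq\{1,\ldots,k\}$ with $b_i \le 2R(\phi;\lambda_1)/((1-\sigma)^2\lambda_1(\Lambda_k-\lambda_1))$, which is exactly \eqref{eq:corGIPP_complexaa}. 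The only subtlety is correctly matching the constants so that the $(1-\sigma)^2\lambda_1$ factor in the denominator emerges — one factor of $(1-\sigma)\lambda_1$ comes from converting the descent bound into an estimate on $\phi(z_1)-\phi_*$ via $R(\phi;\lambda_1)$, while the other $(1-\sigma)/2$ comes from the coefficient in part (a); there is no real difficulty beyond careful bookkeeping.
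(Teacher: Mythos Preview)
Your proposal is correct and matches the paper's proof essentially step for step: part (a) is obtained from Lemma~\ref{LemGIPP1} by observing that \eqref{eq:errocritGIPP} forces $\delta_k=0$, and part (b) combines the telescoped descent inequality from (a) over $i=2,\ldots,k$ with Lemma~\ref{lem:phikd0} at $k=1$ to bound $\phi(z_1)-\phi_*$ by $R(\phi;\lambda_1)/((1-\sigma)\lambda_1)$, followed by the standard minimum-over-weighted-sum argument.
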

\begin{proof}  (a)
The proof of \eqref{eqLemaux:rasc}  follows immediately from \eqref{eq:pre-complex} and the fact that \eqref{eq:errocritGIPP} 
is equivalent to
$\delta_k=0$. 

%(b) First note that the first inequality in \eqref{eq:corGIPP_complexaa} holds due to \eqref{eq:GIPPF_complex} with $\delta_k=0$.
%It remains to show that the second inequality in \eqref{eq:corGIPP_complexaa} also holds.
%Indeed, without any loss of generality, we may assume that $d_0<\infty$ and hence that
% there exists an optimal solution $z^*$ of \eqref{prob1_NC} such that $d_0=\|z_0-z^*\|$.
%Lemma~\ref{lem:phikd0} with  $k=1$ and $u=z^*$, \eqref{eqLemaux:rasc} and the  definition of $\Lambda_k$  then imply that
%
%????
(b) It follows from definitions of $\phi_*$ and $R(\cdot;\cdot,\cdot)$  in \eqref{prob1_NC} and \eqref{eq:def_Rphi}, respectively, \eqref{eqLemaux:rasc} and Lemma~\ref{lem:phikd0} with $k=1$
that for all $u\in \Re^n$,
\begin{align*}
&  \frac{R(u;\phi,\lambda_1)}{(1-\sigma)\lam_1}= \frac{1}{2(1-\sigma)\lambda_1} \|z_0-u\|^2+\phi(u)-\phi_*
\ge\phi(z_1)-\phi_* \geq  \sum_{i=2}^{k}[\phi(z_{i-1})-\phi(z_{i})] \\
& \geq  (1-\sigma) \sum_{i=2}^{k} \frac{ \left\|z_{i-1}-z_{i}+\tilde v_{i}\right\|^{2} }{2 \lam_i}
\geq \frac{(1-\sigma)(\Lambda_{k}-\lambda_1)}{2} \min_{i \le  k} \frac{1}{\lambda_i^2}\left \|z_{i-1}-z_{i}+\tilde v_{i}\right\|^{2}
\end{align*}
and hence that \eqref{eq:corGIPP_complexaa} holds in view of the definition of $R(\cdot;\cdot)$ in \eqref{eq:def_Rphi}.
\end{proof}

Proposition \ref{corGIPP1}(a) shows that GIPP enjoys the descent property \eqref{eqLemaux:rasc} which many frameworks and/or algorithms for solving \eqref{prob1_NC}
also share. It is worth noting that, under the assumption that $\phi$ is a KL-function, % the papers \cite{??} develop
frameworks and/or algorithms sharing this property have been developed for example  in \cite{Attouch2009,Attouch2011,chouzenoux2016block,frankel2015splitting} where it is shown that
 the generated sequence $\{z_k\}$  converges to 
some stationary point of \eqref{prob1_NC} with a well-characterized asymptotic (but not global) convergence rate,
as long as $\{z_k\}$ has an accumulation point.

%????????We now make a few remarks about Lemma~\ref{LemGIPP1} and Proposition~\ref{corGIPP1}. Both of these results assume that
%\eqref{inclusion:GIPPF} holds, or equivalently, that
%\[
%\lambda_k\phi (z) + \frac1{2} \| z-z_{k-1} \|^2  \geq  \lambda_k\phi (z_k)+ \frac1{2} \| z_k-z_{k-1} \|^2 +\langle \tilde v_k , z-z_k\rangle-\tilde \varepsilon_k \qquad \forall z\in \Re^n, \forall k\geq 1.
%\]
%However, a close examination of the proof of  Lemma~\ref{LemGIPP1}  shows that only the above inequality with $z=z_{k-1}$ is used to
%obtain its conclusion.

%Most of the proof of the result below follows immediately from Proposition \ref{corGIPP1}(c). Its only novelty is the derivation of
%an alternative convergence rate bound on the second term of \eqref{eq:corGIPP_complex} in terms of the distance of the initial point $z^0$ to the solution set of \eqref{prob1_NC}.

The following result, which follows immediately from Proposition \ref{corGIPP1}, considers the instances of the GIPP framework in which $\{\lambda_k \}$ is constant. For the purpose of stating it,
%and other subsequent results??????,
define
\begin{equation} \label{eq:dist0}
d_0:= \inf\{\|z_0-z^*\|:z^*\; \mbox{\rm is an optimal solution of \eqref{prob1_NC}} \}.
\end{equation}
Note that $d_0 < \infty$ if and only if \eqref{prob1_NC} has an optimal solution in which case
the above infimum can be replaced by a minimum in view of the first assumption following
\eqref{prob1_NC}.

\begin{corollary}\label{proprasc1}
Let $\{(\lam_k,z_k,\tilde v_k,\tilde \varepsilon_k)\}$ be generated by an instance the GIPP framework in which
$\lambda_k=\lambda$  for every $k \ge 1$, and define
%Moreover, consider the sequences $\{(v_k,\varepsilon_k)\}$ and $\{r_k\}$ defined as
%\[
%(v_k,\varepsilon_k)=\frac{1}{\lambda}\left(\tilde v_k, \tilde \varepsilon_k\right),\quad  r_k=\frac{z_{k-1}-z_k}{\lambda},\quad \forall k\geq 1.
%\]
$\{(v_k,\varepsilon_k,r_k)\}$ as in \eqref{def:rveps}. % and $d_0$ as in \eqref{eq:dist0}.
Then, the following statements hold:
\begin{itemize}
\item[(a)]  for every $k> 1$, there exists $i\leq k$ such that
\begin{equation}\label{eq:corGIPP_complex2}
\frac{1}{\lambda^2}\|z_{i-1}-z_i+\tilde v_i\|^2\leq \frac{2 R(\phi;\lambda)}{\lambda^2(1-\sigma)^2(k-1)}\leq\frac{\min\left\{2[\phi(z_0)-\phi_*], 
\frac{ d_0^2}{(1-\sigma)\lambda}\right\}}{\lambda(1-\sigma)(k-1)}
\end{equation}
where  $R(\cdot;\cdot)$ and $d_0$ are  as in   \eqref{eq:def_Rphi} and \eqref{eq:dist0}, respectively;
%where   $\Delta^0_\phi$ is as in Lemma~\ref{LemGIPP1} and $d_0$ is as in \eqref{eq:dist0};
%\item[(b)]  for any $\bar \rho>0$, there exists an index $i$ such that $\|v_i\|\leq\|r_i+v_i\|\leq \bar \rho$ and
%\begin{equation}\label{eq:complexCor23}
%i=\mathcal{O}\left(\frac{\min\left\{\Delta^0_\phi,d_0^2/\lambda\right\}}{\lambda\bar\rho^2}\right).
%\end{equation}
\item[(b)]  for any given tolerance $\bar \rho>0$, the GIPP generates a quintuple $(z^-,z,\tilde v,\tilde \varepsilon)$ such that 
$\|z^- -z+\tilde v\|\leq \lambda \bar \rho$ in a number of iterations bounded by
\begin{equation}\label{eq:complexCor23}
\left\lceil\frac{2 R(\phi;\lambda)}{\lambda^2(1-\sigma)^2 \bar \rho^2}+1\right\rceil.
\end{equation}
%\item[(b)]  for any given $\bar \rho>0$, there exists an index $i$ such that $\|v_i\|\leq\|r_i+v_i\|\leq \bar \rho$ and  
%\begin{equation}\label{eq:complexCor23}
%%i=\mathcal{O}\left(\frac{R^0_\phi}{\lambda\bar \rho^2}\right)
%%\end{equation}
\end{itemize}
\end{corollary}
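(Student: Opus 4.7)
The plan is to derive both parts as direct specializations of Proposition~\ref{corGIPP1}(b) to the constant stepsize setting, combined with two natural test points in the infimum defining $R(\phi;\lambda)$.

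For part (a), I would start by specializing $\lambda_k=\lambda$ for all $k$, which gives $\Lambda_k=k\lambda$ and in particular $\Lambda_k-\lambda_1=(k-1)\lambda$. Substituting into \eqref{eq:corGIPP_complexaa} immediately yields the first inequality in \eqref{eq:corGIPP_complex2}. To obtain the second inequality, I would upper bound $R(\phi;\lambda)$ by two specific choices of $u$ in the infimum \eqref{eq:def_Rphi}. Taking $u=z_0$ makes the quadratic term vanish and gives $R(\phi;\lambda)\le(1-\sigma)\lambda[\phi(z_0)-\phi_*]$, which when substituted produces the term $2[\phi(z_0)-\phi_*]/[\lambda(1-\sigma)(k-1)]$. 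Taking $u=z^*$ for any optimal solution of \eqref{prob1_NC} (if one exists) makes the functional-gap term vanish and, after taking the infimum over such $z^*$, gives $R(\phi;\lambda)\le d_0^2/2$, producing the term $d_0^2/[\lambda^2(1-\sigma)^2(k-1)]$. Combining the two bounds as a minimum and noting that if \eqref{prob1_NC} has no optimal solution then $d_0=\infty$ and the min trivially reduces to the first term, I arrive at the right-hand side claimed in \eqref{eq:corGIPP_complex2}.

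Part (b) is then a direct algebraic consequence of part (a). I would require the left-hand side of the first inequality in \eqref{eq:corGIPP_complex2} to be at most $\bar\rho^2$, which translates to $\|z_{i-1}-z_i+\tilde v_i\|\le\lambda\bar\rho$, and solve for the minimal number of iterations $k$ that makes the right-hand side $2R(\phi;\lambda)/[\lambda^2(1-\sigma)^2(k-1)]$ not exceed $\bar\rho^2$. This gives $k\ge 2R(\phi;\lambda)/[\lambda^2(1-\sigma)^2\bar\rho^2]+1$, and taking the ceiling yields \eqref{eq:complexCor23}. The resulting index $i\le k$ identified by part (a) provides the desired quintuple $(z^-,z,\tilde v,\tilde\varepsilon)=(z_{i-1},z_i,\tilde v_i,\tilde\varepsilon_i)$.

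There is essentially no technical obstacle here: all the heavy lifting (the descent inequality \eqref{eqLemaux:rasc} and the one-step functional bound from Lemma~\ref{lem:phikd0}) has already been done in Proposition~\ref{corGIPP1}. The only point requiring a bit of care is the handling of the case $d_0=\infty$, which is harmless since the minimum then drops the infinite term, so the inequality remains valid without assuming existence of an optimal solution.
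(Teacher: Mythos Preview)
Your proposal is correct and follows essentially the same approach as the paper: both specialize Proposition~\ref{corGIPP1}(b) to the constant-stepsize case to get the first inequality, then bound $R(\phi;\lambda)$ via the choices $u=z_0$ and $u=z^*$ for the second, with part (b) being an immediate algebraic consequence. The paper's proof is slightly terser (it just says part (b) ``follows immediately from the first inequality in (a)''), but the substance is identical.
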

\begin{proof} 
(a) The proof of the first inequality follows immediately from Proposition~\ref{corGIPP1}(b)  and the fact that $\lambda_k=\lambda$ for every $k \ge 1$.
Now, note that due to \eqref{eq:def_Rphi}, we have $R(\phi;\lambda) \leq R(z_0;\phi,\lambda)=(1-\sigma)\lambda[\phi(z_0)-\phi_*$] and
$R(\phi;\lambda)\leq R(z^*;\phi,\lambda)=\|z_0-z^*\|^2/2$ for every optimal solution $z^*$  of \eqref{prob1_NC}.
The second inequality now follows from the previous observation and the definition of $d_0$ in \eqref{eq:dist0}. 

(b) This statement  follows immediately from the first inequality in (a).
\end{proof}

%We start by making an important remark about Proposition \ref{proprasc1}.
%The derived iteration-complexity bound \eqref{eq:complexCor23'} to obtain
%a $(\bar\rho,\bar\varepsilon)$-prox approximate solution of \eqref{eq:PenProb2Intro} as in \eqref{eq:ref4'}
%does not necessarily improve as $\lambda$ increases due to the second term inside the max operand.
%However, the iteration-complexity bound \eqref{eq:cggomplexCor23} improves as $\lambda$ grows and will be used
%in our analysis of Subsection \ref{sec:AIPPmet} to analyze
%%obtain better iteration-complexities for methods which work with large stepsizes $\lam$ such as the
%an accelerated instance of the GIPP method
%briefly discussed at the end of this subsection and in detail
%in Subsection \ref{sec:AIPPmet}.

In the above analysis, we have assumed that $\phi$ is quite general.
On the other hand, the remaining part of this subsection assumes that $\phi$ has the composite structure as in \eqref{eq:PenProb2Intro},
i.e., $\phi=g+h$ where $g$ and $h$ satisfy conditions (A1)-(A3) of Subsection \ref{subsec:approxsol}.

We now briefly discuss some specific instances of the GIPP framework.
Recall that, for given  stepsize  $\lambda>0$ and initial point $z_0\in \dom  h$, 
the composite gradient method  for solving the CNO problem \eqref{eq:PenProb2Intro} computes 
recursively a sequence $\{z_k\} $ given by 
\begin{equation}\label{gradientmethod}
z_{k}=\argmin_z \left\{ \ell_g(z;z_{k-1}) +\frac{1}{2\lambda}\left\|z- z_{k-1}\right\|^2+h(z)\right\}
\end{equation}
where $\ell_g(\cdot;\cdot)$ is defined in \eqref{eq:defell}.
Note that if $h$ is the indicator  function of a closed convex set then the above  scheme reduces to the classical projected gradient method. 
%Under the assumption that $\lambda<2/M$ and $\nabla g$ is  $M$-Lipschitz continuous, it is well-known that
%the composite gradient method obtains a $\hat \rho$-approximate solution in
%$\mathcal{O}([\phi(z_0)-\phi_*]/(\lambda\hat \rho^2))$ iterations.

The following result, whose proof is given in Appendix~\ref{app:proofgradmet}, shows that the composite gradient method with
$\lam$ sufficiently small is
a special case of the GIPP framework in which $\lambda_k = \lambda$ for all $k$.

\begin{proposition}\label{prop:gradient method} Let $\{z_k\}$ be generated by the composite gradient method \eqref{gradientmethod} with
$\lambda \leq 1/m$ and $\lambda < 2/M$, and define $\tilde v_k:=z_{k-1}-z_k$, $\lambda_k:=\lambda$ and
\begin{equation}\label{eq:statCGM}
  \tilde \varepsilon_k:=\lambda\left[g(z_k)-\ell_g(z_k;z_{k-1}) +\frac{1}{2\lambda}\|z_k-z_{k-1}\|^2\right].
\end{equation}
Then, the quadruple $(\lambda_k,z_k,\tilde v_k,\tilde \varepsilon_k)$ satisfies the inclusion \eqref{inclusion:GIPPF} with $\phi=g+h$, and the relative error condition \eqref{eq:errocritGIPP} with $\sigma := (\lambda M+2)/4$.
Thus, the composite gradient method \eqref{gradientmethod} can be seen as an instance of the GIPP framework.
\end{proposition}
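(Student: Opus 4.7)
The plan is to verify the two defining conditions of the GIPP framework directly from the first-order optimality condition of the subproblem \eqref{gradientmethod} and from the two-sided bound in assumption (A2).

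First, I would establish the $\varepsilon$-subgradient inclusion \eqref{inclusion:GIPPF}. The optimality condition of \eqref{gradientmethod} reads $\tfrac{1}{\lambda}(z_{k-1}-z_k) - \nabla g(z_{k-1}) \in \partial h(z_k)$, which can be rewritten as $\tilde v_k - \lambda \nabla g(z_{k-1}) \in \lambda \partial h(z_k)$. Using convexity of $h$ at $z_k$ with this subgradient yields, for every $z\in\Re^n$,
\[
\lambda h(z) \ge \lambda h(z_k) + \langle \tilde v_k - \lambda\nabla g(z_{k-1}),\, z-z_k\rangle.
\]
Applying the lower curvature bound in \eqref{ineq:concavity_g}, namely $\lambda g(z)\ge \lambda\ell_g(z;z_{k-1}) - \tfrac{\lambda m}{2}\|z-z_{k-1}\|^2$, together with the identity $\lambda\ell_g(z;z_{k-1}) = \lambda\ell_g(z_k;z_{k-1}) + \lambda\langle \nabla g(z_{k-1}),z-z_k\rangle$, the $\nabla g(z_{k-1})$ terms cancel. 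The hypothesis $\lambda m \le 1$ then absorbs $-\tfrac{\lambda m}{2}\|z-z_{k-1}\|^2$ into the full quadratic $\tfrac{1}{2}\|z-z_{k-1}\|^2$ on the left, giving
\[
\lambda \phi(z) + \tfrac12\|z-z_{k-1}\|^2 \;\ge\; \lambda\ell_g(z_k;z_{k-1}) + \lambda h(z_k) + \langle \tilde v_k, z-z_k\rangle.
\]
Comparing with the desired RHS $\lambda\phi(z_k) + \tfrac12\|z_k-z_{k-1}\|^2 + \langle \tilde v_k, z-z_k\rangle - \tilde\varepsilon_k$, the gap is exactly $\lambda[\ell_g(z_k;z_{k-1}) - g(z_k)] - \tfrac12\|z_k-z_{k-1}\|^2 + \tilde\varepsilon_k = 0$ by the definition \eqref{eq:statCGM}. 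Nonnegativity of $\tilde\varepsilon_k$ follows from the lower curvature bound applied at $u=z_k,z=z_{k-1}$ and, again, $\lambda m \le 1$.

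Second, I would verify the relative error condition \eqref{eq:errocritGIPP}. Since $\tilde v_k = z_{k-1}-z_k$, we have $z_{k-1}-z_k+\tilde v_k = 2(z_{k-1}-z_k)$, so $\|z_{k-1}-z_k+\tilde v_k\|^2 = 4\|\tilde v_k\|^2$. The condition reduces to $2\tilde\varepsilon_k \le (4\sigma - 1)\|\tilde v_k\|^2 = (\lambda M+1)\|z_k-z_{k-1}\|^2$. Invoking the upper curvature bound $g(z_k) - \ell_g(z_k;z_{k-1}) \le \tfrac{M}{2}\|z_k-z_{k-1}\|^2$ in \eqref{eq:statCGM} gives $2\tilde\varepsilon_k \le \lambda M\|z_k-z_{k-1}\|^2 + \|z_k-z_{k-1}\|^2$, which is exactly the required inequality. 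Finally, $\sigma = (\lambda M+2)/4 \in (0,1)$ because $\lambda M < 2$ by hypothesis.

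Neither step involves a hard estimate; the only delicate point is lining up the two appearances of the stepsize condition on $\lambda$. The bound $\lambda \le 1/m$ is precisely what is needed to push the lower curvature bound through as an honest $\varepsilon$-subgradient inequality (rather than having to absorb a leftover negative quadratic into $\tilde\varepsilon_k$), while $\lambda < 2/M$ is precisely what makes the relative error parameter $\sigma$ lie strictly below $1$. Recognizing which of these two inequalities is responsible for which condition is the main conceptual step; the rest is a careful but mechanical combination of the two bounds in \eqref{ineq:concavity_g} with the convexity of $h$.
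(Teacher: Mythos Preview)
Your proof is correct and follows essentially the same logic as the paper's. The only difference is packaging: for the inclusion \eqref{inclusion:GIPPF}, the paper introduces the auxiliary function $\Psi_\lambda := g + \tfrac{1}{2\lambda}\|\cdot-z_{k-1}\|^2$, observes that $\lambda\le 1/m$ makes $\Psi_\lambda$ convex, and then invokes the transportation formula (Proposition~\ref{prop:transpForm}) together with the sum rule $\partial h(z_k)+\partial_{\varepsilon_k}\Psi_\lambda(z_k)\subset\partial_{\varepsilon_k}(h+\Psi_\lambda)(z_k)$; you instead write out the $\varepsilon$-subgradient inequality by hand and use the lower curvature bound in \eqref{ineq:concavity_g} directly, with $\lambda m\le 1$ entering to discard the leftover nonnegative quadratic $\tfrac{1-\lambda m}{2}\|z-z_{k-1}\|^2$. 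Unpacking the paper's abstract route gives exactly your computation, and the verification of \eqref{eq:errocritGIPP} is identical in both proofs.
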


Under the assumption that $\lambda<2/M$ and $\nabla g$ is  $M$-Lipschitz continuous, it is well-known that
the composite gradient method obtains a $\hat \rho$-approximate solution in
$\mathcal{O}([\phi(z_0)-\phi_*]/(\lambda\hat \rho^2))$ iterations.
On the other hand,  under the assumption that $\lambda \le 1//M$ and $\nabla g$ is  $M$-Lipschitz continuous, we can easily see
that the above result together with  Corollary~\ref{proprasc1}(b) imply that the composite gradient method obtains a $\hat \rho$-approximate solution in
$\mathcal{O}(R(\phi;\lambda)/(\lambda^2\hat \rho^2))$ iterations.

We now make a few general remarks about our discussion in this subsection so far. 
%Second, Corollary~\ref{proprasc1}(b) implies that the larger $\lambda$ is, the smaller the complexity bound \eqref{eq:complexCor23} becomes.
First, the condition on the stepsize $\lambda$ of Proposition~\ref{prop:gradient method} forces it to be ${\cal O}(1/M)$ and hence quite small whenever
$M\gg m$.
Second, Corollary~\ref{proprasc1}(b) implies that the larger $\lambda$ is, the smaller the complexity bound \eqref{eq:complexCor23} becomes.
Third,
%it is natural to conjecture whether there is a specific instance
%of the GIPP framework in which  $\lambda=\mathcal{O}(1/m)$.
%Fourth,
letting $\lambda_k=\lam$  in the GIPP framework for some $\lambda \le 1/m$ guarantees that the
function $\lambda_k \phi  + \|\cdot-z_{k-1}\|^2/2$ which appears in \eqref{inclusion:GIPPF} is convex.

In the remaining part of this subsection, we briefly outline the ideas behind an accelerated instance of the GIPP framework
which chooses $\lambda=\mathcal{O}(1/m)$.
 First, note that when $\sigma=0$, \eqref{inclusion:GIPPF} and \eqref{eq:errocritGIPP} imply that $(\tv_k,\tilde \varepsilon_k)=(0,0)$ and
 \begin{equation}\label{inclusion:GIPPF'}
0 \in \partial \left ( \lambda_k\phi + \frac1{2} \| \cdot-z_{k-1} \|^2 \right)(z_k).
\end{equation}
and hence that $z_k$ is an optimal solution of the prox-subproblem
\begin{equation}\label{eq:prox-sub}
z_{k}=\argmin_z \left\{ \lambda_k \phi (z)+\frac{1}{2}\left\|z- z_{k-1}\right\|^2\right\}.
\end{equation}
More generally, assuming that \eqref{eq:errocritGIPP} holds for some $\sigma > 0$ gives us an interpretation of
$z_k$, together with $(\tilde v_k,\tilde \varepsilon_k)$,
as being an approximate solution of  \eqref{eq:prox-sub} where its (relative) accuracy is measured by the $\sigma$-criterion
\eqref{eq:errocritGIPP}.
Obtaining such an approximate solution is  generally  difficult unless the objective function of the prox-subproblem \eqref{eq:prox-sub} is convex. This suggests choosing $\lambda_k=\lambda$ for some $\lam \le 1/m$ which, according to a remark in the
previous paragraph, ensures that $\lambda_k \phi + (1/2)\|\cdot\|^2 $ is convex for every $k$,
and then applying an ACG method to the (convex) 
prox-subproblem~\eqref{eq:prox-sub} to obtain $z_k$ and a certificate pair $(\tilde v_k, \tilde \varepsilon_k)$ satisfying \eqref{eq:errocritGIPP}.
An accelerated prox-instance of the GIPP framework obtained in this manner will be the subject of Subsection \ref{sec:AIPPmet}.

%%%%%%%%%%%%%%%%%%%%%%%%%%%%%%%%%%%%%%%%%%%%%%%%%%%%%%%%%%%%%%%%%%%%%%%
%%%%%%%%%%%%%%%%%%%%%%%%%%%%%%%%%%%%%%%%%%%%%%%%%%%%%%%%%%%%%%%%%%%%%%%

\section{Accelerated gradient methods}\label{sec:accgradmet}
The main goal of this section is to present another instance of the GIPP framework where the triples $(z_k,\tilde v_k,\tilde \varepsilon_k)$ are obtained
by applying an ACG method to  the subproblem~\eqref{eq:prox-sub}.
It contains two subsections. The first one reviews an  ACG variant for solving a composite strongly convex optimization problem
and discusses some well-known and new results for it which will be useful in the analysis of the accelerated GIPP  instance.
Subsection~\ref{sec:AIPPmet} presents the accelerated  GIPP  instance for solving \eqref{eq:PenProb2Intro} and derives its corresponding  iteration-complexity bound.

%%%%%%%%%%%%%%%%%%%%%%%%%%%%%%%%%%%%%%%%%%%
%%%%%%%%%%%%%%%%%%%%%%%%%%%%%%%%%%%%%%%%%%%

\subsection{Accelerated gradient method for strongly convex optimization}\label{subsec:Nesterov's-Method}

This subsection reviews an ACG  variant and its convergence properties for solving the following optimization problem
\begin{equation}\label{mainprob:nesterov}
\min \{\psi(x):=\psi_s(x)+\psi_n(x) :  x \in \Re^n\}
\end{equation}
where the following conditions are assumed to hold
\begin{itemize}
\item [(B1)]$\psi_n:\Re^n\rightarrow (-\infty,+\infty]$ is a proper, closed and $\mu$-strongly convex  function with $\mu \ge 0$;
\item [(B2)]$\psi_s$ is a convex differentiable function on $\dom \psi_n$ which,  for some $L>0$, satisfies
$
\psi_s(u)-\ell_{\psi_s}(u; x)
%[ \psi_s(x)+\langle \nabla\psi_s(x),u-x\rangle]
\leq  L\|u-x\|^2/2$ for every $ x, u \in \dom \psi_n$
where $\ell_{\psi_s}(\cdot;\cdot)$ is defined in \eqref{eq:defell}.
\end{itemize}

The ACG variant (\cite{Attouch2016,YHe2,nesterov2012gradient,nesterov1983,tseng2008accmet}) for solving \eqref{mainprob:nesterov} is as follows.

\noindent\rule[0.5ex]{1\columnwidth}{1pt}

\textbf{ACG Method}

\noindent\rule[0.5ex]{1\columnwidth}{1pt}
\begin{itemize}
\item[(0)]  Let a pair of functions $(\psi_s,\psi_n)$  as in \eqref{mainprob:nesterov} 
and initial point $x_{0}\in \dom  \psi_n $ be given, and set $y_{0}=x_{0}$, $A_{0}=0$, $\Gamma_0\equiv0$ and $j=0$; 
\item[(1)]  compute
\begin{align*}
 A_{j+1}  &=A_{j}+\frac{\mu A_{j}+1+\sqrt{(\mu A_{j}+1)^2+4L(\mu A_{j}+1)A_{j}}}{2L},\\
\tilde{x}_{j}  &=\frac{A_{j}}{A_{j+1}}x_{j}+\frac{A_{j+1}-A_{j}}{A_{j+1}}y_{j},\quad\Gamma_{j+1}=\frac{A_{j}}{A_{j+1}}\Gamma_j+\frac{A_{j+1}-A_{j}}{A_{j+1}}\ell_{\psi_s}(\cdot;\tilde x_j),\\
y_{j+1} &=\argmin_{y}\left\{ \Gamma_{j+1}(y)+\psi_n(y)+\frac{1}{2A_{j+1}}\|y-y_{0}\|^{2}\right\},\\
x_{j+1} & =\frac{A_{j}}{A_{j+1}}x_{j}+\frac{A_{j+1}-A_{j}}{A_{j+1}}y_{j+1};
\end{align*}
\item[(2)] compute 
\begin{align*}
u_{j+1}&=\frac{y_0-y_{j+1}}{A_{j+1}},\\[2mm]
\eta_{j+1}&= \psi(x_{j+1})-\Gamma_{j+1}(y_{j+1})- \psi_n(y_{j+1})-\langle u_{j+1},x_{j+1}-y_{j+1}\rangle;   
\end{align*}
\item[(3)]  set $j\leftarrow j+1$ and go to (1). 
\end{itemize}
\noindent\rule[0.5ex]{1\columnwidth}{1pt}

Some remarks about the ACG method follow. First, the main core and usually the common way of describing an iteration of the ACG method is as in step~1.
Second, the
extra sequences $\{u_j\}$ and $\{\eta_j\}$ computed  in step~2 will be used to develop a stopping criterion for the ACG method when the latter
is called as a subroutine in the context of
the AIPP method stated in Subsection~\ref{sec:AIPPmet}.
Third, the ACG method in which  $\mu=0$  is a special case of a slightly more general one studied by Tseng in \cite{tseng2008accmet} (see Algorithm~3
of \cite{tseng2008accmet}).
The analysis of the general case of the ACG method in which $\mu\geq0$ was studied  in  \cite[Proposition~2.3]{YHe2}.

The next proposition summarizes the basic properties of the ACG method.

\begin{proposition}\label{prop1:nesterov} Let $\{(A_j,\Gamma_j,x_j,u_j,\eta_j)\}$     be the sequence generated by the ACG method applied to \eqref{mainprob:nesterov}
where $(\psi_s,\psi_n)$ is a given pair of data functions satisfying (B1) and (B2) with $\mu \geq 0$. Then,  the following statements hold
\begin{itemize}
\item[(a)] for every $j\ge1 $, we have $\Gamma_j\leq \psi_s$ and 
\begin{align}
\psi(x_j)  &\leq\min_{x}\left\{\Gamma_{j}(x)+\psi_n(x)+\frac{1}{2A_{j}}\|x-x_{0}\|^{2}\right\}, \label{ineq:psixj}\\[2mm]
A_{j}&\geq\frac{1}{L}\max\left\{\frac{j^{2}}{4},\left(1+\sqrt{\frac{\mu}{4L}}\right)^{2(j-1)}\right\};\label{ineq:increasingA_k}
\end{align}
\item[(b)] for every solution $x^*$  of \eqref{mainprob:nesterov}, we have 
\begin{equation}\label{eq:Nest_funct}
\psi(x_{j})-\psi(x^*)\leq\frac{1}{2A_{j}}\|x^*-x_{0}\|^{2} \qquad \forall j\geq 1;
\end{equation}
\item[(c)] for every $j\geq 1$, we have
\begin{equation}
u_j\in  \partial_{\eta_{j}}(\psi_s+\psi_n)(x_j) ,\quad\|A_{j}u_{j}+x_{j}-x_{0}\|^{2}+2A_{j}\eta_{j}\le\|x_{j}-x_{0}\|^{2}.\label{ineq:NestHPE}
\end{equation}
\end{itemize}
\end{proposition}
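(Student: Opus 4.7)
The plan is to follow an estimate-sequence argument in the style of Nesterov and Tseng, adapted to handle the strongly convex regularizer $\psi_n$. Since the method is already known in the $\mu=0$ case (Tseng, Algorithm~3) and was extended to $\mu \ge 0$ elsewhere, the proof is mainly bookkeeping; what needs to be checked carefully is the precise form of the coefficient recursion and the new quantities $(u_j,\eta_j)$ introduced in step~2.

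For part~(a), I will first establish $\Gamma_j \le \psi_s$ by induction. The base case is $\Gamma_1 = \ell_{\psi_s}(\cdot;\tilde x_0) \le \psi_s$ (this follows because $A_0=0$ collapses the update for $\Gamma_1$, and $\ell_{\psi_s}(\cdot;\tilde x_0) \le \psi_s$ by convexity of $\psi_s$). The inductive step uses the convex combination in the definition of $\Gamma_{j+1}$. Next, the central claim is the stronger invariant
\[
A_j \psi(x_j) \;\le\; \min_x \Bigl\{ A_j[\Gamma_j(x)+\psi_n(x)] + \tfrac{1}{2}\|x-x_0\|^2 \Bigr\},
\]
which clearly implies \eqref{ineq:psixj}. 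I would prove it by induction on $j$: the induction step combines (i) strong convexity of $\psi_n$ with parameter $\mu$, which strengthens the minimand, (ii) the identity $L(A_{j+1}-A_j)^2=(\mu A_j+1)A_{j+1}$ coming directly from the formula for $A_{j+1}$, so that coefficients match, and (iii) the descent lemma $\psi_s(x_{j+1}) \le \ell_{\psi_s}(x_{j+1};\tilde x_j)+\tfrac{L}{2}\|x_{j+1}-\tilde x_j\|^2$ applied at the ACG iterate $x_{j+1}$, whose residual quadratic cancels precisely because of the convex-combination definitions of $\tilde x_j$ and $x_{j+1}$. The lower bounds in \eqref{ineq:increasingA_k} are elementary consequences of the recursion: dropping $\mu$ gives $\sqrt{LA_{j+1}}-\sqrt{LA_j}\ge 1/2$ hence $A_j\ge j^2/(4L)$, while keeping $\mu$ gives $A_{j+1}/A_j \ge (1+\sqrt{\mu/(4L)})^2$ for $j\ge 1$.

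Part~(b) is then immediate: evaluating \eqref{ineq:psixj} at $x=x^*$ and using $\Gamma_j\le\psi_s$ yields $\psi(x_j)\le \psi(x^*)+\|x^*-x_0\|^2/(2A_j)$.

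For part~(c), first I would derive the $\eta_{j+1}$-subdifferential inclusion. Since $\Gamma_{j+1}$ is affine, writing $g:=\nabla\Gamma_{j+1}$, the first-order optimality condition of the subproblem defining $y_{j+1}$ reads $u_{j+1}-g\in\partial\psi_n(y_{j+1})$, where I used $u_{j+1}=(y_0-y_{j+1})/A_{j+1}$. For any $z$, combining $\psi_s(z)\ge \Gamma_{j+1}(y_{j+1})+\langle g,z-y_{j+1}\rangle$ (from $\Gamma_{j+1}\le\psi_s$ and affinity) with the subgradient inequality for $\psi_n$ at $y_{j+1}$ gives
\[
\psi(z)\;\ge\;\Gamma_{j+1}(y_{j+1})+\psi_n(y_{j+1})+\langle u_{j+1},z-y_{j+1}\rangle \;=\; \psi(x_{j+1})-\eta_{j+1}+\langle u_{j+1},z-x_{j+1}\rangle,
\]
which establishes the inclusion; setting $z=x_{j+1}$ also yields $\eta_{j+1}\ge 0$. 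For the norm/error bound, apply \eqref{ineq:psixj} from part~(a) at $x=y_{j+1}$ to obtain $\psi(x_{j+1})\le \Gamma_{j+1}(y_{j+1})+\psi_n(y_{j+1})+\|y_{j+1}-x_0\|^2/(2A_{j+1})$, i.e.,
\[
2A_{j+1}\eta_{j+1}+2A_{j+1}\langle u_{j+1},x_{j+1}-y_{j+1}\rangle \;\le\; \|y_{j+1}-x_0\|^2.
\]
Substituting $A_{j+1}u_{j+1}=x_0-y_{j+1}$ (which follows from $y_0=x_0$) and applying the polarization identity $\|y_{j+1}-x_0\|^2+2\langle y_{j+1}-x_0,x_{j+1}-y_{j+1}\rangle=\|x_{j+1}-x_0\|^2-\|x_{j+1}-y_{j+1}\|^2$ yields $2A_{j+1}\eta_{j+1}+\|x_{j+1}-y_{j+1}\|^2\le\|x_{j+1}-x_0\|^2$; finally, $x_{j+1}-y_{j+1}=A_{j+1}u_{j+1}+x_{j+1}-x_0$ converts this to the stated bound. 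The main obstacle is the induction step for the invariant in part~(a)—the algebra linking $\tilde x_j$, the convex combinations, and the recursion $L(A_{j+1}-A_j)^2=(\mu A_j+1)A_{j+1}$ is where all pieces must conspire; once that is in hand, parts~(b) and~(c) are short corollaries.
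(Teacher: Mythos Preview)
Your proposal is correct and follows essentially the same route as the paper. For parts~(a) and~(b) the paper simply cites \cite[Proposition~2.3]{YHe2}, which carries out precisely the estimate-sequence induction you sketch; for part~(c) the paper proves the inclusion by observing $u_j\in\partial(\Gamma_j+\psi_n)(y_j)$ and invoking Proposition~\ref{prop:transpForm} to transport to $x_j$ (rather than exploiting affinity of $\Gamma_j$ as you do, though the two are equivalent), and then derives the norm bound by the same algebraic identities you use, just combined in a slightly different order.
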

\begin{proof}
For the proofs of (a) and (b) see  \cite[Proposition 2.3]{YHe2}. 

(c) It follows from the optimality condition  for $y_{j}$  and  the definition of $u_j$ that
$u_j\in \partial (\Gamma_j +\psi_n) (y_j)$, for all $j\geq 1$. Hence  Proposition~\ref{prop:transpForm}  yields  
\begin{equation}\label{eq:subdifaux}
(\Gamma_j +\psi_n)(x)\geq (\Gamma_j +\psi_n) (x_j)+\langle u_j, x-x_j\rangle -\tilde \eta_j,\quad \forall x\in \Re^n,
\end{equation} 
where $\tilde \eta_j= (\Gamma_j+\psi_n)(x_j)-(\Gamma_j+\psi_n)(y_j)-\langle u_j, x_j-y_j\rangle\geq0$. Thus the inclusion in \eqref{ineq:NestHPE} follows from \eqref{eq:subdifaux}, the first statement in (a), and the fact that $\eta_j=\tilde \eta_j+\psi_s(x_j)-\Gamma_j(x_j)$.
Now, in order to prove the inequality in \eqref{ineq:NestHPE}, note that  $y_0=x_0$ and that the definitions of $u_j$ and $\eta_j$  yield 
\begin{align}
&\|A_{j}u_{j}+x_{j}-x_{0}\|^{2}-\|x_{j}-x_{0}\|^{2}= \|y_j-y_0\|^2+2\langle y_0-y_j, x_j-y_0\rangle\\[2mm]
&2A_j\eta_j=2A_j[\psi(x_j)-(\Gamma_j +\psi_n)(y_j)]+2\langle y_0-y_j,y_j-x_j \rangle.
\end{align}
Then adding the above two identities we obtain
\begin{align*}
\|A_{j}u_{j}+x_{j}-x_{0}\|^{2}&+2A\eta_j-\|x_{j}-x_{0}\|^{2}= 2A_j[\psi(x_j)-(\Gamma_j+\psi_n)(y_j)]-\|y_j-y_0\|^2\\[2mm]
&\leq 2A_j\left[\psi(x_j)- \left((\Gamma_j+\psi_n)(y_j)+\frac{1}{2A_j}\|y_j-y_0\|^2\right)\right].
\end{align*}
Hence, the inequality in \eqref{ineq:NestHPE} follows from the last inequality,  the definition of $y_j$ and  \eqref{ineq:psixj}.
\end{proof}
\vgap

The main role of the ACG variant of this subsection is to find an approximate solution $z_k$ of
the subproblem \eqref{inclusion:GIPPF} together with a certificate pair $(\tv_k,\tilde \varepsilon_k)$ satisfying
\eqref{inclusion:GIPPF} and \eqref{eq:errocritGIPP}. Indeed,  since   \eqref{eq:prox-sub} with $\lambda$ sufficiently small is a special case of \eqref{mainprob:nesterov}, we can apply the ACG method with $x_0=z_{k-1}$ to obtain the triple $(z_k,\tv_k,\tilde \varepsilon_k)$ satisfying \eqref{inclusion:GIPPF} and \eqref{eq:errocritGIPP}.

 The following result essentially analyzes the iteration-complexity to compute
the aforementioned triple.

\begin{lemma}\label{lem:nest_complex} 
Let $\{(A_j,x_j, u_j,\eta_j)\}$ be the sequence generated by the ACG method applied to \eqref{mainprob:nesterov}
where $(\psi_s,\psi_n)$ is a given pair of data functions satisfying (B1) and (B2) with $\mu\geq 0$.
Then, for any $\sigma>0$ and index $j$ such that $A_j\geq2(1+\sqrt{\sigma})^{2}/\sigma$, we have

\begin{equation}\label{ineq:Nest_vksigma}
\|u_j\|^{2}+2\eta_j\le\sigma\|x_{0}-x_j+u_j\|^{2}.
\end{equation}
As a consequence, the ACG method obtains  a triple $(x,u,\eta)=(x_j,u_j,\eta_j)$ satisfying
\[
u\in  \partial_{\eta}(\psi_s+\psi_n)(x) \quad \|u\|^{2}+2\eta\le\sigma\|x_{0}-x+u\|^{2}
\]
in at most $\left\lceil2\sqrt{2L}(1+\sqrt{\sigma})/\sqrt{\sigma}\right\rceil$ iterations.
%\begin{equation}\label{eq:numbiter}
% \left\lceil\frac{2\sqrt{2L}(1+\sqrt{\sigma})}{\sqrt{\sigma}}\right\rceil
%\end{equation}
%iterations.
\end{lemma}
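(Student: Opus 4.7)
The plan is to derive the main inequality \eqref{ineq:Nest_vksigma} via elementary manipulation of the estimate in Proposition~\ref{prop1:nesterov}(c), and then translate the resulting condition on $A_j$ into a condition on the iteration index $j$ via the lower bound on $A_j$ in Proposition~\ref{prop1:nesterov}(a).

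First I would expand the squared norm on the left-hand side of the bound $\|A_j u_j + x_j - x_0\|^2 + 2A_j\eta_j \le \|x_j - x_0\|^2$ from Proposition~\ref{prop1:nesterov}(c), cancel the common $\|x_j - x_0\|^2$, and divide by $A_j$ to obtain the cleaner identity $A_j\|u_j\|^2 + 2\langle u_j, x_j - x_0\rangle + 2\eta_j \le 0$. Next, I would use the algebraic identity $\|x_0 - x_j + u_j\|^2 = \|u_j\|^2 - 2\langle u_j, x_j - x_0\rangle + \|x_j - x_0\|^2$ to substitute out the inner product and deduce
\[
(A_j + 1)\|u_j\|^2 + 2\eta_j + \|x_j - x_0\|^2 \le \|x_0 - x_j + u_j\|^2.
\]
In particular this yields $\|x_j - x_0\| \le \|x_0 - x_j + u_j\|$, and combining with the original Proposition~\ref{prop1:nesterov}(c) bound gives $\eta_j \le \|x_0 - x_j + u_j\|^2/(2A_j)$, while the displayed inequality itself gives $\|u_j\|^2 \le \|x_0 - x_j + u_j\|^2/(A_j+1)$. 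Adding these two bounds and invoking the hypothesis $A_j \ge 2(1+\sqrt{\sigma})^2/\sigma$ should then deliver \eqref{ineq:Nest_vksigma}; the main technical obstacle is matching the exact numerical threshold $2(1+\sqrt{\sigma})^2/\sigma$, for which I expect a careful Young's-type split of the cross term $2\|u_j\|\|x_j - x_0\|$ (using the triangle inequality $\|x_j - x_0\| \le \|x_0 - x_j + u_j\| + \|u_j\|$) to be necessary, so that the resulting quadratic condition on $A_j$ factors as a perfect square in $\sqrt{\sigma}$.

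For the complexity bound, I would invoke the estimate $A_j \ge j^2/(4L)$ from Proposition~\ref{prop1:nesterov}(a), which shows that the condition $A_j \ge 2(1+\sqrt{\sigma})^2/\sigma$ is implied by $j^2/(4L) \ge 2(1+\sqrt{\sigma})^2/\sigma$, i.e., $j \ge 2\sqrt{2L}(1+\sqrt{\sigma})/\sqrt{\sigma}$. Thus, taking $j = \lceil 2\sqrt{2L}(1+\sqrt{\sigma})/\sqrt{\sigma}\rceil$ and combining \eqref{ineq:Nest_vksigma} with the inclusion $u_j \in \partial_{\eta_j}(\psi_s + \psi_n)(x_j)$ already supplied by Proposition~\ref{prop1:nesterov}(c) yields the desired triple $(x,u,\eta) = (x_j,u_j,\eta_j)$ within the claimed number of ACG iterations.
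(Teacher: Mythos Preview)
Your proposal is correct, and in fact your route is cleaner than the paper's. The key identity you derive,
\[
(A_j+1)\|u_j\|^2 + 2\eta_j + \|x_j-x_0\|^2 \le \|x_0-x_j+u_j\|^2,
\]
already does all the work: it immediately gives $\|u_j\|^2 \le \|x_0-x_j+u_j\|^2/(A_j+1)$ and, combined with $2A_j\eta_j \le \|x_j-x_0\|^2 \le \|x_0-x_j+u_j\|^2$, yields
\[
\|u_j\|^2 + 2\eta_j \le \left(\frac{1}{A_j+1} + \frac{1}{A_j}\right)\|x_0-x_j+u_j\|^2 < \frac{2}{A_j}\|x_0-x_j+u_j\|^2.
\]
So the threshold $A_j \ge 2/\sigma$ already suffices, and since $2(1+\sqrt{\sigma})^2/\sigma \ge 2/\sigma$, the stated hypothesis is more than enough. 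Your anticipated ``careful Young's-type split'' is not needed; you can drop that paragraph.

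By contrast, the paper first bounds $\|u_j\|^2 + 2\eta_j$ by $\max\{(2/A_j)^2, 2/A_j\}\|x_j-x_0\|^2$ via the crude estimate $(a+b)^2 \le 2(a^2+b^2)$, and only then relates $\|x_j-x_0\|^2$ to $\|x_0-x_j+u_j\|^2$ through a Young-type splitting $\|x_j-x_0\|^2 \le (1+\sqrt{\sigma})\|x_0-x_j+u_j\|^2 + (1+1/\sqrt{\sigma})\|u_j\|^2$, which is exactly what produces the $(1+\sqrt{\sigma})^2$ factor in the threshold. Your approach avoids this detour entirely because you substitute out the inner product rather than bounding it, and thereby get the sharper sufficient condition $A_j \ge 2/\sigma$. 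Your complexity argument via $A_j \ge j^2/(4L)$ matches the paper's.
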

\begin{proof} Using the triangle inequality for norms,   the relation $(a+b)^2\leq 2(a^2+b^2)$ for all $a, b \in \Re$, and  the inequality in \eqref{ineq:NestHPE}, we obtain 
 \begin{align*}
 \|u_j\|^2+2\eta_j 
 &\leq \max\{1/A_j^2,1/(2A_j)\}(\|A_ju_j\|^{2}+4A_j\eta_j)\\
 &\leq  \max\{1/A_j^2,1/(2A_j)\}(2\|A_ju_j+x_j-x_0\|^{2}+2\|x_j-x_0\|^{2}+4A_j\eta_j)\\
 &\leq  \max\{\left(2/A_j\right)^2,2/A_j\}\|x_j-x_0\|^{2} \leq   \frac{\sigma}{(1+\sqrt{\sigma})^2}\|x_j-x_0\|^{2}
\end{align*}
where the last inequality is due to $A_j\geq2(1+\sqrt{\sigma})^{2}/\sigma$.
On the other hand,  the triangle inequality for norms and  simple calculations  yield 
\begin{equation*}
\|x_j-x_0\|^{2}\leq(1+\sqrt{\sigma})\|x_0-x_j+u_j\|^2+\left(1+\frac{1}{\sqrt{\sigma}}\right)\|u_j\|^2.
\end{equation*} 
Combining the previous  estimates, we obtain
 \begin{equation}
 \|u_j\|^2+2\eta_j \leq \frac{\sigma}{1+\sqrt{\sigma}}\|x_0-x_j+u_j\|^2 + \frac{\sqrt{\sigma}}{1+\sqrt{\sigma}}\|u_j\|^2
 \end{equation}
 which easily implies   \eqref{ineq:Nest_vksigma}.  
Now if $j\geq \left\lceil2\sqrt{2L}(1+\sqrt{\sigma})/\sqrt{\sigma}\right\rceil
$  then it follows from \eqref{ineq:increasingA_k} that
$A_j\geq2(1+\sqrt{\sigma})^2/\sigma$ and hence, due to the first statement of the lemma,   \eqref{ineq:Nest_vksigma} holds. The last conclusion combined with the inclusion in \eqref{ineq:NestHPE} prove the last statement of the lemma.
\end{proof}

Note that Proposition~\ref{prop1:nesterov} and Lemma~\ref{lem:nest_complex} hold for any $\mu\ge 0$.
On the other hand, the next two results hold only for $\mu>0$ and 
derive some important relations satisfied by two distinct iterates of the ACG method.
They will be used later on in Subsection \ref{sec:AIPPmet} to analyze the refinement phase (step 3) of the 
AIPP method stated there.

\begin{lemma}\label{lem:Nest_prefut}
Let $\{(A_j,x_j,u_j,\eta_j)\}$ be generated by the  ACG method applied to \eqref{mainprob:nesterov}
where $(\psi_s,\psi_n)$ is a given pair of data functions satisfying (B1) and (B2) with $\mu>0$.  Then,
 \begin{equation}\label{ineq:Nestmu}
 \left(1-\frac{1}{\sqrt{A_j\mu}}\right)\|x^*-x_{0}\| \leq \|x_j-x_{0}\| \leq \left(1+\frac{1}{\sqrt{A_j\mu}}\right)\|x^*-x_{0}\|\qquad \forall j\geq 1,
\end{equation}
where $x^*$ is the unique solution of  \eqref{mainprob:nesterov}.
As a consequence, for all indices $i, j \ge 1$ such that $A_i\mu >1$, we have
 \begin{equation}\label{eq:prefut}
 \|x_j-x_0\| \leq 
 \left(\frac{1+\frac{1}{\sqrt{A_j\mu}}}{1-\frac{1}{\sqrt{A_i\mu}}}\right)\|x_i-x_0\|.
 \end{equation}
\end{lemma}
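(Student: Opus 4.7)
The plan is to chain together two standard ingredients: the function-value error bound from Proposition~\ref{prop1:nesterov}(b), and the quadratic lower bound on $\psi(x_j)-\psi(x^*)$ that comes from strong convexity. The key observation is that since $\psi_n$ is $\mu$-strongly convex and $\psi_s$ is convex, the full objective $\psi=\psi_s+\psi_n$ is also $\mu$-strongly convex, so $x^*$ is the unique minimizer and
\[
\psi(x_j)-\psi(x^*)\ge \frac{\mu}{2}\|x_j-x^*\|^2.
\]
Combining this with the bound $\psi(x_j)-\psi(x^*)\le \|x^*-x_0\|^2/(2A_j)$ from Proposition~\ref{prop1:nesterov}(b) yields the single quantitative inequality
\[
\|x_j-x^*\|\le \frac{1}{\sqrt{A_j\mu}}\,\|x^*-x_0\|.
\]

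From here, the first claim \eqref{ineq:Nestmu} is just the forward and reverse triangle inequalities applied to the decomposition $x_j-x_0=(x_j-x^*)+(x^*-x_0)$:
\[
\bigl|\,\|x_j-x_0\|-\|x^*-x_0\|\,\bigr|\le \|x_j-x^*\|\le \frac{1}{\sqrt{A_j\mu}}\|x^*-x_0\|,
\]
which immediately gives both the upper and lower bounds in \eqref{ineq:Nestmu}.

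For the consequence \eqref{eq:prefut}, the hypothesis $A_i\mu>1$ ensures $1-1/\sqrt{A_i\mu}>0$, so the lower bound in \eqref{ineq:Nestmu} applied at index $i$ can be solved for $\|x^*-x_0\|$ to give
\[
\|x^*-x_0\|\le \frac{1}{1-1/\sqrt{A_i\mu}}\,\|x_i-x_0\|,
\]
and then the upper bound in \eqref{ineq:Nestmu} at index $j$ yields \eqref{eq:prefut} by direct substitution. There is no real obstacle here — the main thing to be careful about is noting that strong convexity of $\psi_n$ transfers to $\psi$, and that the hypothesis $A_i\mu>1$ is exactly what is needed to make the denominator in \eqref{eq:prefut} positive so that the lower bound in \eqref{ineq:Nestmu} can be inverted.
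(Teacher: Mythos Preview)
Your proof is correct and follows essentially the same approach as the paper: combine strong convexity of $\psi$ with the function-value bound \eqref{eq:Nest_funct} to get $\|x_j-x^*\|\le \|x^*-x_0\|/\sqrt{A_j\mu}$, then apply the triangle inequality in both directions, and finally chain the lower bound at index $i$ with the upper bound at index $j$ for the consequence.
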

\begin{proof}
First note that condition (B1) combined with \eqref{mainprob:nesterov} imply that  $\psi$ is $\mu$-strongly convex. Hence,  it follows from \eqref{eq:Nest_funct} that 
$$
\frac{\mu}{2}\|x_j-x^*\|^2\leq \psi(x_{j})-\psi(x^*) \le  \frac{1}{2A_j}\|x^*-x_0\|^2
$$
and hence that
\begin{equation}\label{eq:auxnest00}
\|x_j-x^*\|\leq \frac{1}{\sqrt{ A_j\mu}}\|x^*-x_0\|.
\end{equation}
The inequalities
\[
\|x^*-x_0\|-\|x_j-x^*\|\leq\|x_j-x_0\|\leq  \|x_j-x^*\|+\|x^*-x_0\|,
\]
 which are due to the triangle inequality for norms,
together  with \eqref{eq:auxnest00} clearly implies \eqref{ineq:Nestmu}. The last statement of the lemma follows immediately from \eqref{ineq:Nestmu}. 
\end{proof}

As a consequence of Lemma \ref{lem:Nest_prefut}, the following result obtains several important relations
on certain quantities corresponding to two arbitrary iterates of
the ACG  method.

\begin{lemma}\label{prop_Nestxjxi}Let $\{(A_j,x_j,u_j,\eta_j)\}$ be generated by the ACG method applied to \eqref{mainprob:nesterov}
where $(\psi_s,\psi_n)$ is a given pair of data functions satisfying (B1) and (B2) with $\mu> 0$.  Let $i$ be an index such that $A_i\geq\max\{8,9/\mu\}$. Then,  for every $j\geq i$, we have
 \begin{equation}\label{eq:lemNestaaa}
 \|x_j-x_0\| \leq  2\|x_i-x_0\|,\quad \|u_j\|\leq \frac{4}{A_j}\|x_i-x_0\|, \quad
\eta_j\leq \frac{2}{A_j}\|x_i-x_0\|^2,
 \end{equation}
\begin{equation}\label{eq:corNest01}
\|x_0-x_j + u_j\|\leq \left(4+\frac{8}{A_j}\right)\|x_0-x_i+u_i\|,\qquad \eta_j\leq \frac{8\|x_0-x_i+u_i\|^2}{A_j}.
\end{equation}
\end{lemma}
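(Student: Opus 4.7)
The plan is to derive \eqref{eq:lemNestaaa} and \eqref{eq:corNest01} essentially by combining Lemma~\ref{lem:Nest_prefut} with the master estimate \eqref{ineq:NestHPE}, and by then translating bounds involving $\|x_i-x_0\|$ into bounds involving $\|x_0-x_i+u_i\|$ via the triangle inequality. The only non-routine ingredient is the previously established contraction-type bound \eqref{eq:prefut}; everything else is elementary, so the main obstacle is really just careful bookkeeping of constants.

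First I would prove the leftmost inequality in \eqref{eq:lemNestaaa}. Since $j\ge i$ and the sequence $\{A_j\}$ is nondecreasing by construction, $A_j\ge A_i\ge 9/\mu$, so $\sqrt{A_j\mu}\ge 3$ and $\sqrt{A_i\mu}\ge 3$. Plugging these into \eqref{eq:prefut} gives
\[
\|x_j-x_0\|\le \frac{1+1/3}{1-1/3}\,\|x_i-x_0\|=2\|x_i-x_0\|,
\]
which is the first inequality. Next I would invoke \eqref{ineq:NestHPE}: dropping the $2A_j\eta_j$ term yields $\|A_ju_j+x_j-x_0\|\le \|x_j-x_0\|$, so by the triangle inequality $\|A_ju_j\|\le 2\|x_j-x_0\|\le 4\|x_i-x_0\|$, which is the bound on $\|u_j\|$. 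Dropping instead the first term in \eqref{ineq:NestHPE} gives $2A_j\eta_j\le \|x_j-x_0\|^2\le 4\|x_i-x_0\|^2$, which yields the bound on $\eta_j$. This completes \eqref{eq:lemNestaaa}.

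For \eqref{eq:corNest01} I would first derive a one-sided bound $\|x_i-x_0\|\le C\|x_0-x_i+u_i\|$. Applying \eqref{ineq:NestHPE} with $j=i$ gives $\|A_iu_i\|\le 2\|x_i-x_0\|$, hence $\|u_i\|\le (2/A_i)\|x_i-x_0\|$, and the triangle inequality gives
\[
\|x_i-x_0\|\le \|x_0-x_i+u_i\|+\|u_i\|\le \|x_0-x_i+u_i\|+\tfrac{2}{A_i}\|x_i-x_0\|.
\]
Since $A_i\ge 8$, we have $2/A_i\le 1/4$, so $\|x_i-x_0\|\le (4/3)\|x_0-x_i+u_i\|\le 2\|x_0-x_i+u_i\|$. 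Combining this with the bounds already obtained in \eqref{eq:lemNestaaa},
\[
\|x_0-x_j+u_j\|\le \|x_0-x_j\|+\|u_j\|\le \Bigl(2+\tfrac{4}{A_j}\Bigr)\|x_i-x_0\|\le \Bigl(4+\tfrac{8}{A_j}\Bigr)\|x_0-x_i+u_i\|,
\]
giving the first inequality of \eqref{eq:corNest01}; analogously
\[
\eta_j\le \tfrac{2}{A_j}\|x_i-x_0\|^2\le \tfrac{2}{A_j}\cdot 4\|x_0-x_i+u_i\|^2=\tfrac{8}{A_j}\|x_0-x_i+u_i\|^2,
\]
which is the second. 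No substantive obstacle arises; the only delicate point is ensuring that the hypothesis $A_i\ge \max\{8,9/\mu\}$ is used in the two places where it is needed (for the contraction \eqref{eq:prefut} via $9/\mu$, and for the inversion $\|x_i-x_0\|\le 2\|x_0-x_i+u_i\|$ via the threshold $8$).
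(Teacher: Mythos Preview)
Your proof is correct and follows essentially the same route as the paper's: both combine \eqref{eq:prefut} with the estimate \eqref{ineq:NestHPE} to obtain \eqref{eq:lemNestaaa}, and then pass from $\|x_i-x_0\|$ to $\|x_0-x_i+u_i\|$ via the triangle inequality to get \eqref{eq:corNest01}. The only cosmetic difference is that the paper invokes Lemma~\ref{lem:nest_complex} with $\sigma=1$ (using $A_i\ge 8=2(1+\sqrt{1})^2$) to conclude $\|u_i\|\le \|x_0-x_i+u_i\|$ and hence $\|x_i-x_0\|\le 2\|x_0-x_i+u_i\|$, whereas you reach the same bound more directly from $\|u_i\|\le (2/A_i)\|x_i-x_0\|$ and $A_i\ge 8$.
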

\begin{proof}
 The first inequality in \eqref{eq:lemNestaaa} follows from \eqref{eq:prefut} and the assumption that
  $A_i\mu\geq 9$. Now, using the inequality in \eqref{ineq:NestHPE} and the triangle inequality for norms, we easily see that
 $$
 \|u_j\|\leq\frac{2}{A_j}\|x_j-x_0\|,\quad \eta_j\leq\frac{1}{2A_j}\|x_j-x_0\|^2
 $$
 which, combined with the first inequality in  \eqref{eq:lemNestaaa},
 prove the second and the third inequalities in \eqref{eq:lemNestaaa}. 
Noting that $A_i\geq 8$ by assumption,
 Lemma \ref{lem:nest_complex} implies that \eqref{ineq:Nest_vksigma} holds with $\sigma=1$ and $j=i$, and hence that
%%\eqref{ineq:Nest_vksigma} with $\sigma=1$ yields 
  \begin{equation}\label{eq:lemNestbbb}
  \|u_i\|\leq \|x_0-x_i+ u_i\|.
  \end{equation}  
Using the triangle inequality, the first two inequalities in \eqref{eq:lemNestaaa} and relation \eqref{eq:lemNestbbb}, we
conclude that
\begin{align*}
\|x_0-x_j + u_j\|&\leq \|x_0-x_j\|+\|u_j\|\leq \left(2+\frac{4}{A_j}\right)\|x_0-x_i\|\nonumber\\
&\leq \left(2+\frac{4}{A_j}\right)\left(\|x_0-x_i+ u_i\|+\|u_i\|\right)\leq\left(4+\frac{8}{A_j}\right)\|x_0-x_i+ u_i\|,
\end{align*}
and that the first inequality in \eqref{eq:corNest01} holds.
Now, the last inequality in \eqref{eq:lemNestaaa}, combined with the triangle inequality for norms and
the relation $(a+b)^2\leq 2(a^2+b^2)$, imply that
%%\eqref{eq:AIPPM_hpe} we see that
$$
\eta_j\leq \frac{2}{A_j}\|x_0-x_i\|^2\leq \frac{4}{A_j}\left(\|x_0-x_i+u_i\|^2+\|u_i\|^2\right).
$$
Hence,  in view of \eqref{eq:lemNestbbb},  the last inequality in \eqref{eq:corNest01} follows.
\end{proof}

%%%%%%%%%%%%%%%%%%%%%%%%%%%%%%%%%%%%%%%%%%%
%%%%%%%%%%%%%%%%%%%%%%%%%%%%%%%%%%%%%%%%%%%

\subsection{The AIPP method}\label{sec:AIPPmet}
This subsection introduces and analyzes the AIPP method to compute approximate solutions of the CNO problem \eqref{eq:PenProb2Intro}.
The main results of this subsection are Theorem~\ref{th:AIPPcomplexity} and Corollary~\ref{cor:AIPPref2}
which analyze the iteration-complexity of the AIPP method to obtain  approximate solutions of the CNO problem in the sense 
of  \eqref{eq:ref4'} and \eqref{eq:ref4'''},  respectively.

We  start by stating  the AIPP method.

\noindent\rule[0.5ex]{1\columnwidth}{1pt}

AIPP Method

\noindent\rule[0.5ex]{1\columnwidth}{1pt}
\begin{itemize}
\item [(0)] Let $z_0 \in \dom h $,  $\sigma \in (0,1)$, a pair $(m,M)$ satisfying  \eqref{ineq:concavity_g}, a scalar  $0 < \lambda\leq 1/(2m)$ and a  tolerance pair  $(\bar\rho,\bar\varepsilon) \in \Re^2_{++}$ be given,  and set  $k=1$; 
\item [(1)] perform at least $\left\lceil 6\sqrt{2\lambda M+1}\right\rceil$ iterations of the ACG method
started from $z_{k-1}$
and with 
\begin{equation}\label{psi_AIPP}
\psi_s=\psi_s^k:=\lambda g+\frac{1}{4}\|\cdot-z_{k-1}\|^2, \quad \psi_n=\psi_n^k:=\lam h +\frac{1}{4}\|\cdot-z_{k-1}\|^2
\end{equation}
to obtain a triple  $(x,u,\eta) \in \Re^n \times \Re^n \times \Re_+$ satisfying
\begin{equation}\label{eq:AIPPM_hpe}
u \in \partial_{\eta} \left(\lambda (g+h)+\frac{1}{2}\|\cdot-z_{k-1}\|^2\right)(x),
\quad \|u\|^2+2\eta \leq \sigma\|z_{k-1}-x+ u \|^2;
\end{equation}
\item [(2)] if
\begin{equation}\label{ineq:stopAIPPM}
\|z_{k-1}-x+u\| \leq \frac{\lambda \bar \rho}{5},
\end{equation}
then go to (3); otherwise  set $(z_k,\tilde v_k,\tilde \varepsilon_k)=(x,u,\eta)$, $k \leftarrow k+1$ and go to (1);
\item[(3)]
restart the previous call to the ACG method in step 1 to find an iterate
$(\tx,\tilde u,\tilde \eta)$ satisfying \eqref{eq:AIPPM_hpe} with $(x,u,\eta)$ replaced by $(\tx,\tilde u,\tilde \eta)$ and the extra condition
\begin{equation} \label{eq:newcri1}
% \left \| \frac{z_{k-1}-z}{\lam} + v \right\| \le \rho_0, \quad 
\tilde \eta/\lam  \le \bar\varepsilon
\end{equation}
and set $(z_k,\tilde v_k,\tilde \varepsilon_k)=(\tx,\tilde u,\tilde \eta)$; finally,
output $(\lambda,z^-,z,w,\varepsilon)$ where $$(z^-,z,w,\varepsilon)=(z_{k-1}, z_k,\tv_k/\lambda, \tilde \varepsilon_k /\lambda).$$
\end{itemize}
\rule[0.5ex]{1\columnwidth}{1pt}

%\noindent\rule[0.5ex]{1\columnwidth}{1pt}

Some comments about the AIPP method are in order.  First, 
the ACG iterations performed in steps  1 and 3 are referred to as the inner iterations of the AIPP method.
Second, in view of the last statement of  Lemma~\ref{lem:nest_complex} with $(\psi_s,\psi_n)$ given by \eqref{psi_AIPP}, the ACG method obtains a triple
$(x,u,\eta)$ satisfying
 \eqref{eq:AIPPM_hpe}.
Observe that Proposition \ref{prop1:nesterov}(c) implies that every triple $(x,u,\eta)$  generated by the ACG method
satisfies the inclusion in \eqref{eq:AIPPM_hpe} and hence only the inequality in  \eqref{eq:AIPPM_hpe} needs to be checked for termination.
Third, the consecutive loops consisting of steps 1 and 2 (or, steps 1, 2 and 3 in the last loop) are referred to as the outer iterations of the AIPP method.
In view of \eqref{eq:AIPPM_hpe}, they can be viewed as iterations of the GIPP framework 
applied to the CNO problem \eqref{eq:PenProb2Intro}. 
Fourth, instead of running the ACG method by at least the constant number of iterations described
in step 1, one could run the more practical variant which stops  (usually, much earlier) whenever the second inequality in \eqref{eq:AIPPM_hpe} is satisfied.
We omit the tedious analysis and more complicated description of this AIPP variant, but remark that its iteration complexity is the same as
the one studied in this subsection.
Finally, the last loop supplements steps 1 and 2 with step 3 whose
goal is to obtain a triple $(\tx,\tilde u,\tilde \eta)$ with a possibly smaller $\tilde \eta$ while
preserving the quality of the quantity $\|z_{k-1}-\tilde x+\tilde u\|$ which
at its start is bounded by $\lambda\bar\rho/5$ and, throughout its inner iterations, can be shown to be bounded by $\lambda\bar\rho$
(see the first inequality in \eqref{eq:lemAIPPestimates}).

%can be  viewed as a final refinement stage towards obtaining a triple $(\tilde x,\tilde u,\tilde \eta)$ satisfying 
%\begin{equation}\label{eq:inc_refinement}
%\tilde u\in \partial_{\tilde \eta} \left(\lambda(g+h)+\frac{1}{2}\|\cdot-z_{k-1}\|^2\right)(\tilde x)
%\end{equation}
%and improving the quality of $\tilde \eta$ while preserving the quality of the quantity $\|z_{k-1}-\tilde x+\tilde u\|$ which
%at its start is bounded by $\lambda\bar\rho/5$ and, throughout its inner iterations, can be shown to be bounded by $\lambda\bar\rho$
%(see the first inequality in \eqref{eq:lemAIPPestimates}).

The next proposition summarizes some facts about the  AIPP method.
\begin{lemma}\label{prop:AIPPmethod}
The following statements about the AIPP method hold:
\begin{itemize}
\item[(a)]
at every outer iteration, the call to the ACG method in step 1 finds a triple
$(x,u,\eta)$ satisfying \eqref{eq:AIPPM_hpe} in at most
\begin{equation}\label{eq:AIPPcomplexinner}
% K_{I}:=
\left\lceil \max\left\{\frac{2(1+\sqrt{\sigma})}{\sqrt{\sigma}},6\right\}\sqrt{2\lam M+1}\right\rceil
\end{equation}
inner iterations;
\item[(b)]
at the last outer  iteration, the extra number of ACG iterations to obtain the triple $(\tx,\tv,\tilde \eta)$ is bounded by
\begin{equation}\label{eq:lemAIPPcomplex3}
\left\lceil   2\sqrt{2\lambda M+1} \,  \log^+_1\left(\frac{2\bar\rho\sqrt{(2\lambda M+1)\lambda}}{5\sqrt{\bar\varepsilon}}\right) +1 \right\rceil;
\end{equation}
%%\[
%%\left\lceil\frac{\log^+\left(\frac{\sqrt{M+m}\bar\rho}{5m\sqrt{m\bar\varepsilon}}\right)}{\log\left(1+\frac{\sqrt{m}}{2\sqrt{M+m}}\right)}+1\right\rceil;
%%\]
\item[(c)]
%every outer iteration is a special realization of an iteration of the  GIPP framework
%with $\lambda_k=\lambda$  for every $k \ge 1$, and hence 
the AIPP method is a special implementation of the GIPP framework in which $\lambda_k=\lambda$  for every $k \ge 1$;  
\item[(d)]
the number of outer iterations performed by the AIPP method is bounded by 
\begin{equation}\label{def:T_p}
 \left\lceil\frac{25R(\phi;\lambda)}{(1-\sigma)^2\lambda^2{\bar\rho}^2}+1\right\rceil
\end{equation}
where $R(\cdot;\cdot)$ is as defined in \eqref{eq:def_Rphi}.
%d_0:= \inf\{\|z_0-z^*\|:z^*\; {\rm is\; a\; solution\; of}\; \eqref{eq:PenProb2Intro}\}.$
\end{itemize}
\end{lemma}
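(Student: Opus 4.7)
The plan is to prove the four parts in the order (c), (a), (d), (b), since (c) supplies the GIPP link needed in (d), and (a) underlies (b).

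Part~(c) is a direct comparison of templates. Whenever AIPP sets $(z_k,\tilde v_k,\tilde\varepsilon_k)$, the triple is either the $(x,u,\eta)$ produced in step~1 or the $(\tilde x,\tilde u,\tilde\eta)$ produced in step~3, and in both cases \eqref{eq:AIPPM_hpe} coincides exactly with the GIPP inclusion \eqref{inclusion:GIPPF} and error inequality \eqref{eq:errocritGIPP} when one takes $\phi=g+h$ and $\lambda_k=\lambda$ for every $k$. Thus every outer loop of AIPP is a legitimate GIPP iteration with constant prox-stepsize, and all results of Subsection~\ref{subsec:GIPP} apply.

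For~(a), I would verify that the splitting \eqref{psi_AIPP} meets hypotheses (B1)--(B2). The quadratic $\tfrac{1}{4}\|\cdot-z_{k-1}\|^2$ is placed in $\psi_s^k$ precisely to neutralize the lower curvature of $\lambda g$: since $\lambda\le 1/(2m)$, $\lambda g$ has lower curvature at most $1/2$, which is matched by the convexity modulus $1/2$ of the added quadratic, so $\psi_s^k$ is convex. Summing upper curvatures yields the Lipschitz constant $L=\lambda M+1/2$, hence $2L=2\lambda M+1$. Putting the same quadratic in $\psi_n^k$ also makes it $(1/2)$-strongly convex, so $\mu=1/2$. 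Lemma~\ref{lem:nest_complex} then delivers a triple satisfying \eqref{eq:AIPPM_hpe} in at most $\lceil 2\sqrt{2\lambda M+1}\,(1+\sqrt\sigma)/\sqrt\sigma\rceil$ inner iterations; taking the maximum with the mandatory $\lceil 6\sqrt{2\lambda M+1}\rceil$ imposed in step~1 yields \eqref{eq:AIPPcomplexinner}.

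For~(d), I would apply Corollary~\ref{proprasc1}(b) to the outer sequence, which by~(c) is a constant-stepsize GIPP sequence. The stopping criterion \eqref{ineq:stopAIPPM} is $\|z_{k-1}-x+u\|\le\lambda(\bar\rho/5)$, so feeding the effective tolerance $\bar\rho/5$ into that corollary directly produces the bound \eqref{def:T_p} after routine algebraic simplification.

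Part~(b) is the main obstacle and leans on the strongly convex ACG refinement machinery of Lemma~\ref{prop_Nestxjxi}. Let $i$ denote the ACG index at which step~2 first triggers, so $\|x_0-x_i+u_i\|\le\lambda\bar\rho/5$. I would first certify the admissibility condition $A_i\ge\max\{8,9/\mu\}=18$ of Lemma~\ref{prop_Nestxjxi} using the polynomial bound $A_i\ge i^2/(4L)$ in \eqref{ineq:increasingA_k} together with $i\ge 6\sqrt{2\lambda M+1}$, which yields exactly $A_i\ge 18$. Lemma~\ref{prop_Nestxjxi} then gives, for all continued iterates $j\ge i$, both $\|x_0-x_j+u_j\|\le(4+8/A_j)\|x_0-x_i+u_i\|\le\lambda\bar\rho$, which preserves the outer-error quality across step~3, and $\eta_j\le 8\|x_0-x_i+u_i\|^2/A_j\le 8\lambda^2\bar\rho^2/(25 A_j)$. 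The new requirement $\tilde\eta\le\lambda\bar\varepsilon$ therefore reduces to requiring $A_j\ge 8\lambda\bar\rho^2/(25\bar\varepsilon)$. Substituting the exponential lower bound $A_j\ge L^{-1}(1+\sqrt{\mu/(4L)})^{2(j-1)}$ from \eqref{ineq:increasingA_k}, noting $\sqrt{\mu/(4L)}=1/(2\sqrt{2\lambda M+1})\le 1/2$, and using the elementary inequality $\log(1+x)\ge x/2$ on $[0,1]$, I would solve for $j$ to obtain the bound \eqref{eq:lemAIPPcomplex3}. The switch from $\log$ to $\log_1^+$ accounts for the regime where the threshold is already essentially met at $j=i$, so that at least one (but not many more) ACG step is counted. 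The delicate bookkeeping here---matching the constants $8/25$ and $2\sqrt{2\lambda M+1}$---is where the calculation requires most care.
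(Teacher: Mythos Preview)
Your proposal is correct and follows essentially the same approach as the paper's own proof: part~(a) via Lemma~\ref{lem:nest_complex} with $L=\lambda M+1/2$ and the mandated minimum iteration count; part~(c) by direct identification with the GIPP template; part~(d) by Corollary~\ref{proprasc1}(b) with tolerance $\bar\rho/5$; and part~(b) via Lemma~\ref{prop_Nestxjxi} after verifying $A_i\ge 18$ from $i\ge 6\sqrt{2\lambda M+1}$ and then inverting the exponential lower bound on $A_j$ using $\log(1+t)\ge t/2$. Your verification of the convexity of $\psi_s^k$ (needed for (B2)) is in fact slightly more explicit than the paper's.
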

\begin{proof} 
(a)  First note that the function $\psi_s$ defined in \eqref{psi_AIPP} satisfies condition (B2) of Subsection~\ref{subsec:Nesterov's-Method}
with  $L= \lambda M+1/2$,  in view of  \eqref{ineq:concavity_g}. Hence, it follows from the last statement of Lemma~\ref{lem:nest_complex} that the ACG method  obtains a triple $(x,u,\eta)$ satisfying
\eqref{eq:AIPPM_hpe} in at most  
 \begin{equation}\label{eq:thAIPPa}
\left\lceil\frac{2\sqrt{2}(1+\sqrt{\sigma})}{\sqrt{\sigma}}\sqrt{\lambda M +1/2}\right\rceil
\end{equation}
inner iterations.
Hence, (a) follows from the above conclusion and the fact that the ACG method performs at least  $\left\lceil 6\sqrt{2\lambda M+1)}\right\rceil$ inner iterations, in view of  step~1.

(b)   Consider the triple $(x,u,\eta)$ obtained in step 1 during the last outer iteration of 
the AIPP method.
%which satisfies condition \eqref{ineq:stopAIPPM} just before the method moves to step 3.
In view of step 1, there exists an index $i \ge \left\lceil 6\sqrt{2\lambda M+1}\right\rceil$ such that
$(x,u,\eta)$ is the $i$-iterate of the ACG method started from $x_0=z_{k-1}$ applied to problem \eqref{mainprob:nesterov} with $\psi_s$ and $\psi_n$ as in \eqref{psi_AIPP}. Noting that  the functions  $\psi_n$ and $\psi_s$ satisfy conditions (B1) and  (B2)
of Subsection~\ref{subsec:Nesterov's-Method} with  $\mu=1/2$ and $L= \lambda M+1/2$ (see \eqref{ineq:concavity_g}) and using
the above inequality on the index $i$ and relation \eqref{ineq:increasingA_k},
we conclude that $A_i\geq 18=\max\{8,9/\mu\}$, and hence that $i$ satisfies  the assumption of Lemma~\ref{prop_Nestxjxi}. It then follows from 
\eqref{eq:corNest01},  \eqref{ineq:stopAIPPM}  and \eqref{ineq:increasingA_k} that the continuation of the ACG method as in step~3 generates a  triple  $(\tilde x,\tilde u,\tilde \eta)=(x_j,u_j,\eta_j)$ satisfying %uch that $(z,v,\varepsilon):=(x_j,u_j/\lambda,\eta_j/\lambda)$ satisfies
\begin{equation}\label{eq:lemAIPPestimates}
\|z_{k-1}-\tilde x + \tilde u\|\leq \left(4+\frac{8}{A_j}\right)\frac{\lambda \bar\rho}{5}\leq \lambda \bar\rho,\qquad 
 \tilde \eta\leq \frac{8\lambda^2 \bar{\rho}^2}{25A_j}\leq \frac{8L\lambda^2\bar\rho^2}{25\left(1+\sqrt{\frac{\mu}{4L}}\right)^{2(j-1)}}.
\end{equation}
Noting the stopping criterion~\eqref{eq:newcri1} and using the last inequality above, the fact that $\mu=1/2$ and  $L= \lambda M+1/2$, and the relation
that $\log(1+t)\geq t/2$ for all $t \in [0,1]$, we can easily see that (b) holds. 
%it is now easy to see that (b) follows.
% that the extra number of inner iterations performed number of inner iterations during step~3 to satisfy  the stopping criterion~\eqref{eq:newcri1} is bounded by
%$$
%\left\lceil\frac{\log^+\left(\frac{2\sqrt{2L}\lambda\bar\rho}{5\sqrt{\bar\varepsilon}}\right)}{\log\left(1+\sqrt{\frac{\mu}{4L}}\right)} +1\right\rceil
%$$
%which in turn, using $\mu=1/2$ and  $L= \lambda M+1/2$, implies that \eqref{eq:lemAIPPcomplex3} holds;
%\begin{equation}
%\left\lceil\frac{\log^+\left(\frac{\sqrt{M+m}\bar\rho}{5m\sqrt{m\bar\varepsilon}}\right)}{\log\left(1+\frac{\sqrt{m}}{2\sqrt{M+m}}\right)}+1\right\rceil.
%\end{equation}
%Now, since   $(z^-,z,w,\varepsilon)=(z_{k-1},\tilde x,\tilde u/\lambda,\tilde \eta/\lambda)$,  the inequalities in \eqref{eq:ref4'} follow from   the first inequality in \eqref{eq:lemAIPPestimates} and the stopping criterion \eqref{eq:newcri1}.

(c) This statement is obvious.

(d) This statement follows by combining (c),  the stopping criterion \eqref{ineq:stopAIPPM}, and Corollary~\ref{proprasc1}(b) with $\bar{\rho}$ replaced by $\bar{\rho}/5$.
\end{proof}

% with ??????
%the number of outer iterations performed by the AIPP method is bounded by
% \eqref{eq:corGIPP_complex2} holds which combined with criterion \eqref{ineq:stopAIPPM} proves (d).
\vgap
Next we state one of our main results of this paper which derives the iteration-complexity of the AIPP method to obtain prox-approximate solutions of  the CNO problem in the sense of~\eqref{eq:ref4'}. 
Recall that the AIPP method assumes that $\lambda \le 1/(2m)$.

%From this result, will follow immediately  two corollaries which show how to obtain approximate solutions in the sense of \eqref{eq:ref4''} and \eqref{eq:ref4'''}.
%??????
%\begin{theorem}\label{th:AIPPcomplexity} The AIPP method terminates with a quintuple  $(\lambda,z^-,z,w,\varepsilon)$ satisfying  criterion \eqref{eq:ref4'} within
%\begin{equation}\label{eq:thcomplexAIPP}
%K_IK_O+K_E\end{equation}
%inner iterations where $K_I,K_E$ and $K_O$ are as in \eqref{eq:AIPPcomplexinner}, \eqref{eq:lemAIPPcomplex3} and \eqref{def:T_p}, respectively.
% \end{theorem}
% \frac{\sqrt{M+m}\tilde{\Delta}^0_\phi }{m^{3/2}\rho_0^2}
\begin{theorem}\label{th:AIPPcomplexity}
Under assumptions (A1)-(A3),
the AIPP method terminates with a prox-solution  $(\lambda,z^-,z,w,\varepsilon)$ %in the sense of \eqref{eq:ref4'}
 within
\begin{equation}\label{eq:lastboundAIPP}
\mathcal{O}\left\{ \sqrt{\lam M+1}\left[ \frac{R(\phi;\lambda)}{\lam^2\bar{\rho}^2} + \log^+_1\left(\frac{\bar\rho \sqrt{(\lambda M+1)\lambda}}{\sqrt{\bar\varepsilon}}\right) \right]
\right\}
\end{equation}
inner iterations where $R(\cdot;\cdot)$ is as defined in \eqref{eq:def_Rphi}.
\end{theorem}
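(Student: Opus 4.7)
The statement is essentially a bookkeeping consequence of Lemma~\ref{prop:AIPPmethod}, so the plan breaks into two tasks: (i) verify that the quintuple $(\lambda,z^-,z,w,\varepsilon)$ returned in step~3 genuinely satisfies the prox-approximate solution definition \eqref{eq:ref4'}, and (ii) aggregate the per-outer-iteration ACG costs from parts (a), (b), (d) of Lemma~\ref{prop:AIPPmethod} to obtain the bound \eqref{eq:lastboundAIPP}.

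\textbf{Step 1: the output is a $(\bar\rho,\bar\varepsilon)$-prox-approximate solution.} When step~3 terminates it produces $(\tx,\tilde u,\tilde\eta)=(z_k,\tv_k,\tilde\varepsilon_k)$ satisfying the refined version of \eqref{eq:AIPPM_hpe} plus \eqref{eq:newcri1}. The inclusion in \eqref{eq:AIPPM_hpe} reads $\tv_k\in\partial_{\tilde\varepsilon_k}(\lam\phi+\tfrac12\|\cdot-z_{k-1}\|^2)(z_k)$ (with $\phi=g+h$); dividing through by $\lam$ and using the elementary fact that $u\in\partial_\eta(\lam F)(x)$ implies $u/\lam\in\partial_{\eta/\lam}F(x)$ yields $w\in\partial_\varepsilon(\phi+\tfrac{1}{2\lam}\|\cdot-z^-\|^2)(z)$, which is the inclusion in \eqref{eq:ref4'}. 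The first inequality in \eqref{eq:lemAIPPestimates} gives $\|z_{k-1}-\tx+\tilde u\|\le\lam\bar\rho$, and dividing by $\lam$ produces $\|(z^--z)/\lam+w\|\le\bar\rho$. Finally \eqref{eq:newcri1} is exactly $\varepsilon=\tilde\eta/\lam\le\bar\varepsilon$.

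\textbf{Step 2: total inner iteration count.} Every outer loop incurs at most the constant number of ACG steps in \eqref{eq:AIPPcomplexinner}, namely $\mathcal{O}(\sqrt{\lam M+1})$; the number of outer loops is bounded by \eqref{def:T_p}, namely $\mathcal{O}(R(\phi;\lam)/(\lam^2\bar\rho^2))$; and the final refinement step~3 contributes the extra budget in \eqref{eq:lemAIPPcomplex3}, namely $\mathcal{O}\bigl(\sqrt{\lam M+1}\,\log_1^+(\bar\rho\sqrt{(\lam M+1)\lam}/\sqrt{\bar\varepsilon})\bigr)$. Multiplying the per-outer cost by the number of outer iterations and adding the refinement term, the two contributions share a common factor $\sqrt{\lam M+1}$ and combine into \eqref{eq:lastboundAIPP}.

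\textbf{Expected difficulties.} There is no real obstacle since Lemma~\ref{prop:AIPPmethod} has already absorbed all the technical content: (a) exploits the smoothness parameter $L=\lam M+1/2$ of $\psi_s^k$ together with Lemma~\ref{lem:nest_complex}; (b) uses Lemma~\ref{prop_Nestxjxi} to show the refinement phase shrinks $\tilde\eta$ geometrically without enlarging $\|z_{k-1}-\tx+\tilde u\|$ beyond $\lam\bar\rho$; and (d) is just Corollary~\ref{proprasc1}(b) applied with tolerance $\bar\rho/5$. The only point that deserves care is the passage from the inclusion at ``scale $\lam$'' in \eqref{eq:AIPPM_hpe} to the inclusion at ``scale $1$'' in \eqref{eq:ref4'}, and from $\tv_k,\tilde\varepsilon_k$ to $w=\tv_k/\lam,\varepsilon=\tilde\varepsilon_k/\lam$; this is handled in Step~1 above by the simple scaling property of the $\varepsilon$-subdifferential recalled in \eqref{eq:epsubdiff}.
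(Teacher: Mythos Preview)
Your proposal is correct and follows essentially the same approach as the paper: invoke Lemma~\ref{prop:AIPPmethod} and combine (a)$\times$(d)$+$(b) to obtain \eqref{eq:lastboundAIPP}. Your Step~1 is in fact slightly more careful than the paper's own proof, which only explicitly records that the output satisfies the inclusion in \eqref{eq:ref4'} and leaves the two inequalities implicit (they are buried in \eqref{eq:lemAIPPestimates} and \eqref{eq:newcri1}, exactly as you spell out).
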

\begin{proof}
It follows from the second statement following the AIPP method and the definition of $(\lambda,z^-,z, w,\varepsilon)$ in step~3 that the quintuple  $(\lambda,z^-,z, w,\varepsilon)$  satisfies the inclusion in \eqref{eq:ref4'}. Now, the bound in \eqref{eq:lastboundAIPP} follows by multiplying the bounds in Lemma~\ref{eq:AIPPcomplexinner}(a) and (b), and  adding the result to the bound in Lemma~\ref{eq:AIPPcomplexinner}(d).\end{proof}

Before stating the next result, we make two remarks about the above result. 
First, even though our main interest is in the case where $m \le M$ (see assumption (A.2)), bound \eqref{eq:lastboundAIPP} also holds for the case in which $m>M$.
Second, the AIPP version in which $\lambda = 1/(2m)$ yields the
the best complexity bound under the reasonable assumption that, inside the squared bracket in \eqref{eq:lastboundAIPP}, the first term is larger than the second one.

The following result describes the  inner iteration complexity of the AIPP method with $\lambda = 1/(2m)$
to compute approximate solutions of \eqref{eq:PenProb2Intro}  in the sense of \eqref{eq:ref4'''}.% A similar result to compute approximate solutions in the sense of \eqref{eq:ref4''} can be obtained, however,  the details are omitted for the sake of brevity. 

\begin{corollary}\label{cor:AIPPref2} Assume that (A1)-(A3) hold and let a tolerance $\hat\rho>0$ be given. Also, let $(\lambda,z^-,z,w,\varepsilon)$
be the output obtained by the  AIPP method with inputs  $\lambda=1/(2m)$ and $(\bar\rho,\bar\varepsilon)$ defined as
\begin{equation}\label{eq:cor_complex1}
\quad (\bar\rho, \bar \varepsilon) := \left( \frac{\hat\rho}{4} \, ,  \frac{\hat\rho^2}{32(M+2m)} \right). % \bar\rho:=\frac{\hat \rho}{4} \quad \mbox{and}\quad \bar\varepsilon:= \frac{\hat \rho^2}{32(M+2m)}.
\end{equation}
Then the following statements hold:
\begin{itemize}
\item[(a)] the AIPP method  terminates in at most
\begin{equation}\label{auxcomplex00}
\mathcal{O}\left\{\sqrt{\frac{M}{m} }\left(\frac{m^2 R(\phi;\lambda)} {\hat \rho^2}
+  \log^{+}_1\left(\frac{M}{m}\right)\right)\right\}
\end{equation}
inner iterations where $R(\cdot;\cdot)$ is  as in \eqref{eq:def_Rphi}.
%\begin{equation}\label{eq:def:RphiAIPP}
%\bar{R}_\phi:= \inf \left \{ \frac12 \|z_0-u\|^2 + \frac{1-\sigma}{2m} [\phi(u) - \phi_*] : u \in \R^n \right \}.
%\end{equation}
\item[(b)]
if  $\nabla g$ is $M$-Lipschitz continuous, then the pair  $(\hat z, \hat v)=(z_g,v_g)$ computed  according to  \eqref{eq:def_zg} and \eqref{eq:def_vg}  
is a $\hat{\rho}$-approximate solution of \eqref{eq:PenProb2Intro}, i.e., \eqref{eq:ref4'''} holds. 
\end{itemize}
\end{corollary}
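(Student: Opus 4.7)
The plan is to derive both statements by direct substitution of the choices $\lambda = 1/(2m)$, $\bar\rho = \hat\rho/4$, and $\bar\varepsilon = \hat\rho^2/[32(M+2m)]$ into the inner-iteration bound of Theorem~\ref{th:AIPPcomplexity} and into the refinement estimate of Proposition~\ref{prop:refapproxsol}(c). These particular values are not arbitrary: they are engineered so that (i) in the refinement bound, the contributions coming from $\bar\rho$ and from $\sqrt{2\bar\varepsilon(M+\lambda^{-1})}$ are equal, each being $\hat\rho/4$; and (ii) in the logarithmic factor of Theorem~\ref{th:AIPPcomplexity}, the argument collapses to something of order $M/m$.

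For part (a), the first step is to observe that $\lambda = 1/(2m)$ together with assumption (A2) gives $\lambda M + 1 \le 3M/(2m)$, so $\sqrt{\lambda M + 1} = \mathcal{O}(\sqrt{M/m})$. Next, $\lambda^2\bar\rho^2 = \hat\rho^2/(64m^2)$ converts the leading bracketed term in \eqref{eq:lastboundAIPP} into a constant multiple of $m^2 R(\phi;\lambda)/\hat\rho^2$. For the logarithm, the identities $\bar\rho/\sqrt{\bar\varepsilon} = \sqrt{2(M+2m)}$ and $(\lambda M + 1)\lambda \le 3M/(4m^2)$ show that the argument of $\log^+_1$ inside \eqref{eq:lastboundAIPP} is $\mathcal{O}(M/m)$. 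Assembling these estimates yields \eqref{auxcomplex00}.

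For part (b), the key observation is that the quintuple $(\lambda,z^-,z,w,\varepsilon)$ returned by step~3 is in fact a $(\bar\rho,\bar\varepsilon)$-prox-approximate solution of \eqref{eq:PenProb2Intro}. Using the rescaling performed in step~3 (equivalently, \eqref{def:rveps} with $\lambda_k = \lambda$), the inclusion in \eqref{eq:AIPPM_hpe} becomes the inclusion in \eqref{eq:ref4'}; the bound $\|z^- - z + \tv\| \le \lambda\bar\rho$ coming from the first inequality of \eqref{eq:lemAIPPestimates} (which is what one actually has at the end of refinement) becomes $\|w + (z^- - z)/\lambda\| \le \bar\rho$; and condition \eqref{eq:newcri1} becomes $\varepsilon \le \bar\varepsilon$. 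Proposition~\ref{prop:refapproxsol}(c) therefore applies and yields $v_g \in \nabla g(z_g) + \partial h(z_g)$ together with $\|v_g\| \le 2[\bar\rho + \sqrt{2\bar\varepsilon(M+\lambda^{-1})}]$. Substituting $\lambda^{-1} = 2m$ and the prescribed $\bar\varepsilon$ gives $2\bar\varepsilon(M+2m) = \hat\rho^2/16$, whence $\sqrt{2\bar\varepsilon(M+\lambda^{-1})} = \hat\rho/4 = \bar\rho$; therefore $\|v_g\| \le 4\bar\rho = \hat\rho$, which is exactly \eqref{eq:ref4'''}.

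The main obstacle is nothing deeper than careful bookkeeping, in two places: verifying that the AIPP output, after both the step-2 stopping test and the step-3 refinement, really satisfies the three conditions in \eqref{eq:ref4'} after the $1/\lambda$-rescaling, and checking that the specific choice \eqref{eq:cor_complex1} makes the two error contributions in Proposition~\ref{prop:refapproxsol}(c) coincide. Once both points are handled, (a) and (b) follow by elementary substitution into bounds already proved earlier in the paper.
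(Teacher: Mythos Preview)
Your proposal is correct and follows essentially the same approach as the paper's own proof: for (a) you substitute $\lambda=1/(2m)$ and \eqref{eq:cor_complex1} into the bound of Theorem~\ref{th:AIPPcomplexity}, and for (b) you verify that the AIPP output is a $(\bar\rho,\bar\varepsilon)$-prox-approximate solution and then invoke Proposition~\ref{prop:refapproxsol}(c) together with the identity $\hat\rho = 2[\bar\rho+\sqrt{2\bar\varepsilon(M+2m)}]$. The paper's proof is terser (it cites Theorem~\ref{th:AIPPcomplexity} directly for the fact that the output satisfies \eqref{eq:ref4'}), whereas you spell out the three conditions and the arithmetic inside the $\log_1^+$ more explicitly, but the argument is the same.
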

\begin{proof}
(a) This statement follows immediately from  Theorem~\ref{th:AIPPcomplexity} with $\lambda=1/(2m)$ and $(\bar\rho,\bar\varepsilon)$ as in  \eqref{eq:cor_complex1} and the fact that $m\leq M$ due to (A2).
% and $\bar{R}_\phi$ corresponds to $R_\phi$ with $\lambda_1=1/(2m)$ in view of \eqref{eq:def_Rphi} and \eqref{eq:def:RphiAIPP}.

 (b) First note that Theorem~\ref{th:AIPPcomplexity} implies that the AIPP output $(\lambda,z^-,z,w,\varepsilon)$ satisfies criterion \eqref{eq:ref4'} with  $(\bar\rho,\bar\varepsilon)$  as  in \eqref{eq:cor_complex1}.  Since \eqref{eq:cor_complex1} also implies that 
$$
\hat\rho= 2 \left[ \bar\rho+ \sqrt{2\bar\varepsilon \left(M+2m\right)} \right],
$$
the conclusion of (b)   follows from Proposition~\ref{prop:refapproxsol}(c) and the fact that $\lambda=1/2m$.
% and $(\rho,\varepsilon)=(\bar \rho,\bar \varepsilon)$.  
\end{proof}

\vgap
We now make a few remarks about the iteration-complexity bound \eqref{auxcomplex00}
and its relationship to  two other ones obtained in the literature under the reasonable assumption that
the  term $\mathcal{O}(1/\hat\rho^2)$ in \eqref{auxcomplex00} dominates the other one, i.e.,
bound \eqref{auxcomplex00} reduces to
\[
{\cal O} \left(\frac{m\sqrt{Mm} \, R(\phi;\lambda)}{\hat\rho^2}
\right).  
\]
%\[
%{\cal O} \left(\frac{\sqrt{Mm}\min\{\Delta^0_\phi,  md_0^2\}}{\hat\rho^2}
%\right).
%\]
First, using the definition of $R(\phi;\lambda)$, it is easy to see that the above bound is majorized by the one in \eqref{eq:boundIntro}
(see the proof of Corollary \ref{proprasc1}(a)).
Second, since the iteration-complexity bound for the composite gradient method with $\lambda =1/M$
is  ${\cal O}(M(\phi(z_0)-\phi_*)/\hat \rho^2)$ (see the discussion following Proposition \ref{prop:gradient method}), we conclude that \eqref{eq:boundIntro}, and hence \eqref{auxcomplex00},
is better than the first bound by a factor of $(M/m)^{1/2}$.
Third, %using the fact that $R(\phi;\lam) \le d_0^2/2$, we see that
bound \eqref{eq:boundIntro}, and hence \eqref{auxcomplex00},  is also better than the  one established in Corollary 2 of \cite{nonconv_lan16}
for an ACG method applied directly to
the nonconvex problem \eqref{eq:PenProb2Intro}, namely \eqref{eq:compl-lan},
%${\cal O} (MmD_\phi^2/\hat\rho^2)$ where $D_\phi$ denotes the diameter of  $\dom \phi$,
by at least a factor of $(M/m)^{1/2}$. Note that the ACG method of \cite{nonconv_lan16} assumes that the diameter
$D_h$ of $\dom h$ is bounded while the AIPP method does not.   
%Observe that the latter bound is better than the corresponding one for the composite gradient method %(see Corollary~\ref{cor:CGM}) 
%by a factor of $\sqrt{M/m}$.  ????????????Lan iteration-complexity bound ?????????????????????

%%%%%%%%%%%%%%%%%%%%%%%%%%%%%%%%%%%%%%%%%%%%%%%%%%%%%%%%%%%%%%%%%%%%%%%
%%%%%%%%%%%%%%%%%%%%%%%%%%%%%%%%%%%%%%%%%%%%%%%%%%%%%%%%%%%%%%%%%%%%%%%

%%%%%%%%%%%%%%%%%%%%%%%%%%%%%%%%%%%%%%%%%%%%%%%%%%%%%%%%%%%%%%%%%%%%%%%
%%%%%%%%%%%%%%%%%%%%%%%%%%%%%%%%%%%%%%%%%%%%%%%%%%%%%%%%%%%%%%%%%%%%%%%

\section{The QP-AIPP method}\label{sec:penaltymet}
This section presents the QP-AIPP method for obtaining approximate solutions of the
linearly constrained nonconvex composite optimization problem \eqref{eq:probintro} in the sense of \eqref{eq:approxOptimCond_Intro}.
%We first reformulate the problem by using a penalty  approach, then we propose and analyze a scheme which combines the AIPP method with  a simple
%procedure to adjust the penalty parameter to avoid its a priori knowledge???????????????????.

Throughout this section, it is assumed that  \eqref{eq:probintro} satisfies the following conditions:

\begin{itemize}
\item [(C1)] $h\in \bConv{n}$, $A \ne 0$ and
${\cal F} :=\left\{ z \in \dom h : Az=b \right\}  \ne \emptyset$;
\item [(C2)] $f$ is a differentiable function on $\dom h$ and there exist scalars
$0 < m_f \le L_f$ such that for every $u,z \in \dom h$,
%whose gradient is $L_f$-Lipschitz continuous on $\dom h$, i.e., 
\begin{align}
\|\nabla f(z)-\nabla f(u)\| \leq L_f\|z-u\|, \label{eq:lips1} \\ % \quad \forall z, u\in \dom h \\
\frac{-m_f}{2}\|u-z\|^{2}+\ell_f(u; z)\leq f(u); \label{eq:lips2} %\quad\forall z,u\in \dom h;
\end{align}
\item [(C3)] there exists $\hat c\geq 0$ such that  $\hat \varphi_{\hat c} > -\infty$ where
\begin{equation} \label{eq:penprob}
\hat \varphi_{c}  :=\inf_z \left \{ \varphi_c(z) := (f+h) (z)+\frac{c}{2}\|Az-b\|^2 : z \in \Re^n \right \}, \quad \forall c \in \Re;
\end{equation}
%{\color{blue}
%\item [(C4)] there exists $0<m_f \le L_f$ such that 
%\begin{equation*}
%\frac{-m_f}{2}\|u-z\|^{2}\leq f(u)-\ell_f(u; z),\quad\forall z,u\in \dom h.
%\end{equation*}
%}
\end{itemize}

We make two remarks about conditions (C1)-(C3).
First, (C1) and (C3) imply that the optimal value of \eqref{eq:probintro} is finite but not necessarily achieved.
%First, for every $z\in S$, we easily see that 
% the following quantity $(f+h)(z) - \hat\varphi_{\hat c}$
%????
%
%\[
%\varphi_c(z) = \varphi_{\hat c} (z) + \frac{c-\hat c }2 \|Az-b\|^2 \ge \hat\varphi_{\hat c}  + \frac{c-\hat c }2 \|Az-b\|^2
%\]
%
%????
%\begin{equation}\label{eq:defDeltas}
%\Delta_\phi(z^*):= \phi(z^*) - \phi^*_{\hat c}
%\end{equation}
%is finite and nonnegative. Second, 
%the Lipschitz continuity  in (C2) is considered in order to
%use Corollary~\ref{cor:AIPPref2}(b)  to compute  approximate solution for problem \eqref{eq:constprob}. Fourth, 
Second, (C3) is quite natural in the sense that the penalty approach underlying the QP-AIPP method would
not make sense without it.
Finally, \eqref{eq:lips1} implies that
\begin{equation}\label{ineq:concavity_penf1}
\frac{-L_f}{2}\|u-z\|^{2}\leq f(u)-\ell_f(u; z) \leq\frac{L_f}{2}\|u-z\|^{2},
\quad\forall z,u\in \dom h.
\end{equation}
and hence that \eqref{eq:lips2} automatically holds with $m_f=L_f$, i.e.,
\eqref{eq:lips2} is redundant when $m_f=L_f$. Our analysis in this section also considers 
the case in which a scalar $0<m_f<M_f$ satisfying \eqref{eq:lips2} is known.

%This section considers the following notion of approximate solutions for problem~\eqref{eq:probintro}:
Given a tolerance pair $(\hat \rho,\hat \eta)\in \Re^2_{++}$, a triple 
$(\hat z, \hat v, \hat p)$ is said to be a $(\hat \rho,\hat \eta)$-approximate solution
%%for the Lagrangian system associated to 
of \eqref{eq:probintro} if it satisfies \eqref{eq:approxOptimCond_Intro}.
%\begin{equation}\label{eq:approxlagsystem}
%  v \in \nabla f(y)+\partial h(y) +A^*p ,\quad \| v\| \leq \hat \rho,\quad \|A y -b\|\leq \hat \eta.
%\end{equation}
Clearly, a $(\hat \rho,\hat \eta)$-approximate solution $(\hat z, \hat v, \hat p)$ for the case in which $(\hat \rho,\hat \eta) = (0,0)$ means
that 
$0 = \hat v \in \nabla f(\hat z) + \partial h(\hat z) + A^* \hat p$ and  $A \hat z=b$,
 and  hence that $(\hat z,\hat p)$ is a first-order stationary pair of \eqref{eq:probintro}.
%(see for example ??? of \cite{??}).

The QP-AIPP method is essentially 
%From now on, we discuss how the AIPP method can be used to compute approximate solutions of \eqref{eq:probintro} in the above sense. This is
%accomplished  through 
a quadratic penalty approach where the AIPP method is applied to the penalty subproblem \eqref{eq:penprob} associated with \eqref{eq:probintro}
for a fixed $c>0$ or for $c$ taking values on an increasing sequence $\{c_k\}$ converging to infinity.
%for some $c \in \Re_+$.
%\begin{equation}\label{eq:penprob}
%\min_z \left\{ \varphi_c(z):=f(z)+h(z) + \frac{c}{2}\|A z-b\|^2 : z \in \Re^n \right\}
%\end{equation}
%where $c>0$ is a penalty parameter.
Note that \eqref{eq:penprob}  is a particular case of \eqref{eq:PenProb2Intro} in which 
\begin{equation} \label{eq:gc}
g = g_c := f+\frac{c}2 \|A(\cdot)-b\|^2.
\end{equation}
 Moreover,
we easily see that \eqref{eq:lips2} and \eqref{ineq:concavity_penf1} imply that
$\nabla g_c$ satisfies condition \eqref{ineq:concavity_g}  with $(m, M)=(m_f, L_f+c\|A\|^2)$.

Lemmas \ref{lem:pensec_incl} and \ref{lem:penalty} below describe how a $\bar \rho$-approximate solution of \eqref{eq:penprob} in the sense of \eqref{eq:ref4'''}
yields  a $(\hat \rho,\hat \eta)$-approximate solution  of \eqref{eq:probintro} whenever $c$ is sufficiently large.
Lemma \ref{lem:Rc} introduces an important quantity associated with the penalized problem \eqref{eq:penprob} which plays
a fundamental role  in expressing the inner iteration complexity of the QP-AIPP method stated below for the case
in which $\dom h$ is not necessarily bounded (see Theorem \ref{th:adapAIPPPmet}). It also
establishes a few technical inequalities involving this quantity, one of which plays an important role in the
proof of Theorem \ref{th:adapAIPPPmet} and the statement of Lemma \ref{lem:penalty}.
%
%Lemma \ref{lem:Rc} introduces an important quantity associated with the penalized problem \eqref{??} and
%establishes a few technical inequalities involving this quantity, one of which plays an important role in deriving the
%inner complexity bound in Theorem \ref{??}  for the QP-AIPP method stated below. Moreover, the aforementioned quantity
%plays an important role in the statements of Lemma \ref{??} and Theorem \ref{??}.
 %{\color{red} Where is Lemma 16 used in the proof of Lemma 17?}

\begin{lemma}\label{lem:pensec_incl}
Let $(c,\hat \rho)\in \Re^2_{++}$ be given and let $(\hat z,\hat v)$ be a $\hat\rho$-approximate solution of \eqref{eq:penprob} in the sense of \eqref{eq:ref4'''} with $g=g_c$ where $g_c$ is as in \eqref{eq:gc}. Then the triple $(\hat z,\hat v,\hat p)$ where $\hat p:=c(A \hat z-b)$ satisfies the inclusion and the first inequality in \eqref{eq:approxOptimCond_Intro}.
\end{lemma}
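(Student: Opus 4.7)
The plan is a direct unpacking of the definitions. Since $g_c = f + (c/2)\|A(\cdot)-b\|^2$, a routine chain rule gives
\[
\nabla g_c(\hat z) = \nabla f(\hat z) + c A^*(A\hat z-b).
\]
Substituting this into the inclusion $\hat v \in \nabla g_c(\hat z) + \partial h(\hat z)$ provided by the assumption that $(\hat z,\hat v)$ satisfies \eqref{eq:ref4'''}, one obtains
\[
\hat v \in \nabla f(\hat z) + c A^*(A\hat z - b) + \partial h(\hat z).
\]

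The next step is to identify the multiplier. With the choice $\hat p := c(A\hat z - b)$, the middle term becomes $A^* \hat p$, which immediately yields the inclusion part of \eqref{eq:approxOptimCond_Intro}, namely $\hat v \in \nabla f(\hat z) + \partial h(\hat z) + A^* \hat p$. The bound $\|\hat v\|\le \hat\rho$ is inherited verbatim from the second part of \eqref{eq:ref4'''}, giving the first inequality in \eqref{eq:approxOptimCond_Intro}.

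There is no real obstacle here; the lemma is essentially a definitional bookkeeping step whose role is to separate out the part of \eqref{eq:approxOptimCond_Intro} that holds for every $c>0$ (the inclusion and the stationarity bound) from the feasibility bound $\|A\hat z - b\|\le \hat\eta$, which requires $c$ to be large and will be handled by the subsequent Lemma \ref{lem:penalty}.
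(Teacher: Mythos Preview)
Your argument is correct and is essentially identical to the paper's proof: both simply compute $\nabla g_c(\hat z)=\nabla f(\hat z)+cA^*(A\hat z-b)$, substitute into the inclusion from \eqref{eq:ref4'''}, set $\hat p=c(A\hat z-b)$, and carry over the bound $\|\hat v\|\le\hat\rho$.
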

\begin{proof}
Since $(\hat z,\hat v)$ is a $\hat\rho$-approximate solution of \eqref{eq:penprob}, we have 
$\hat v \in \nabla g_c (\hat z)+\partial h(\hat z)$ and $\|\hat v\|\leq \hat \rho$.
Hence the result follows from  the definition of $\hat p$ and the fact that   $\nabla g_c(\hat z)=\nabla g(\hat z)+ A^*(c(A\hat z-b))= \nabla g(\hat z)+A^*\hat p$.
\end{proof}

The above result is quite general in the sense that it holds for any $c>0$. We will now show that,  by choosing $c$ sufficiently large, we can actually guarantee
that $(\hat z,\hat v,\hat p)$ of Lemma \ref{lem:pensec_incl} also satisfies the second inequality in \eqref{eq:approxOptimCond_Intro} as long as
$(\hat z,\hat v)$ is generated by an instance of the GIPP framework. We first establish the following technical result.

%----------------------------------
%
%Define
%\[
%R_c(\lambda) ????? R(c,\lambda) =   \inf \{ R (u;\varphi_{c}, \lam) : u \in {\cal F} \}
%\]
%\[
%R_c := \inf \{ R (u;\varphi_{c}) : u \in S\} =  \inf \{ R (u;\varphi_{c}) : u \in S\}
%\]
%\[
%R(u;\varphi_c) = \frac12 \|z_0-u\|^2 + (1-\sigma) \lambda_1 [ \varphi_c(u) - \hat \varphi_c] 
%\]
%Clearly, $\hat \varphi_\infty  \ge \hat \varphi_c \ge \hat \varphi_{\hat c}$ for every $c \ge \hat c$
%For $u \in S$, we have
%\[
%R(u;\varphi_c,\lam) = \frac12 \|z_0-u\|^2 + (1-\sigma) \lambda [ (f+h)(u) - \hat \varphi_c] 
%\]
%Hence, $R(u;\varphi_c) \le  R(u;\varphi_{\hat c})$ and hence 
%\begin{equation}\label{eq:ineqRvarfphi}
%R_c \le R_{\hat c}.
%\end{equation}
%
%---------------------------------------------

\begin{lemma}\label{lem:Rc}
Let $\hat c$ be as in (C3) and define % for $(c,\lambda)\in \Re_+ \times \Re_{++}$
\begin{equation}\label{def:Rc}
R_c(\lambda) :=   \inf \{ R (u;\varphi_{c}, \lam) : u \in {\cal F} \} \quad \forall (c,\lambda)\in \Re_+ ^2
\end{equation}
where  ${\cal F}$, $R (\cdot \, ;\cdot,\cdot)$ and $\varphi_c$ are as defined in (C1), \eqref{eq:def_Rphi}  and \eqref{eq:penprob}, respectively. 
Then, for every $c \ge \hat c  $ and $\lambda \ge \hat \lam \in \R_{+}$,  we have
\begin{align}
0 &\le R(u;\varphi_c,\lambda) \le R(u;\varphi_{\hat c}, \hat \lambda) < \infty \quad \forall u \in {\cal F}, \label{eq:Rc1} \\
0 &\leq  R_c(\lambda) \leq R_{\hat c}(\hat \lambda) < \infty. \label{eq:ineqRc}
\end{align} 
%where ??????? $R(\cdot;\cdot)$ is as in \eqref{eq:def_Rphi}.
Moreover,  if  \eqref{eq:probintro} has an optimal solution $z^*$, then
\begin{equation} \label{eq:tempp}
R_c(\lam) \le \frac12 \| z_0-z^*\|^2 + (1-\sigma) \lam[ \hat \varphi_*- \hat \varphi_c]
\end{equation}
where $\hat \varphi_*$ denotes the optimum value of \eqref{eq:probintro}.
\end{lemma}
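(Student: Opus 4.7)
The plan is to exploit a single key observation: for any $u \in \mathcal{F}$, the constraint $Au=b$ forces $\|Au-b\|=0$, and consequently $\varphi_c(u) = (f+h)(u)$ is independent of $c$. Since $\hat\varphi_c$ is the infimum of $\varphi_c$, this immediately yields $(f+h)(u) = \varphi_c(u) \ge \hat\varphi_c$ for every $c \ge 0$ and every feasible $u$. So the bracket $[\varphi_c(u) - \hat\varphi_c]$ appearing inside $R(u;\varphi_c,\lambda)$ is always nonnegative on $\mathcal{F}$, which gives the left inequality in \eqref{eq:Rc1}.

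Next I would establish monotonicity of $\hat\varphi_c$ in $c$: for $c \ge \hat c$, $\varphi_c(z) \ge \varphi_{\hat c}(z)$ pointwise, hence $\hat\varphi_c \ge \hat\varphi_{\hat c} > -\infty$ by (C3). Combining this with the previous paragraph, for any $u \in \mathcal{F}$ we have the two-sided estimate
\[
0 \;\le\; (f+h)(u)-\hat\varphi_c \;\le\; (f+h)(u)-\hat\varphi_{\hat c} \;<\; \infty.
\]
Substituting into the definition \eqref{eq:def_Rphi} of $R(u;\varphi_c,\lambda)$ and combining this bracket inequality with the hypothesized ordering between $\lambda$ and $\hat\lambda$ (both factors of the product $(1-\sigma)\lambda[\varphi_c(u)-\hat\varphi_c]$ moving in the same direction, with nonnegative terms), delivers \eqref{eq:Rc1}. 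The bounds \eqref{eq:ineqRc} then follow by taking the infimum over $u \in \mathcal{F}$ on both sides (noting $\mathcal{F}\ne\emptyset$ by (C1), so the infima are defined and finite).

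For the final statement \eqref{eq:tempp}, the observation is that any optimal solution $z^*$ of \eqref{eq:probintro} belongs to $\mathcal{F}$ by definition and satisfies $(f+h)(z^*) = \hat\varphi_*$. Therefore $z^*$ is a feasible candidate in the infimum \eqref{def:Rc}, giving
\[
R_c(\lambda) \;\le\; R(z^*;\varphi_c,\lambda) \;=\; \tfrac12\|z_0-z^*\|^2 + (1-\sigma)\lambda\bigl[(f+h)(z^*) - \hat\varphi_c\bigr],
\]
and replacing $(f+h)(z^*)$ by $\hat\varphi_*$ yields \eqref{eq:tempp} directly.

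The only subtle point is the quantification over $\lambda$ in \eqref{eq:Rc1}: since the bracket $(f+h)(u)-\hat\varphi_c$ is nonnegative, the product $(1-\sigma)\lambda[(f+h)(u)-\hat\varphi_c]$ is monotone in $\lambda$ in the expected direction, so the comparison between $\lambda$ and $\hat\lambda$ needs to be oriented consistently with the comparison between $c$ and $\hat c$ so that the right-hand side genuinely dominates the left-hand side. Apart from this bookkeeping, the lemma is essentially a direct consequence of the feasibility of $u$ killing the penalty term and of $c \mapsto \hat\varphi_c$ being nondecreasing; no further analytic machinery is needed.
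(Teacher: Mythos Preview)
Your approach is exactly the paper's: use that $\varphi_c(u)=(f+h)(u)$ for $u\in{\cal F}$, that $c\mapsto\hat\varphi_c$ is nondecreasing (so $\hat\varphi_c\ge\hat\varphi_{\hat c}>-\infty$ by (C3)), and then read off \eqref{eq:Rc1}--\eqref{eq:tempp} from the definition of $R(\cdot\,;\cdot,\cdot)$.

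One point deserves a sharper statement rather than the hedge in your last paragraph. You assert that ``both factors of the product $(1-\sigma)\lambda[\varphi_c(u)-\hat\varphi_c]$ move in the same direction,'' but under the stated hypothesis $\lambda\ge\hat\lambda$ they do not: the bracket $(f+h)(u)-\hat\varphi_c$ is \emph{smaller} than $(f+h)(u)-\hat\varphi_{\hat c}$, while $\lambda$ is \emph{larger} than $\hat\lambda$. A product of a larger and a smaller nonnegative number need not be smaller; e.g.\ with $(f+h)(u)=1$, $\hat\varphi_{\hat c}=0$, $\hat\varphi_c=1/2$, $\hat\lambda=1$, $\lambda=10$, $\sigma=0$, $z_0=u$, one gets $R(u;\varphi_c,\lambda)=5>1=R(u;\varphi_{\hat c},\hat\lambda)$. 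So the middle inequality of \eqref{eq:Rc1} actually requires $\lambda\le\hat\lambda$, not $\lambda\ge\hat\lambda$; the lemma as printed has the $\lambda$ orientation reversed. You sensed this in your final paragraph but should say so explicitly. It does no harm downstream: the only invocation in the paper (proof of Theorem~\ref{th:adapAIPPPmet}) takes $\hat\lambda=\lambda$, where the issue disappears.
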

\begin{proof}
Using  \eqref{eq:penprob} and assumption (C3), it easy to see that for every $c \ge \hat c$, we have $\hat \varphi_c \ge \hat \varphi_{\hat c}>-\infty$ and
$\varphi_c(u)=\varphi_{\hat c}(u)=(f+h)(u)$ for every $u \in {\cal F}$.
Hence, the conclusion of the lemma follows immediately from \eqref{def:Rc} and the definition of $R(\cdot \, ;\cdot,\cdot)$ in  \eqref{eq:def_Rphi}.
\end{proof}

%%%%%%%%%%%%%%%%%%%%%%%%%%
%%%%%%%%%%%%%%%%%%%%%%%%%%
%For a given pair $(z_0,\lambda,\hat \eta)\in \Re^n\times\Re_{++}\times\Re_{++}$, consider the following quantity 
%\begin{equation}\label{def:Rhatc}
%R(z_0,\lambda,\hat \eta):=\inf_{u\in S}\left\{\frac{\|z_0-u\|^2}{\lambda(1-\sigma)}+2(f+h)(u)\right\}-2\hat\varphi_{\hat{c}} + \hat c \hat \eta^2.
%\end{equation}

We are now ready to describe  the feasibility behavior of a GIPP instance applied to \eqref{eq:penprob}.
\begin{lemma}\label{lem:penalty}
Assume that  $\{(\lambda_k, z_k,\tilde v_k,\tilde \varepsilon_k)\}$ is a  sequence
 generated by an instance of the GIPP framework with input $\sigma \in (0,1)$ and $z_0\in \dom h$  and with $\phi=\varphi_c$ for some
$c>\hat c$ where $\hat c$ is as in (C3) and $\varphi_c$ is as in \eqref{eq:penprob}. %applied to \eqref{eq:penprob},
%i.e., $\phi=\varphi_c$.
Also, let $\hat \eta\in \Re_{++}$ be given and define
%Let $z_0\in \dom h$,  $\hat \eta\in \Re_{++}$ and $\sigma\in (0,1)$ be given and let $\{(\lambda_k, z_k,\tilde v_k,\tilde \varepsilon_k)\}$ be the sequence
% generated by an instance of the GIPP framework applied to \eqref{eq:penprob}. Also, define
\begin{equation}\label{def:Rhatc}
T_{\hat \eta} (\lambda) :=  \frac{2R_{\hat c}(\lambda)}{\hat \eta^2  (1-\sigma)\lambda}  + \hat c   \qquad \forall \lambda\in \Re_{++}                                     %\hat T_1 := \frac{2R_{\hat c}(\lambda_1)}{\hat \eta^2  (1-\sigma)\lambda_1}  + \hat c
\end{equation}
where $R_{\hat c}(\cdot)$ is as defined in \eqref{def:Rc}.
%\begin{equation}\label{def:Rhatc}
%R = \inf_{u\in S}\left\{\frac{\|z_0-u\|^2}{\lambda_1(1-\sigma)}+2(f+h)(u)\right\}-2\hat\varphi_{\hat{c}} + \hat c \hat \eta^2.
%\end{equation}
%%where the latter quantity is the one defined in \eqref{def:Rhatc}
%with $\lambda=\lambda_1$.
%Then, the following  statements hold
%\begin{itemize}
%\item[(a)] for every $c\geq\hat c$, we have 
%\begin{equation}\label{eq:ineqRvarfphi}
%???????????R_{\varphi_c} \leq \frac{(1-\sigma)\lambda_1}{2}\left(R_{\hat c}-\hat c\hat \eta^2\right)
%\end{equation}
% where $R_{\varphi_c}$ is as defined in \eqref{eq:def_Rphi} with $\phi$ replaced by $\varphi_{c}$;
%\item[(b)]
 Then for every %$c > \hat c$ and %$c \ge R/\hat \eta^2$ and
$ \hat z \in \Re^n$ such that $\varphi_c(\hat z)\leq\varphi_c(z_1)$, we have
\begin{equation}\label{eq:pre_feasib}
\|A \hat z -b\|^2 \le \frac{[T_{\hat \eta}(\lambda_1) - \hat c]\hat\eta^2}{c-\hat c}.
\end{equation}
%\[
%R_c(z) := R(z;\varphi_c)
%\]
%Note que
%\[
%R_c(z)  \le R_{\hat c}(z) \quad z \in S
%\]
%Finally, define
%\[
%R = \min \{  R_{\hat c}(z) : z \in S\} + \hat c \hat \eta^2
%\]
%\end{itemize}
As a consequence, if $c \ge T_{\hat \eta}(\lambda_1) $ then $$\|Az_k-b\|\leq \hat \eta, \quad \forall  k\geq 1.$$
\end{lemma}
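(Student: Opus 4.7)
The strategy is to reduce the claim to a single application of Lemma~\ref{lem:phikd0} with $k=1$ and $\phi=\varphi_c$, after which only bookkeeping remains. Specifically, Lemma~\ref{lem:phikd0} gives
\[
\varphi_c(z_1)\le \varphi_c(u)+\frac{1}{2(1-\sigma)\lambda_1}\|z_0-u\|^2 \qquad \forall u\in\Re^n,
\]
and the hypothesis $\varphi_c(\hat z)\le \varphi_c(z_1)$ lets me replace the left-hand side by $\varphi_c(\hat z)$. To bring the feasibility gap $\|A\hat z-b\|^2$ out, I split the penalty term as
$\varphi_c(\hat z)=\varphi_{\hat c}(\hat z)+\frac{c-\hat c}{2}\|A\hat z-b\|^2$ and use $\varphi_{\hat c}(\hat z)\ge \hat\varphi_{\hat c}$ from assumption (C3).

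Next, I restrict attention to $u\in\mathcal{F}$, for which $Au=b$ forces $\varphi_c(u)=\varphi_{\hat c}(u)=(f+h)(u)$ regardless of $c$. Combining the inequalities above and rearranging yields
\[
\frac{c-\hat c}{2}\|A\hat z-b\|^2 \;\le\; \frac{1}{2(1-\sigma)\lambda_1}\|z_0-u\|^2+\varphi_{\hat c}(u)-\hat\varphi_{\hat c} \;=\; \frac{R(u;\varphi_{\hat c},\lambda_1)}{(1-\sigma)\lambda_1},
\]
where the last equality is exactly the definition of $R(\cdot;\cdot,\cdot)$ in \eqref{eq:def_Rphi} applied to $\varphi_{\hat c}$. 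Taking the infimum over $u\in\mathcal{F}$ and invoking the definition \eqref{def:Rc} of $R_{\hat c}(\lambda_1)$ gives
\[
\|A\hat z-b\|^2 \;\le\; \frac{2R_{\hat c}(\lambda_1)}{(c-\hat c)(1-\sigma)\lambda_1}.
\]
The numerator is $[T_{\hat\eta}(\lambda_1)-\hat c]\hat\eta^2$ by \eqref{def:Rhatc}, which is precisely \eqref{eq:pre_feasib}.

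For the consequence, I note that Proposition~\ref{corGIPP1}(a) applied to $\phi=\varphi_c$ shows that $\{\varphi_c(z_k)\}$ is non-increasing, so $\varphi_c(z_k)\le \varphi_c(z_1)$ for every $k\ge 1$. Thus the first part applies with $\hat z=z_k$. Under the assumption $c\ge T_{\hat\eta}(\lambda_1)$, the ratio $[T_{\hat\eta}(\lambda_1)-\hat c]/(c-\hat c)$ is at most $1$, so \eqref{eq:pre_feasib} yields $\|Az_k-b\|\le \hat\eta$, as required.

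The only mildly delicate step is the switch between $\varphi_c$ and $\varphi_{\hat c}$ that isolates $\|A\hat z-b\|^2$ on the left while producing an expression in $u\in\mathcal{F}$ on the right that matches the definition of $R_{\hat c}(\lambda_1)$; everything else is routine manipulation of the bounds already established for the GIPP framework.
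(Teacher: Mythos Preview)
Your proof is correct and follows essentially the same approach as the paper: both apply Lemma~\ref{lem:phikd0} with $k=1$ and $\phi=\varphi_c$, split the penalty via $\varphi_c=\varphi_{\hat c}+\tfrac{c-\hat c}{2}\|A\cdot-b\|^2$, lower-bound $\varphi_{\hat c}(\hat z)$ by $\hat\varphi_{\hat c}$, restrict to $u\in\mathcal{F}$ so that $\varphi_c(u)=\varphi_{\hat c}(u)$, and then take the infimum over $u\in\mathcal{F}$ to recover $R_{\hat c}(\lambda_1)$. The consequence is likewise argued identically, using the monotonicity of $\{\varphi_c(z_k)\}$ from Proposition~\ref{corGIPP1}(a) (the paper cites \eqref{eqLemaux:rasc}, which is the same statement).
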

\begin{proof} 
First note that the definitions of $\varphi_c$ and $\hat \varphi_c$ in \eqref{eq:penprob} imply that for every $c>0$,
\begin{align}\label{eq:properites_varphi}
\varphi_c(u) &= \varphi_{\hat c} (u) + \frac{c-\hat c }2 \|A u-b\|^2 \ge \hat\varphi_{\hat c}  + \frac{c-\hat c }2 \|A u-b\|^2 \quad
\forall u \in \Re^n.
%\\ \quad \varphi_c (u) &=  \varphi_{\hat c} (u)=(f+h)(u) \quad \forall u \in {\cal F}.  \label{eq:prop2_varphi}
\end{align}
%(a) Since $c\geq \hat c$ , the above relations imply that  
%$\hat \varphi_c:=\geq \hat\varphi_{\hat c}$ and $\varphi_c (u)=\varphi_{\hat c}(u)$ for every $u\in S$, and hence, the inequality in \eqref{eq:ineqRvarfphi} follows immediately from the  definitions of $R_{\varphi_c}$ and $R_{\hat c}$ in \eqref{eq:def_Rphi} and \eqref{def:Rhatc}, respectively. \textcolor{red}{Indeed:
%\begin{align*}
%R_{\varphi_c}&= \inf \left \{ R(u;\varphi_c) :=   \frac12 \|z_0-u\|^2 + (1-\sigma) \lam_1 [\varphi_c(u) - (\varphi_c)_*] : u \in \R^n \right \}\\
%&\leq \inf \left \{\frac12 \|z_0-u\|^2 + (1-\sigma) \lam_1 [\varphi_c(u) - (\varphi_{\hat c})_*] : u \in \R^n \right \}\\
%&\leq \inf \left \{\frac12 \|z_0-u\|^2 + (1-\sigma) \lam_1 [\varphi_c(u) - (\varphi_{\hat c})_*] : u \in S \right \}\\
%&= \inf \left \{\frac12 \|z_0-u\|^2 + (1-\sigma) \lam_1 [\varphi_{\hat{c}}(u) - (\varphi_{\hat c})_*] : u \in S \right \}\\
%&= \inf \left \{R(u;\varphi_{\hat c}): u \in S \right \}
%\end{align*}
%which proves the inequality in (a) in view of \eqref{def:Rhatc}}.
 Now, let $ \hat z\in \Re^n$ be such that $\varphi_c(\hat z)\leq\varphi_c(z_1)$.
Lemma~\ref{lem:phikd0} with  $\phi=\varphi_c$ and
 $k=1$, the previous inequality on $\hat z$, and  \eqref{eq:properites_varphi} with $u=\hat z$, then imply that for every $u\in \mathcal{F}$,
\begin{align*}
&\frac{\|z_0-u\|^2}{2(1-\sigma)\lambda_1}+\varphi_{c}(u)\ge  \varphi_c(z_1)\ge  \varphi_c(\hat z)
\ge \hat\varphi_{\hat c} +\frac{c-\hat c}{2}\|A \hat z-b\|^2.
\end{align*}
Since $\varphi_c(u)=\varphi_{\hat c}(u)$ for every $u \in {\cal F}$, it then follows from the above inequality and  the definition of $R(\cdot \, ;\cdot,\cdot)$ in \eqref{eq:def_Rphi} that  
\[
\|A \hat z-b\|^2\leq \frac{2R(u;\varphi_{\hat c},\lambda_1)}{(c-\hat c)(1-\sigma)\lambda_1} \quad \forall u \in {\cal F}.
\] 
Since the above inequality holds for every $u\in \mathcal{F}$, it then follows from \eqref{def:Rc} and the  definition of $T_{\hat \eta}(\cdot)$ in \eqref{def:Rhatc} that
%\[
%\|A\hat z-b\|^2\leq\frac{(\hat T(\lambda_1)-\hat c)\hat \eta^2}{c-\hat c},
%\]
the first conclusion of the lemma holds.
Now, since by assumption $\{(\lambda_k, z_k,\tilde v_k,\tilde \varepsilon_k)\}$  is generated by an instance of
the GIPP framework with $\phi=\varphi_c$, it follows from \eqref{eqLemaux:rasc}  that
 $\varphi_c(z_k)\leq\varphi_c(z_1)$ for every $k\geq 1$, and hence that the second conclusion of the lemma follows immediately from the first one together with
the assumption that $c \ge T_{\hat \eta}(\lambda_1)$.
\end{proof}
%$$
%\frac{\|z_0-u\|^2}{2\lambda_1(1-\sigma)}  \ge  \varphi_c(z_1) - \varphi_c(u) \ge  \varphi_c(\hat u) - \varphi_c(u)
%\ge \hat\varphi_{\hat c} +\frac{c-\hat c}{2}\%|A \hat u-b\|^2 -(f+h)(u).
%$$
%and hence that
%\[
%\|A \hat u-b\|^2\leq \frac{1}{c-\hat c}\left\{\frac{\|z_0-u\|^2}{\lambda_1(1-\sigma)}+2(f+h)(u)-2\hat\varphi_{\hat c}\right\}.
%\] 
%%Since the above inequality holds for every $u\in S$, it then follows from the  definition of $R$ in \eqref{def:Rhatc} that
%\[
%\|A\hat u-b\|^2\leq\frac{R-\hat c\hat \eta^2}{c-\hat c}
%\]
%and hence that the first conclusion of the lemma follows.
%Now, since by assumption $\{(\lambda_k, z_k,\tilde v_k,\tilde \varepsilon_k)\}$  is generated by an instance of
%the GIPP framework applied to \eqref{eq:penprob}, it follows from \eqref{eqLemaux:rasc}  with $\phi=\varphi_c$ that
% $\varphi_c(z_k)\leq\varphi_c(z_1)$ for every $k\geq 1$, and hence that the second conclusion of the lemma follows immediately from the first one combined with the assumption $c \ge R/\hat \eta^2$.
%\end{proof}

We now make some remarks about the above result. First, it does not assume that ${\cal F}$, and hence $\dom h$, is bounded.
Also, it does not even assume that \eqref{eq:probintro} has an optimal solution.
Second, it  implies that all iterates (excluding the starting one) generated by an instance of the GIPP framework applied to
\eqref{eq:penprob} satisfy the feasibility requirement (i.e., the last inequality) in \eqref{eq:approxOptimCond_Intro} as long as $c$ is sufficiently large, i.e.,
$c \ge T_{\hat \eta}(\lambda_1)$ where $T_{\hat \eta}(\cdot)$ is as in \eqref{def:Rhatc}.
Third, since the quantity $R_{\hat c}(\lam_1)$, which appears in the definition of $T_{\hat \eta}(\lambda_1)$ in \eqref{def:Rhatc},
is difficult  to estimate, a simple way of choosing a  penalty parameter $c$ such that $c \ge T_{\hat \eta}(\lambda_1)$ is not apparent. The QP-AIPP method described below
solves instead a sequence of penalized subproblems \eqref{eq:penprob} for increasing values of $c$ (i.e., updated according to $c \leftarrow 2c$). Moreover, despite solving a
sequence of penalized subproblems,
it is shown that its overall ACG iteration complexity is the same as the one for the ideal method corresponding to solving \eqref{eq:penprob} with
$c=T_{\hat \eta}(\lambda_1)$.
%%%%%%%%%%%%%%%%%%%%%%%%%%%%%%
%%%%%%%%%%%%%%%%%%%%%%%%%%%%%%

We are ready to state the QP-AIPP method.

%\noindent\rule[0.5ex]{1\columnwidth}{1pt}
%
%Quadratic Penalty AIPP Method 
%
%\noindent\rule[0.5ex]{1\columnwidth}{1pt}
%
%\begin{itemize}
%\item [(0)]  Let  $z_0 \in \dom h $,  $\sigma \in (0,1)$, $L_f$ satisfying \eqref{ineq:concavity_penf1}, and  a tolerance pair  $(\hat \rho,\hat \eta)\in \Re^2_{++}$ be given, and set $c=L_f/\|A\|^2$;
%\item [(1)]  apply the AIPP method with inputs
% \begin{equation}\label{eq:mMpenmet}
% z_0,\;\sigma, \; (m,M)=(L_f,  L_f+c\|A\|^2),\;  (\bar\rho,\bar\varepsilon) \in \Re^2_{++}
% \end{equation}
%where 
%\begin{equation}\label{eq:AIPPPmet_toler}
%\bar\rho=\frac{\hat\rho}{4}, \quad \quad \bar\varepsilon= \frac{\hat\rho^2}{32(M+2m)}
%\end{equation} 
%to compute a $(\bar \rho, \bar \varepsilon)$-prox approximate solution $(\lam, z^-, z,w,\varepsilon)$
%of  \eqref{eq:PenProb2Intro} as in  \eqref{eq:ref4'} with 
%\begin{equation}\label{def:func_g_PM}
%g= f+\frac{c}{2}\|A(\cdot)-b\|^2,\quad h=h;
%\end{equation}
%\item [(2)] use $(\lam,z^-,z,w,\varepsilon)$ to compute  $(z_g,v_g)$ as  in  Proposition~\ref{prop:refapproxsol} with $g=g_c$ where
%$g_c$ is as in \eqref{eq:gc}, and set $(z_c,v_c,p_c)=(z_g,v_g,c(A z_g-b))$;
%\item [(3)] if $\|A z_c-b\|>  \hat\eta$ then set  $c=2c$ and go to (1); otherwise, stop and output $(\hat z,\hat v,\hat p)=(z_c,v_c,p_c)$. %where $\hat w=c(A \hat z-b)$;
%\end{itemize} 
%\noindent\rule[0.5ex]{1\columnwidth}{1pt}

\noindent\rule[0.5ex]{1\columnwidth}{1pt}

QP-AIPP Method 

\noindent\rule[0.5ex]{1\columnwidth}{1pt}

\begin{itemize}
\item [(0)]  Let  $z_0 \in \dom h $, $\sigma \in (0,1)$, $L_f$ satisfying \eqref{ineq:concavity_penf1}, $m_f$ satisfying \eqref{eq:lips2}, and  a tolerance pair  $(\hat \rho,\hat \eta)\in \Re^2_{++}$ be given, and set
\begin{equation}\label{eq:lam_c_def}
 \lambda=\frac{1}{2m_f}, \quad c=\hat c + \frac{L_f}{\|A\|^2};
 \end{equation} 
\item [(1)]  apply the AIPP method with inputs $z_0$, $\sigma$, $\lambda$,
 \begin{equation}\label{eq:mMpenmet}
% z_0,\;\sigma, \;
(m, M) = (m_f,L_f+c\|A\|^2), %\;  (\bar\rho,\bar\varepsilon) \in \Re^2_{++}
% \end{equation}
%and
%\begin{equation}\label{eq:AIPPPmet_toler}
%\quad (\bar\rho, \bar \varepsilon) := \left( \frac{\hat\rho}{4},  \frac{\hat\rho^2}{32(M+2m)} \right)
\end{equation} 
and $ (\bar\rho, \bar \varepsilon)$ as in \eqref{eq:cor_complex1}
to find a $(\bar \rho, \bar \varepsilon)$-prox approximate solution $(\lam, z^-, z,w,\varepsilon)$
of  problem \eqref{eq:PenProb2Intro}  (according to \eqref{eq:ref4'}) with $g:=g_c$ and $g_c$  as in \eqref{eq:gc};
%\begin{equation}\label{def:func_g_PM}
%g= f+\frac{c}{2}\|A(\cdot)-b\|^2,\quad h=h;
%\end{equation}
\item [(2)] use $(\lam,z^-,z,w,\varepsilon)$ to compute  $(z_g,v_g)$ as  in  Proposition~\ref{prop:refapproxsol} with $g=g_c$; % and set $(z_c,v_c,p_c)=(z_g,v_g,c(A z_g-b))$;
%\item [(3)] if $\|A z_c-b\|>  \hat\eta$ then set  $c=2c$ and go to (1); otherwise, stop and output $(\hat z,\hat v,\hat p)=(z_c,v_c,p_c)$. %where $\hat w=c(A \hat z-b)$;
\item [(3)] if $\|A z_g-b\|>  \hat\eta$ then set  $c=2c$ and go to (1); otherwise, stop and output $(\hat z,\hat v,\hat p)=(z_g,v_g,c(A z_g-b))$. %where $\hat w=c(A \hat z-b)$;
\end{itemize} 
\noindent\rule[0.5ex]{1\columnwidth}{1pt}

\gap 

Every loop of the  QP-AIPP method  invokes in its step 1 the AIPP  method of Subsection~\ref{sec:AIPPmet} to compute a
$(\bar \rho, \bar \varepsilon)$-prox approximate solution
of  \eqref{eq:PenProb2Intro}. %Hence, due to step~0 of the AIPP method, we see that $\lambda=1/(2L_f)$.
The latter method in turn uses the ACG method of Subsection~\ref{subsec:Nesterov's-Method} as a subroutine in its implementation
(see step~1 of the AIPP method).
For simplicity, we refer to all ACG iterations performed during calls to the ACG method as inner iterations.

We now make a few remarks about the QP-AIPP method.
First, it follows from Corollary~\ref{cor:AIPPref2}(b) that the pair $(z_g,v_g)$ is a $\hat\rho$-approximate solution of \eqref{eq:penprob} in the sense of \eqref{eq:ref4'''}
with $g=g_c$ and $g_c$  as in \eqref{eq:gc}.
As a consequence, Lemma~\ref{lem:pensec_incl} implies that
the output $(\hat z, \hat v,\hat p)$ satisfies the inclusion and the first inequality in \eqref{eq:approxOptimCond_Intro}.
Second, since  $(\lam,z^-,z,w,\varepsilon)$ computed at step 1 is an iterate of the AIPP method, which in turn is a special instance of
the GIPP framework, and $\phi_c(z_g) \le \phi_c(z)$ due to \eqref{ineq:qfdeltaf}
of Lemma~\ref{lem:approxsolreps}, we conclude from Lemma \ref{lem:penalty} that $\hat z=z_g$ satisfies \eqref{eq:pre_feasib}. Third, since
every loop of the QP-AIPP method doubles $c$, the condition $c > T_{\hat \eta}(\lambda_1)$ will be eventualy satisfied. Hence, in view of
the previous remark, the $z_g$ corresponding to this $c$ will satisfy the feasibility condition $\|A z_g - b\| \le \hat \rho$ and the
QP-AIPP method will stop in view of its stopping criterion in step 3.
Finally, in view of the first and third remarks, we conclude that the QP-AIPP method terminates in step 3 with a triple $(\hat z,\hat v,\hat p)$
satisfying \eqref{eq:approxOptimCond_Intro}.

The next result derives a bound on the overall number of inner iterations of the  quadratic penalty AIPP method to
obtain an approximate solution  of  \eqref{eq:probintro} in the sense of \eqref{eq:approxOptimCond_Intro}. 
%\textcolor{red}{For simplicity, we assume that the scalar $\tilde c=0$ (see condition (C3)).}
%\textcolor{red}{If we accept the condition involving $\tilde c$, then we need to replace $R$ by $R+\tilde c(\eta')^2$ in the complexity bound below.}
\begin{theorem}\label{th:adapAIPPPmet}
Let $\hat c$ be as in (C1) and define
\begin{equation}\label{eq:Theta}
%\hat T :=   \frac{2R_{\hat c}(\lambda)}{\hat \eta^2  (1-\sigma)\lambda}  + \hat c, \quad \right) \right)
\lambda:=\frac{1}{2m_f},\quad T_{\hat \eta} :=  \frac{2R_{\hat c}(\lambda)}{\hat \eta^2  (1-\sigma)\lambda}  + \hat c, \quad \Theta :=\frac{L_f + T_{\hat \eta} \|A\|^2}{m_f}
\end{equation}
where $R_{\hat c}(\lambda)$ is as in  \eqref{def:Rc}.
Then, the  QP-AIPP method outputs a triple $(\hat z,  \hat v,\hat p)$ satisfying
\eqref{eq:approxOptimCond_Intro}
in a total number of inner iterations bounded by
%\begin{equation}\label{bound:adaptmet}
%\mathcal{O}\left\{\left(1+ \sqrt{\hat T}
% \frac{\|A\|}{\sqrt{L_f}} \right)\left[\frac{L_f^2 R_{\hat c}(\lam)}{\hat \rho^2}+\log^+_1(\Theta)\right]\right\} .
%\end{equation}
\begin{equation}\label{bound:adaptmet}
\mathcal{O}\left( \sqrt{\Theta} \left[\frac{m_f^2 R_{\hat c}(\lam)}{\hat \rho^2}+\log^+_1(\Theta)\right]\right).
\end{equation}

%where $\hat c$ and $\lambda$  are as in (C1) and \eqref{eq:mMpenmet}, respectively, and $R_{\hat c}(\lambda)$ are as in (C1),
%\eqref{eq:mMpenmet},  and \eqref{def:Rc},  respectively, and $\Theta$ is defined as
%\begin{equation}\label{eq:Theta}
%\Theta=1+\hat T \frac{\|A\|^2}{L_f}.
%\end{equation}

\end{theorem}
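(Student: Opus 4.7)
The plan is to bound the total inner-iteration count by combining a doubling analysis on the penalty parameter $c$ with the per-loop complexity guarantee of Corollary~\ref{cor:AIPPref2}(a), and then to observe that the resulting sum is geometric so that it is dominated by the cost of the last outer loop.

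First I would show that the outer loop terminates after only a few doublings of $c$, and control the final penalty value. Each outer loop invokes the AIPP method, which (by Lemma~\ref{prop:AIPPmethod}(c)) is a GIPP instance applied to $\phi=\varphi_c$; its last iterate $z = z_k$ therefore satisfies the descent bound $\varphi_c(z_k)\le \varphi_c(z_1)$ from Proposition~\ref{corGIPP1}(a). The refinement $z_g$ built in step~2 satisfies $\varphi_c(z_g)\le\varphi_c(z)$ (the standard composite-gradient descent fact for the $(M+\lambda^{-1})$-smooth upper model, as pointed out in the remarks following QP-AIPP), so $\varphi_c(z_g)\le\varphi_c(z_1)$ and Lemma~\ref{lem:penalty} gives
\begin{equation*}
\|A z_g - b\|^2 \;\le\; \frac{[T_{\hat\eta}(\lambda)-\hat c]\,\hat\eta^{\,2}}{c-\hat c}.
\end{equation*}
Hence the stopping criterion $\|Az_g-b\|\le\hat\eta$ in step~3 must fire as soon as $c\ge T_{\hat\eta}(\lambda)=T_{\hat\eta}$. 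Since $c$ starts at $c_0:=\hat c + L_f/\|A\|^2$ and is doubled after each failed test, the algorithm performs at most $J+1$ outer loops, where $c_J := 2^J c_0 \le 2 T_{\hat\eta}$; in particular $M_J := L_f + c_J\|A\|^2 \le 2(L_f + T_{\hat\eta}\|A\|^2) = 2 m_f \Theta$.

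Next I would estimate the cost of loop $j$. With $g=g_{c_j}$, condition (A2) holds with $(m,M)=(m_f, M_j)$ where $M_j := L_f + c_j\|A\|^2$, and Lemma~\ref{lem:Rc} yields $R(\varphi_{c_j};\lambda) \le R_{\hat c}(\lambda)$. Corollary~\ref{cor:AIPPref2}(a) then bounds the inner iterations of the $j$-th outer loop by
\begin{equation*}
\mathcal{O}\!\left(\sqrt{M_j/m_f}\left[\frac{m_f^2 R_{\hat c}(\lambda)}{\hat\rho^{\,2}} + \log^+_1(M_j/m_f)\right]\right).
\end{equation*}
Summing over $j=0,\ldots,J$ and using $M_{j+1} = L_f + 2 c_j\|A\|^2 \le 2 M_j$, which implies $\sqrt{M_j/m_f} \le \sqrt{2}\,\sqrt{M_J/m_f}\,2^{-(J-j)/2}$, the geometric series gives $\sum_{j=0}^{J}\sqrt{M_j/m_f} = \mathcal{O}(\sqrt{M_J/m_f}) = \mathcal{O}(\sqrt{\Theta})$. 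Monotonicity of the logarithmic factor gives $\log^+_1(M_j/m_f)\le\log^+_1(M_J/m_f) = \mathcal{O}(\log^+_1\Theta)$. Combining these two estimates yields the bound \eqref{bound:adaptmet}. That the output triple satisfies \eqref{eq:approxOptimCond_Intro} follows from Corollary~\ref{cor:AIPPref2}(b) and Lemma~\ref{lem:pensec_incl} (for the inclusion and the first inequality) together with the termination test in step~3 (for the feasibility inequality).

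The main obstacle is the geometric-sum step: one must verify carefully, using the doubling rule together with $M_{j+1}\le 2M_j$, that the $\mathcal{O}(\log\Theta)$ number of outer loops introduces no extra multiplicative overhead, so that both the polynomial and the logarithmic pieces of the per-loop bound collapse into a single expression evaluated at the terminal value $M_J/m_f \le 2\Theta$.
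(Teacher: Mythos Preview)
Your overall strategy is exactly the one the paper uses: bound the number of outer loops via Lemma~\ref{lem:penalty}, bound each loop's inner cost via Corollary~\ref{cor:AIPPref2}(a) after replacing $R(\varphi_{c_j};\lambda)$ by $R_{\hat c}(\lambda)$ through Lemma~\ref{lem:Rc}, and then sum a geometric series in $\sqrt{M_j}$. The correctness argument for the output triple is also the paper's.

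There is, however, a genuine slip in your geometric-sum step. From $M_{j+1}\le 2M_j$ you can only deduce $M_j \ge M_J\,2^{-(J-j)}$, i.e.\ a \emph{lower} bound on $M_j$; it does \emph{not} give the upper bound $\sqrt{M_j/m_f}\le\sqrt{2}\,\sqrt{M_J/m_f}\,2^{-(J-j)/2}$ that you use. The inequality you need goes the other way. The fix is the one the paper uses implicitly: since $c_j\ge c_0=\hat c+L_f/\|A\|^2$, one has $c_j\|A\|^2\ge L_f$, hence
\[
M_j=L_f+c_j\|A\|^2\le 2\,c_j\|A\|^2=2^{\,j+1}c_0\|A\|^2,
\]
so $\sum_{j=0}^{J}\sqrt{M_j}\le \sqrt{2c_0\|A\|^2}\sum_{j=0}^{J}2^{\,j/2}=\mathcal{O}\bigl(\sqrt{2^{J}c_0\|A\|^2}\bigr)=\mathcal{O}(\sqrt{M_J})=\mathcal{O}(\sqrt{m_f\Theta})$. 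A minor related point: your claim $c_J\le 2T_{\hat\eta}$ need not hold when $J=0$ (i.e.\ the very first loop already succeeds), because $c_0=\hat c+L_f/\|A\|^2$ can exceed $2T_{\hat\eta}$; nevertheless $M_0=2L_f+\hat c\|A\|^2\le 2(L_f+T_{\hat\eta}\|A\|^2)=2m_f\Theta$ directly, so the conclusion survives. With these two adjustments your argument is complete and coincides with the paper's.
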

\begin{proof}
Let $c_1 := \hat c + L_f/\|A\|^2$. Noting the stopping criterion in step~3, using the second remark preceding the theorem and the fact that \eqref{eq:lam_c_def} implies  $c=c_l:=2^{l-1}c_1$ at the $l$-th  loop of the QP-AIPP method, we conclude that the QP-AIPP method stops in at most $\bar{l}$ loops where $\bar{l}$ is
the first index $l \ge 1$ such that  
$
2^{l-1}c_1 >  T_{\hat \eta}.
$
We claim that
\begin{equation} \label{eq:claim}
\sum_{l=1}^{\bar l}  \sqrt{\frac{L_f + c_l \|A\|^2}{m_f}}   \le {\cal O} \left(\sqrt{ \frac{L_f + T_{\hat \eta}\|A\|^2}{m_f} }\right) = {\cal O}(\sqrt{\Theta}).
\end{equation}
Before establishing the above claim, we will use it to show that the conclusion of the theorem holds.
Indeed,
%
%
%%where $\hat T(\cdot)$ and $\lam$ are as in \eqref{def:Rhatc} and \eqref{??}, respectively. 
%Using the definition of $\bar l$, it is easy to see that if $\bar l >1$ then
%%\begin{equation} %\label{eq:cK}
%%c_{\bar l} = \left\{ \begin{array}{cc}  = L_f/\|A\|^2, & \mbox{if $\bar l=1$;} \\
%%< \frac{2R}{\hat\eta^2,} & \mbox{if $\bar l>1$.}\end{array} \right.
%%\end{equation}
%%if $\bar{l}=1$ then $c_{\bar l}= L_f/\|A\|^2$,  otherwise $c_{\bar l}< 2R/\hat\eta^2$  which due to definition of $c_{\bar l}$  implies that 
%\begin{equation}\label{eq:cK}
% c_{\bar l} \leq 2\hat T, \quad 2^{\bar{l}}\leq \frac{4\hat T}{c_1}.
%\end{equation}
%\[
%1 \le   c_l  \frac{\|A\| ^2} {L_f} = {\cal O}  \left (\hat T + \frac{L_f}{\|A\| ^2} \right  ) \quad \forall l =1,\ldots,\bar l.
%\]
%\[
%\sum_{l=1}^{\bar l}  \frac{\sqrt{c_l}\|A\|}{\sqrt{L_f}}    \le {\cal O} \left ( \sqrt{ \frac{ \|A\|^2 \hat T}{L_f} } + 1 \right) = {\cal O}(\sqrt{\Theta}) 
%\]
%\[
%\sum_{l=1}^{\bar l}  \left( 1+\frac{\sqrt{c_l}\|A\|}{\sqrt{L_f}}  \right)   \le {\cal O} \left ( \sqrt{ \frac{ \|A\|^2 \hat T}{L_f} } + 1 \right) = {\cal O}(\sqrt{\Theta}) 
%\]
%\begin{align*}
%\sum_{l=1}^{\bar l} \sqrt{c_l} = \sqrt{c_1} \sum_{l=1}^{\bar{l}}(\sqrt{2})^{(l-1)} \leq \frac{\sqrt{c_1} (\sqrt{2})^{\bar{l}}}{\sqrt{2}-1}\leq \frac{\sqrt{c_1}}{\sqrt{2}-1}\left(\frac{2\sqrt{\hat T}}{\sqrt{ c_1}}\right) = {\cal O}(\sqrt{\hat T})
%\end{align*}
first note that  the definition of $c_1$ and the above definition of $c_l$ imply that $c_l \ge c_1 \ge \hat c$ for every $l \ge 1$.
Hence, it follows from the second inequality in \eqref{eq:ineqRc} with $(c,\hat \lambda)=(c_l,\lambda)$  that $R_{c_l}(\lam) \le R_{\hat c}(\lam)$.
Since \eqref{eq:def_Rphi} and \eqref{def:Rc} easily imply that $R(\varphi_{c_l},\lam) \le R_{c_l}(\lambda) $, % for every $(c,\lam) \in \Re^2_+$, 
we then conclude that $R(\varphi_{c_l}, \lam) \le R_{\hat c}(\lam)$.
The latter conclusion, \eqref{eq:claim} %{\color{red}(I don't think that \eqref{eq:claim} is used here.)}
and Corollary~\ref{cor:AIPPref2}(a) with  $\phi=\varphi_{c_l}$ and $(m,M)$ as in \eqref{eq:mMpenmet} then imply
that the number of inner iterations during the $l$-th  loop of the  QP-AIPP method  is bounded by 
\begin{equation}\label{innercomplexadapAIPPP}
\mathcal{O}\left(\sqrt{\frac{L_f + c_l \|A\|^2}{m_f}} \left[\frac{m_f^2 R_{\hat c}(\lambda)}{\hat \rho^2}+
\log^+_1 \left( \Theta \right) \right] \right).
\end{equation}
Hence, the total number of inner iterations performed by the QP-AIPP method is bounded by the sum of  the previous bound over $l=1,\ldots,\bar l$ which is
equal to \eqref{bound:adaptmet} in view of \eqref{eq:claim}.

We will now show that \eqref{eq:claim} holds.   If $\bar{l}=1$ then 
it follows from the definitions of $\Theta$ in  \eqref{eq:Theta}  and $c_1$ in the beginning of the proof that
\begin{equation}\label{ineq:theta_subprb}
\frac{c_1\|A\|^2}{m_f}= \frac{L_f + \hat c\|A\|^2}{m_f} \leq \frac{L_f + T_{\hat \eta}\|A\|^2}{m_f} = \Theta
\end{equation}
and hence \eqref{eq:claim} holds.
Consider now the case in which $\bar{l}>1$. Using the fact that   $c_l=2^{l-1}c_1$ together with the first equality in \eqref{ineq:theta_subprb}, we have that $c_l\|A\|^2/m_f \geq c_1|A\|^2/m_f \geq L_f / m_f $ and hence
\begin{align}\label{ineq:theta_large_lbar}
&\sum_{l=1}^{\bar l}  \sqrt{\frac{L_f + c_l \|A\|^2}{m_f}} 
\le   \sqrt{\frac{2 c_1 \|A\|^2}{m_f}}\sum_{l=1}^{\bar l} (\sqrt{2})^{l-1} 
\leq\mathcal{O}\left(\sqrt{\frac{c_1\|A\|^2} {m_f}}\sqrt{2}^{\bar{l}}\right).
\end{align}
Using the definition of $\bar l$ and the fact that $\bar l > 1$, we easily see that $2^{\bar l-1}c_1 \le 2 T_{\hat \eta}$ and hence that $(\sqrt{2})^{\bar l} \leq 2(T_{\hat \eta}/c_1)^{1/2}$. The last inequality together with \eqref{ineq:theta_large_lbar} and the definition of $\Theta$ in  \eqref{eq:Theta} then imply \eqref{eq:claim}.
\end{proof}

Before ending this section, we make three remarks about Theorem~\ref{th:adapAIPPPmet}.
First, \eqref{eq:tempp} implies the quantity $R_{\hat c}(\lam)$ admits the upper bound
\begin{equation*}
R_{\hat c}(\lam) \le \frac12 \hat d_0^2 + (1-\sigma) \lam[ \hat \varphi_*- \hat \varphi_{\hat c}]
\end{equation*}
where $\hat d_0 := \left\{\|z_0 - z_*\| : z_* \text{ is an optimal solution of \eqref{eq:probintro}} \right\}$. Second, in terms of the tolerance pair $(\hat \rho,\hat \eta)$ only, the iteration-complexity bound  \eqref{bound:adaptmet} reduces
to $\mathcal{O}\left(1/(\hat \rho^2\hat \eta)\right)$ for an arbitrary initial point $z_0 \in \dom h$.
Third, the iteration-complexity bound \eqref{bound:adaptmet}  is almost the same as the one corresponding to the case in which $T_{\hat\eta}=T_{\hat\eta}(\lam)$ as in \eqref{def:Rhatc} is known and the penalty parameter is set to $c=T_{\hat \eta}$, namely,
$$ 
{\cal O} \left (\sqrt{\Theta} \left  [   \frac{m_f^2 R(\varphi_{c},\lam) }{\hat \rho^2}  + \log^+_1(\Theta) \right ] \right)
= {\cal O} \left (\sqrt{\Theta} \left  [   \frac{m_f^2 R_{c} (\lam) }{\hat \rho^2}  + \log^+_1(\Theta) \right ] \right)
$$
%= {\cal O} \left (\sqrt{\Theta} \left  [   \frac{ L_f^2 R(\varphi_{\hat T},\lam) }{\hat \rho^2}  + \log^+_1(\Theta) \right ] \right) $$
which follows as a consequence of
Corollary~\ref{cor:AIPPref2}  with $M$ as in \eqref{eq:mMpenmet}.
Note that the two bounds differ only in that the quantity $R_{c}(\lam)$,
which appears in the above bound, may be strictly smaller than the quantity $R_{\hat c}(\lam)$ in \eqref{bound:adaptmet} (see \eqref{eq:ineqRc}). 

%R_0=\frac{L_f \hat \eta^2 }{\|A\|^2},\quad c=\frac{R_0}{\hat \eta^2};
%$$

%%%%%%%%%%%%%%%%%%%%%%%%%%%%%%%%%%%%%%%%%%%%%%%%%%%%%%%%%%%%%%%%%%%%%%%%
%%%%%%%%%%%%%%%%%%%%%%%%%%%%%%%%%%%%%%%%%%%%%%%%%%%%%%%%%%%%%%%%%%%%%%%%

%\section*{Acknowledgments}
%The authors would like to thank the two anonymous referees and the associate editor for their insightful %comments on earlier drafts of this paper.

\section{Computational Results}
\label{sec:computResults}

   The goal of this section is to present a few computational results that show the performance of the AIPP method, which would consequently assess the performance of the QP-AIPP method. 
In particular, while the experiments are limited in the sense that they do not directly test the actual behavior of the QP-AIPP method, they can be considered as examples of subproblems in the 
execution of the penalty-based method. The AIPP method is benchmarked against two other nonconvex optimization methods, namely the projected gradient (PG) method and the accelerated 
gradient (AG) method recently proposed and analyzed in \cite{nonconv_lan16}.

Several instances of the quadratic programming (QP)  problem
\raggedbottom
\begin{equation}\label{testQPprob}
\min\left\{ g(z):=-\frac{\xi}{2}\|DBz\|^{2}+\frac{\tau}{2}\|Az-b\|^{2}:z\in\Delta_{n}\right\}
\end{equation}
were considered where $A\in \Re^{l\times n}$,  $B\in \Re^{n\times n}$,  $D\in \Re^{n\times n}$ is a diagonal matrix, $b\in \Re^{ l\times1}$, $(\xi,\tau)\in \Re^2_{++}$,  and
$\Delta_{n}:=\left\{ z\in\Re^n:\sum_{i=1}^{n}z_{i}=1, \;\; z_i\geq0\right\}$.
More specifically, we set the dimensions to be $(l,n)=(20,300)$.  We also generated the entries of $A,B$ and $b$ by sampling from the uniform distribution ${\cal U}[0,1]$ 
and the diagonal entries of $D$ by sampling from the discrete uniform distribution ${\cal U}\{1,1000\}$.  By appropriately choosing the scalars  $\xi$ and $\tau$,
the instance corresponding to a pair of parameters $(M,m)\in\Re^2_{++}$
was generated so that $M=\lambda_{\max}(\nabla^{2}g)$ and $-m=\lambda_{\min}(\nabla^{2}g)$ where $\lambda_{\max}(\nabla^{2}g)$ and $\lambda_{\min}(\nabla^{2}g)$ denote
 the largest and smallest eigenvalues of the Hessian of $g$ respectively.
The parameters $(\lambda,\sigma)$ were set to be  $(0.9/m, 0.3)$. The AIPP, PG, and AG methods were implemented in 
MATLAB 2016a scripts and were run on Linux 64-bit machines each containing Xeon E5520 processors and at least 8 GB of memory. 

All three methods use the centroid of the set
$\Delta_{n}$ as the initial starting point $z_0$  and were run until a pair $(z,v)$ was generated satisfying the condition
\begin{equation}
v\in \nabla g(z)+N_{\Delta_n}(z), \qquad  \frac{\|v\|}{\|\nabla g(z_{0})\|+1}\leq \bar \rho\label{eq:comp_term}
\end{equation}
for a given tolerance $\bar \rho>0$. Here, $N_{X}(z)$ denotes the normal cone of $X$ at $z$, i.e. $N_X(z)=\{u\in \Re^n: \langle u, \tilde z-z\rangle \leq 0,\; \forall \tilde z \in X\}.$ 
The results of the two tables below were obtained with $\bar \rho=10^{-7}$. 
They present results for different choices of the curvature pair $(M,m)$.  Each entry  in the $\bar{g}$-column is the value of the objective function of \eqref{testQPprob}
at the last iterate generated by each method. Since they are approximately the same for all three methods, only one value is reported.
 The bold numbers in each table highlight the algorithm that performed the most efficiently in terms of total number of iterations. 
It should be noted that both AIPP and PG methods perform a single projection step per iteration while the AG method performs two.

%%%%%%%%%%%%%%%%%%%%%%%%%%%%%%%%%%%%%%%%%%%%%%%%%%%%%%%%%%%%%%%%%%%%%%%%%%
%%%%%%%%%%%%%%%%%%%%%%%%%%%%%%%%%%%%%%%%%%%%%%%%%%%%%%%%%%%%%%%%%%%%%%%%%%

\begin{table}[H]
\begin{centering}
\begin{tabular}{|>{\centering}p{1.5cm}>{\centering}p{1.5cm}|>{\centering}m{2cm}|>{\centering}p{1.6cm}>{\centering}p{1.6cm}>{\centering}p{1.6cm}|}
\hline 
\multicolumn{2}{|c|}{Size} & \multirow{2}{2cm}{\centering{}$\bar{g}$} & \multicolumn{3}{c|}{Iteration Count}\tabularnewline
{\small{}$M$} & {\small{}$m$} &  & {\small{}PG} & {\small{}AG} & {\small{}AIPP}\tabularnewline
\hline 
{\small{}16777216} & {\small{}16777216} & {\small{}-2.24E+05} & {\small{}5445} & \textbf{\small{}374} & {\small{}14822}\tabularnewline
{\small{}16777216} & {\small{}1048576} & {\small{}-3.83E+04} & {\small{}7988} & \textbf{\small{}4429} & {\small{}6711}\tabularnewline
{\small{}16777216} & {\small{}65536} & {\small{}-4.46E+02} & {\small{}91295} & \textbf{\small{}22087} & {\small{}24129}\tabularnewline
{\small{}16777216} & {\small{}4096} & {\small{}4.07E+03} & {\small{}80963} & {\small{}26053} & \textbf{\small{}5706}\tabularnewline
{\small{}16777216} & {\small{}256} & {\small{}4.38E+03} & {\small{}82029} & {\small{}20371} & \textbf{\small{}1625}\tabularnewline
{\small{}16777216} & {\small{}16} & {\small{}4.40E+03} & {\small{}81883} & {\small{}20761} & \textbf{\small{}2308}\tabularnewline
\hline 
\end{tabular}
\par\end{centering}
\caption{Numerical results with $\sigma=0.3$ and $\lambda=0.9/m$\label{tab:t1}}
\end{table}
\vspace{-5mm}

%%%%%%%%%%%%%%%%%%%%%%%%%%%%%%%%%%%%%%%%%%%%%%%%%%%%%%%%%%%%%%%%%%%%%%%%%%
%%%%%%%%%%%%%%%%%%%%%%%%%%%%%%%%%%%%%%%%%%%%%%%%%%%%%%%%%%%%%%%%%%%%%%%%%%
\begin{table}[H]
\begin{centering}
\begin{tabular}{|>{\centering}p{1.5cm}>{\centering}p{1.5cm}|>{\centering}m{2cm}|>{\centering}p{1.6cm}>{\centering}p{1.6cm}>{\centering}p{1.6cm}|}
\hline 
\multicolumn{2}{|c|}{Size} & \multirow{2}{2cm}{\centering{}$\bar{g}$} & \multicolumn{3}{c|}{Iteration Count}\tabularnewline
{\small{}$M$} & {\small{}$m$} &  & {\small{}PG} & {\small{}AG} & {\small{}AIPP}\tabularnewline
\hline 
{\small{}4000} & {\small{}1} & {\small{}9.68E-01} & {\small{}80560} & {\small{}24813} & \textbf{\small{}5752}\tabularnewline
{\small{}16000} & {\small{}1} & {\small{}4.11E+00} & {\small{}77813} & {\small{}24861} & \textbf{\small{}2830}\tabularnewline
{\small{}64000} & {\small{}1} & {\small{}1.67E+01} & {\small{}82000} & {\small{}20373} & \textbf{\small{}1621}\tabularnewline
{\small{}256000} & {\small{}1} & {\small{}6.71E+01} & {\small{}81929} & {\small{}20767} & \textbf{\small{}1942}\tabularnewline
{\small{}1024000} & {\small{}1} & {\small{}2.68E+02} & {\small{}81882} & {\small{}20761} & \textbf{\small{}2297}\tabularnewline
{\small{}4096000} & {\small{}1} & {\small{}1.07E+03} & {\small{}81871} & {\small{}20759} & \textbf{\small{}2083}\tabularnewline
\hline 
\end{tabular}
\par\end{centering}
\caption{Numerical results with $\sigma=0.3$ and $\lambda=0.9/m$\label{tab:t2}}
\end{table}
\vspace{-5mm}

%%%%%%%%%%%%%%%%%%%%%%%%%%%%%%%%%%%%%%%%%%%%%%%%%%%%%%%%%%%%%%%%%%%%%%%%%%
%%%%%%%%%%%%%%%%%%%%%%%%%%%%%%%%%%%%%%%%%%%%%%%%%%%%%%%%%%%%%%%%%%%%%%%%%%

From the tables, we can conclude that if the curvature ratio $M/m$ is sufficiently large then the AIPP method performs fewer iterations than the PG and the AG methods.
This indicates that the QP-AIPP, which is based on the AIPP method, might be a promising approach towards solving linearly constrained
nonconvex optimization problems. This is due to the fact that the curvature ratios of the penalty subproblems grow substantially as $c$ increases
and, as a result, 
AIPP can efficiently solve them.
On the other hand, AIPP does not do well on instances whose associated curvature ratio is small. However, preliminary computational experiments seem to indicate
that a variant of AIPP can also efficiently solve instances with small curvature ratios by significantly choosing $\lambda$ much larger than $0.9/m$.
Since this situation is not covered by the theory presented in this paper, we are not reporting these results in this paper,
opting instead to leave this preliminary investigation for a future work.

\section{Concluding remarks} \label{sec:remarks}

%This paper has presented a penalty approach for solving \eqref{??} which solves the penalized subproblems via the AIPP method of Section \ref{??}
%and established that its overall inner iteration-complexity is bounded by ${\cal O}(1/(\hat \rho^2 \hat \eta))$ accelerated gradient iterations. 

Paper \cite{ProxAugLag_Ming} proposed a linearized version of the augmented Lagrangian method to solve \eqref{eq:probintro} but assumes the strong condition (among a few others) that $h=0$,
which most important problems arising in applications do not satisfy.
To circumvent this technical issue, \cite{Jiang2018} proposed a penalty ADDM  approach which
introduces an artificial variable $y$ in (1) and then penalizes $y$ to obtain
the penalized problem
\begin{equation} \label{eq:penpr}
\min \left \{ f(z) + h(z) + \frac{c}2 \|y\|^2 : Ax+ y =b \right \},
\end{equation}
which is then solved by a two-block  ADMM. Since \eqref{eq:penpr} satisfies the assumption that
its $y$-block objective function component has Lipschitz continuous gradient everywhere and its $y$-block coefficient matrix
is the identity, an iteration-complexity of the two-block ADDM for solving \eqref{eq:penpr},
and hence \eqref{eq:probintro},  can be established. More specifically, it has been shown in Remark 4.3 of \cite{Jiang2018}  that the overall number of composite gradient steps performed by the aforementioned two-block ADMM penalty scheme to obtain a triple $(\hat z,\hat v,\hat p)$
satisfying \eqref{eq:approxOptimCond_Intro} is bounded by ${\cal O}(\hat \rho^{-6})$
under the assumptions that $\hat \eta = \hat \rho$, the level sets of $f+h$ are bounded and the initial triple
$(z_0,y_0,p_0)$ satisfies $(y_0, p_0)=(0, 0)$,  $Az_0=b$ and $z_0 \in \dom h$.

Note that  the last complexity bound is derived under a boundedness assumption and
is worse than the one obtained in this paper  for the QP-AIPP method, namely ${\cal O}(\hat \rho^{-2} \hat \eta^{-1})$,
without any boundedness assumption.
Moreover, in contrast to the complexity of the QP-AIPP which is established for an arbitrary infeasible point $z_0 \in \dom h$,
the complexity bound  of the aforementioned two-block ADMM penalty scheme assumes that $z_0$ is feasible for \eqref{eq:probintro}.
%Moreover, the latter bound is obtained under a boundedness assumption and
%is worse than the one obtained in this paper  for the QP-AIPP method, namely ${\cal O}(\hat \rho^{-2} \hat \eta^{-1})$,
%which is obtained with no boundedness assumption.
In fact, as far as we know, QP-AIPP is the first method for solving \eqref{eq:probintro} from an infeasible starting point
with a guaranteed complexity bound under the general assumptions considered in this paper.

%Finally, a few papers

%It is interesting to compare our QP-AIPP method with the (one-block version of) a penalty-type ADMM proposed in \cite{??}.

%%%%%%%%%%%%%%%%%%%%%%%%%%%%%%%%%%%%%%%%%%%%%%%%%%%%%%%%%%%%%%%%%%%%%%%%%%
%%%%%%%%%%%%%%%%%%%%%%%%%%%%%%%%%%%%%%%%%%%%%%%%%%%%%%%%%%%%%%%%%%%%%%%%%%

\appendix
\section{Proof of Proposition~\ref{prop:refapproxsol}}\label{app:approxsollemma}

%This section provides the proof

We first state two technical lemmas before giving the proof of Proposition~\ref{prop:refapproxsol}.
%\begin{lemma} \label{lm:aux-hpe1}
%Let $\lambda>0$, $\tz \in \Re^n$ and $\psi \in \bConv{n}$ be given. Then, the problem
%\[
%\min_{(v,\varepsilon) \in \Re^n \times \Re_+} \left\{ \frac{\lambda}{2} \|v\|^2 + \varepsilon : v \in \partial_{\varepsilon}\psi(\tz) \right\}
%\]
%has a unique solution $(v^*,\varepsilon^*)$ given by
%\[
%v^* = \lambda^{-1} \left\{  \tz - [ I+ \lambda \partial \psi]^{-1} (\tz) \right\} , \quad
%\varepsilon^* = \psi^*(v^*) - \psi(\tz) - \inner{v^*}{\tz}.
%\]
%\end{lemma}
%\begin{proof}
%First, note that Proposition \ref{??} implies that \eqref{??} has a unique optimal solution $\tv$.
%Clearly, in view of Lemma \ref{??}, we have $v \in \partial_{\varepsilon}h(\tz)$ if and only if
%\[
%h^*(v) - h(\tz) -  \inner{v}{\tz} \le \varepsilon.
%\]
%Hence, noting that the term $-h(\tz)$ in the above left hand side does not depend on $v$,
%we conclude that $(v^*,\varepsilon^*)$ is an optimal solution of the above problem if and only if
%\[
%v^* \in \argmin_{v \in \Re^n} \left\{ \frac{\lambda}{2} \|v-a\|^2 + h^*(v) -  \inner{v}{\tz} \right\}
%\]
%and $\varepsilon^* = h^*(v^*) - h(\tz) - \inner{v^*}{\tz}$.  The optimality condition for the above problem then
%implies that $v^*$ satisfies
%\[
%{\lambda} (v^*-a) + \partial h^*(v^*) - \tz \ni 0,
%\]
%from which we conclude that
%\[
%v^*= (I + \lambda^{-1} \partial h^*)^{-1} ( \lambda ^{-1} \tz + a) =
%\lambda^{-1} \tz +  a - \lambda^{-1} [ I+ \lambda \partial h]^{-1} ( \tz + \lambda a)
%\]
%where the latter identity is due to Lemma \ref{lm:resol-conj} with $\beta=\lambda^{-1}$ and $u=\lambda ^{-1} \tz + a$.
%\end{proof}

\begin{lemma}\label{lem:approxsolreps}
Assume that $h\in\bConv{n}$, 
$z \in \dom h$ and $f$ is a differentiable function on $\dom h$ which, for some $L > 0$, satisfies
\begin{equation}\label{uppercurvature_f}
f(u)- \ell_f(u; z) \leq\frac{L}{2}\|u-z\|^2,\qquad \forall u \in \dom h,
\end{equation}
and define
\begin{align}
z_f = z(z;f)  &:= \argmin_u \left\{ \ell_f(u;z)  + h(u) + \frac{L}{2} \|u-z\|^2  \right \}, \label{eq:def_zpz} \\
 q_f = q(z;f) &:= L(z-z_f),  \label{eq:def_qf} \\
\delta_f = \delta(z;f) &:= h(z) - h(z_f) - \inner{q_f - \nabla f(z) }{z-z_f}.\label{eq:def_deltaf} 
\end{align}
Then, there hold
\begin{align}
& q_f \in \nabla f(z) + \partial h(z_f), \quad q_f \in \nabla f(z) + \partial_{\delta_f} h(z), \quad \delta_f \ge 0,\label{inclusion_qf}
\\
& (q_f,\delta_f) = \argmin_{(r,\varepsilon)} \left \{ \frac{1}{2L} \| r\|^2 + \varepsilon : r \in \nabla f(z) + \partial_{\varepsilon} h(z) \right\},\label{argmin_qfdeltaf}
\\
 & \delta_f+ \frac{1}{2L} \|q_f\|^2 \le  (f+h)(z) - (f+h)(z_f). \label{ineq:qfdeltaf}
%= \delta_f + \frac{L}2 \|z-z_f\|^2
\end{align} 
\end{lemma}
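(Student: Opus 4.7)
My plan is to handle the four conclusions \eqref{inclusion_qf}–\eqref{ineq:qfdeltaf} in the natural order, leveraging the first-order optimality of $z_f$ and Proposition~\ref{prop:transpForm} for the transport of subgradients.

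First, I will write the optimality condition for the strongly convex subproblem \eqref{eq:def_zpz}: it reads $0 \in \nabla f(z) + L(z_f - z) + \partial h(z_f)$, which together with the definition \eqref{eq:def_qf} of $q_f$ immediately yields the first inclusion in \eqref{inclusion_qf}, namely $q_f \in \nabla f(z) + \partial h(z_f)$. Then, to pass to the $\varepsilon$-subdifferential at $z$, I will apply Proposition~\ref{prop:transpForm} to the convex function $h$, with the roles $(v, z, \bar z) = (q_f - \nabla f(z), z_f, z)$. The proposition gives $q_f - \nabla f(z) \in \partial_{\delta_f} h(z)$ where the associated error coincides exactly with the definition \eqref{eq:def_deltaf} of $\delta_f$, and also guarantees $\delta_f \ge 0$. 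This settles the three claims in \eqref{inclusion_qf}.

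The main obstacle I anticipate is the variational characterization \eqref{argmin_qfdeltaf}, which is not just a restatement of optimality but requires a genuine quadratic-in-$r$ argument. For any feasible pair $(r,\varepsilon)$, the condition $r - \nabla f(z) \in \partial_\varepsilon h(z)$ gives the linear lower bound
\[
h(z_f) \ge h(z) + \langle r - \nabla f(z), z_f - z \rangle - \varepsilon,
\]
and using $z_f - z = -q_f/L$ together with the definition of $\delta_f$ I can rearrange this into the key inequality $\delta_f \le \varepsilon + L^{-1}\langle r - q_f, q_f \rangle$. Multiplying by $1$ and completing the square then yields
\[
\frac{1}{2L}\|r\|^2 + \varepsilon - \frac{1}{2L}\|q_f\|^2 - \delta_f \;\ge\; \frac{1}{2L}\|r\|^2 - \frac{1}{2L}\|q_f\|^2 - \frac{1}{L}\langle r-q_f, q_f\rangle = \frac{1}{2L}\|r-q_f\|^2 \ge 0,
\]
with equality when $r = q_f$ (and hence $\varepsilon = \delta_f$); this establishes that $(q_f,\delta_f)$ is indeed the unique minimizer in \eqref{argmin_qfdeltaf}.

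Finally, for \eqref{ineq:qfdeltaf} I will combine the upper-curvature bound \eqref{uppercurvature_f} applied at $u = z_f$ with the identity $z - z_f = q_f/L$ to obtain
\[
f(z) - f(z_f) \;\ge\; \tfrac{1}{L}\langle \nabla f(z), q_f\rangle - \tfrac{1}{2L}\|q_f\|^2.
\]
Rewriting $h(z) - h(z_f) = \delta_f + L^{-1}\langle q_f - \nabla f(z), q_f\rangle$ from the definition \eqref{eq:def_deltaf} and adding the two differences, the $\nabla f(z)$-cross terms cancel and the $\|q_f\|^2$-terms combine to give exactly $\delta_f + \|q_f\|^2/(2L)$, proving \eqref{ineq:qfdeltaf}. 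All four assertions will then be in hand.
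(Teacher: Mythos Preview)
Your proof is correct and follows essentially the same path as the paper's: optimality of \eqref{eq:def_zpz} plus Proposition~\ref{prop:transpForm} for \eqref{inclusion_qf}, the $\varepsilon$-subgradient inequality at $u=z_f$ for \eqref{argmin_qfdeltaf}, and the upper-curvature bound \eqref{uppercurvature_f} at $u=z_f$ for \eqref{ineq:qfdeltaf}. The only cosmetic difference is that in the argmin step you complete the square to exhibit the exact gap $\tfrac{1}{2L}\|r-q_f\|^2$ (which additionally yields uniqueness in $r$), whereas the paper bounds $\langle r,q_f\rangle$ via $2ab\le a^2+b^2$; the two computations are equivalent.
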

\begin{proof}
We first show that \eqref{inclusion_qf} holds.
The optimality condition for \eqref{eq:def_zpz} and the definition of $q_f$ in  \eqref{eq:def_qf} immediately yield the first inclusion in \eqref{inclusion_qf}.
Hence, it follows from Proposition~\ref{prop:transpForm} and the definition of $\delta_f$ in \eqref{eq:def_deltaf} that the second inclusion and the inequality in \eqref{inclusion_qf} also hold. 

We now show that \eqref{argmin_qfdeltaf} holds.
Clearly, the second inclusion in \eqref{inclusion_qf} implies that $(q_f,\delta_f)$ is feasible to \eqref{argmin_qfdeltaf}. 
Assume now that $(r,\varepsilon)$ satisfies $r \in \nabla f(z) + \partial_{\varepsilon} h(z)$, or equivalently,
$$
h(u)\geq h(z)+\langle r- \nabla f(z), u-z\rangle-\varepsilon\quad \forall u\in \Re^n.
$$
Using the above inequality
with $u=z_f$ and the definitions of  $q_f$ and $\delta_f$ given in  \eqref{eq:def_qf} and \eqref{eq:def_deltaf}, respectively,  we then conclude that
\begin{align*}
\delta_f+\frac{\|q_f\|^2}{2L}  
&= h(z) - h(z_f) - \inner{\nabla f(z) }{z_f-z}- \frac{L}{2} \|z-z_f\|^2\\
&\leq -\langle r, z_f-z\rangle +\varepsilon  - \frac{\|q_f\|^2}{2L}= \frac1L \langle r, q_f \rangle +\varepsilon - \frac{\|q_f\|^2}{2L}\\&\leq \frac{1}{2L}\|r\|^ 2 +\frac{1}{2L}\|q_f\|^2+\varepsilon- \frac{\|q_f\|^2}{2L}=\frac{1}{2L}\|r\|^ 2 +\varepsilon
\end{align*}
where the last inequality is due to Cauchy Schwarz inequality and $2ab\leq a^2+b^2$, for every $a,b \in \Re$. Hence  \eqref{argmin_qfdeltaf} holds.
%%Hence, using $u=z_f$ in the last inequality and rewriting it, we obtain
%%\begin{equation}\label{ineqaux:inclusion_reps}
%
% h(z)-h(z_f)+\langle\nabla f(z), z-z_f\rangle \leq \langle r, z-z_f\rangle +\varepsilon  \leq \frac{\|r\|^ 2}{2L} +\frac{L}{2}\|z-z_f\|^2+\varepsilon.
%\end{equation}
%On the other hand, using the definition of  $q_f$ and $\delta_f$ given in  \eqref{eq:def_qf} and \eqref{eq:def_deltaf}, respectively, we obtain 
%\[
%\delta_f+\frac{\|q_f\|^2}{2L}  
%= h(z) - h(z_f) + \inner{\nabla f(z) }{z-z_f}- \frac{L}{2}\|z-z_f\|^2.
%Hence, \eqref{argmin_qfdeltaf} follows from the last relation combined with \eqref{ineqaux:inclusion_reps}.
Finaly, to see that \eqref{ineq:qfdeltaf} holds, note that
the last relation, the definition of $\ell_f(\cdot;z)$ in \eqref{eq:defell} and inequality \eqref{uppercurvature_f} with $u=z_f$ imply  that
\begin{align*}
\delta_f+\frac{\|q_f\|^2}{2L}  &=(f+h)(z) - (f+h) (z_f) + [ f(z_f) - \ell_f(z_f;z) ]
- \frac{L}{2} \|z-z_f\|^2\\
&\leq (f+h) (z)-(f+h) (z_f).
\end{align*}
\end{proof}

%%%%%%%%%%%%%%%%%%%%%%%%%%%%%%%%%%%%%%%%%%%%%%%%%%%%%%%%%%%%%
%%%%%%%%%%%%%%%%%%%%%%%%%%%%%%%%%%%%%%%%%%%%%%%%%%%%%%%%%%%%%
\vgap
\begin{lemma}\label{lem:approxsolff}
Assume that $h\in \bConv{n}$, $z \in \dom h$  and $g$ is a differentiable function on $\dom h$ which, for some $M > 0$, satisfies
\eqref{uppercurvature_f} with $(f,L)$ replaced by $(g,M)$.
Let $\lambda>0$, $(z^-,z, w,\varepsilon) \in \Re^n \times \dom h\times \Re^n \times  \Re_+$ and $\rho>0$ be such that
\begin{equation} \label{inclusion_proxsolfg}
w \in \partial_\varepsilon \left( g + h + \frac{1}{2\lam} \|\cdot-z^-\|^2 \right)(z),
\quad \left \| \frac{1}\lam (z^--z) + w \right\| \le \rho
\end{equation}
 and   set
\begin{align} \label{eq:def_Lfzp}
&L =M + \lam^{-1}, \quad 
f(\cdot)=g(\cdot ) +\frac{1}{2\lam} \|\cdot -z^-\|^2-\langle w, \cdot\rangle, \\ % \quad \forall u\in \Re^n \\
& (z_f,q_f,\delta_f) = (z(z;f), q(z;f), \delta(z;f)), \nonumber
\end{align}
 where the quantities
$z(z;f)$, $q(z;f)$ and $\delta(z;f)$ are defined in \eqref{eq:def_zpz}, \eqref{eq:def_qf} and \eqref{eq:def_deltaf}, respectively.
Then, the following statements hold:

\begin{itemize}
\item[(a)] the pair $(v,\delta_f)$ where
\begin{equation}\label{def:vfg}
 v:= q_f + \frac{z^--z}{\lambda} + w ,
 \end{equation}
satisfies
\begin{align}
\label{eq:inclusion_vfg}
& v\in \nabla g(z) + \partial_{\delta_f} h (z),
\quad 0\leq\delta_f \leq \varepsilon; \\
\label{eq:inequality_vfg}
& \|v\|^2 + 2 \left( M+\lam^{-1} \right) \delta_f \le  \left  [ \rho + \sqrt{2(M+\lam^{-1})\varepsilon} \right ]^2;
\end{align}
\item[(b)]
if $\nabla g$ is $M$-Lipschitz continuous, then the pair $(z_f,v_f)$ where
\begin{equation}\label{def:vvff}
v_f:=v + \nabla g(z_f) - \nabla g(z) %\nabla f(z_p) - \nabla f(z) +  v'
\end{equation}
satisfies
\begin{equation} \label{eq:incl_vff}
v_f \in \nabla g(z_f) + \partial h (z_f), \quad  \| v_f \| \le \rho + 2\sqrt{2 (M+\lambda^{-1}) {\varepsilon}}.
\end{equation}
\end{itemize}

\end{lemma}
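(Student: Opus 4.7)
\textbf{Proof plan for Lemma~\ref{lem:approxsolff}.}
The plan is to reduce everything to Lemma~\ref{lem:approxsolreps} applied to the auxiliary function $f$ defined in \eqref{eq:def_Lfzp}. First I would verify that $f$ satisfies the upper-curvature bound with constant $L=M+\lambda^{-1}$: indeed, $f$ is obtained from $g$ (which satisfies the $M$-upper-curvature inequality on $\dom h$) by adding the quadratic $(2\lambda)^{-1}\|\cdot-z^-\|^2$ (whose Hessian contributes $\lambda^{-1}$) and a linear term (which does not affect curvature). Hence Lemma~\ref{lem:approxsolreps} applies, yielding the pair $(q_f,\delta_f)$ together with the inclusions
$q_f\in\nabla f(z)+\partial h(z_f)$, $q_f\in\nabla f(z)+\partial_{\delta_f}h(z)$, $\delta_f\ge 0$, and the descent bound
$\delta_f+(2L)^{-1}\|q_f\|^2\le (f+h)(z)-(f+h)(z_f)$.

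For part (a), using the identity $\nabla f(z)=\nabla g(z)+(z-z^-)/\lambda-w$, one sees immediately that the constants $(z^--z)/\lambda+w$ added to $q_f$ in the definition \eqref{def:vfg} of $v$ cancel the contribution of $\nabla f$ minus $\nabla g$, so that $v\in\nabla g(z)+\partial_{\delta_f}h(z)$. The key step is then the translation of $w\in\partial_\varepsilon\phi(z)$ into a bound on the right-hand side of the descent inequality. Writing $f+h=\phi-\langle w,\cdot\rangle$, we get
\[
(f+h)(z)-(f+h)(z_f)=\phi(z)-\phi(z_f)-\langle w,z-z_f\rangle\le \varepsilon,
\]
where the last inequality is exactly the defining inequality for $w\in\partial_\varepsilon\phi(z)$ evaluated at $u=z_f$. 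Combining with the descent bound yields both $\delta_f\le\varepsilon$ and $\|q_f\|^2\le 2L(\varepsilon-\delta_f)$. The norm bound in \eqref{eq:inequality_vfg} then follows by the triangle inequality $\|v\|\le\|q_f\|+\|(z^--z)/\lambda+w\|\le\sqrt{2L(\varepsilon-\delta_f)}+\rho$ together with the hypothesis in \eqref{inclusion_proxsolfg}; squaring and adding $2L\delta_f$ gives the target since $\sqrt{\varepsilon-\delta_f}\le\sqrt{\varepsilon}$.

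For part (b), I would repeat the algebra above but starting from the \emph{exact} subdifferential inclusion $q_f\in\nabla f(z)+\partial h(z_f)$, obtaining $v\in\nabla g(z)+\partial h(z_f)$. Adding $\nabla g(z_f)-\nabla g(z)$ to both sides gives $v_f\in\nabla g(z_f)+\partial h(z_f)$. The norm bound uses $M$-Lipschitz continuity of $\nabla g$ together with $\|z-z_f\|=\|q_f\|/L$ (from \eqref{eq:def_qf}), yielding $\|\nabla g(z_f)-\nabla g(z)\|\le M\|q_f\|/L\le (M/L)\sqrt{2L\varepsilon}\le\sqrt{2L\varepsilon}$ because $M\le L=M+\lambda^{-1}$; combined with the bound on $\|v\|$ from part (a), this gives $\|v_f\|\le\rho+2\sqrt{2L\varepsilon}$.

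The only delicate point in the plan is the translation step for $\delta_f\le\varepsilon$: it is tempting but incorrect to try to split $w\in\partial_\varepsilon\phi(z)$ into a subgradient of $h$ plus a smooth part, since $g$ is nonconvex and $\partial_\varepsilon(\cdot)$ does not distribute over such a sum. The clean route is to bypass this by working with function values as above, so that the nonconvex part $g$ enters only through $f$, whose structure is already absorbed into $(q_f,\delta_f)$ via Lemma~\ref{lem:approxsolreps}.
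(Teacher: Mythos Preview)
Your proposal is correct and follows essentially the same route as the paper: reduce to Lemma~\ref{lem:approxsolreps} for the auxiliary $f$, translate the $\varepsilon$-subdifferential inclusion into $(f+h)(z)-(f+h)(z_f)\le\varepsilon$, and then combine the descent bound with the triangle inequality. The only cosmetic difference is that the paper packages the final algebra as $\|q_f\|^2+2L\delta_f\le 2L\varepsilon$ and expands $\|v\|^2\le(\rho+\|q_f\|)^2$ directly, whereas you keep the slightly sharper $\|q_f\|^2\le 2L(\varepsilon-\delta_f)$ before bounding; both arrive at \eqref{eq:inequality_vfg} identically.
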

\begin{proof}
(a) First note that the pair $(f,L)$ defined in \eqref{eq:def_Lfzp} satisfies \eqref{uppercurvature_f}
and that $\lambda$ and $(z^-,z,w,\varepsilon)$ satisfy the inclusion in \eqref{inclusion_proxsolfg} if and only if 
$
0 \in \partial_\varepsilon( f+h ) (z),
$
or equivalently,   $(f+h)(u) \ge (f+h)(z) - \varepsilon$ for every $u$.
In particular, the latter inequality with $u=z_f$ implies that
$
(f+h)(z) - (f+h)(z_f)\leq  \varepsilon. %\le \bar\varepsilon
$
Hence, combining the last inequality, Lemma~\ref{lem:approxsolreps} with $(f,L)$ as in \eqref{eq:def_Lfzp}, and the definition of $v$ given in \eqref{def:vfg}, we conclude that
the relations in \eqref{eq:inclusion_vfg} hold and
\begin{equation}\label{requiredIneq1ff}
\| q_f\|^2 + 2 (M+ \lam^{-1}) \delta_f \le 2(M+\lambda^{-1})\varepsilon. % \quad \|q\|\leq \sqrt{8(M+\lambda^{-1}) \varepsilon}.
\end{equation}
Now, the inequality in \eqref{inclusion_proxsolfg}, definition of $v$ in \eqref{def:vfg} and the triangle inequality for
norms  imply  that
$\|v\| \le \rho + \|q_f\|$, and hence, in view of \eqref{requiredIneq1ff}, that
\begin{align*}
 \|v\|^2 + 2  (M+ \lam^{-1}) \delta_f & \le % ( \|q_f\| + \rho)^2  + 2 (M+ \lam^{-1}) \delta_f  = 
\left[ \|q_f\|^2 + 2 (M+ \lam^{-1}) \delta_f \right] + 2 \rho \|q_f\| + \rho^2 \\
& \le 2(M+ \lam^{-1}) \varepsilon + 2 \rho \sqrt{2 (M+ \lam^{-1}) \varepsilon} + \rho^2 \\
&= \left[ \rho + \sqrt{2 (M+ \lam^{-1}) \varepsilon} \right]^2,
\end{align*}
showing that \eqref{eq:inequality_vfg}  holds.

\noindent(b) The inclusion in \eqref{eq:incl_vff} follows immediately from the first inclusion in \eqref{inclusion_qf} and definition of $v_f$ in \eqref{def:vvff}.
Finally, using  the assumption that $\nabla g$ is  $M$-Lipschitz continuous, the triangle inequality for norms,   definition of $v_f$ and $q_f$ in \eqref{def:vvff} and  \eqref{eq:def_qf}, respectively,
we conclude that 
\[
\|v_f \| - \|v\| \le \norm{v_f-v} = \norm{\nabla g(z_f) - \nabla g(z)} \le  M \norm{z_f-z} = \frac{M}{M+\lam^{-1}} \|q_f\| \le \|q_f\|
\]
and hence,  in view of \eqref{eq:inequality_vfg} and the inequality in  \eqref{inclusion_proxsolfg},  the inequality in \eqref{eq:incl_vff}  holds.
\end{proof}
\vgap

Lemma \ref{lem:approxsolff} assumes that the inclusion in \eqref{eq:ref4'} holds, or equivalently, that the function
$f$ defined in \eqref{eq:def_Lfzp} satisfies $(f+h)(u) \ge (f+h)(z) - \varepsilon$ for every $u$.
However, a close examination of its proof shows that the latter inequality is used only for $u=z_f$.
%Hence, the above result would also hold for variants of the GIPP framework where the inclusion in \eqref{eq:ref4'} is replaced to checking to
%the inequality $f(u) \ge f(z) - \varepsilon$ on a finite number of $u$'s, including the point $z_p$.

\vgap

%\begin{itemize}
%\item[(a)] $q_g \in \nabla g(z) + \partial  h(z_g)$ and
%\[
%\left( M+\lam^{-1} \right) \|z-z_g\| = \|q_g\|  \le \rho + \sqrt{2\varepsilon (M+\lam^{-1}) };
%\]
%\item[(b)] $\delta_g \ge 0$, $q_g \in \nabla g(z) + \partial_{\delta_g}  h(z)$ and
%\[
%\|q_g\|^2 + 2 (M+\lam^{-1}) \delta_g  \le  \left  [ \rho + \sqrt{2\varepsilon (M+\lam^{-1}) } \right ]^2;
%\]
%\item[(c)] if $\nabla g$ is
%$M$-Lipschitz continuous on $\dom h$, then 
%\[
%v_g \in \nabla g(z_g) + \partial  h(z_g), \quad \|v_g\| \le 2 \|q_g\| \le 2 \left  [ \rho + \sqrt{2\varepsilon (M+\lam^{-1}) } \right ].
%\]
%\end{itemize}

{\bf  Proof of Proposition~\ref{prop:refapproxsol}}.  
First note that, since $\nabla g$ satisfies the second inequality in \eqref{ineq:concavity_g},  we see that  \eqref{uppercurvature_f} is satisfied with $f=g$ and $L=M$ (in particular $L=M+\lambda^{-1}$). Moreover, the elements defined in \eqref{eq:def_zg}, \eqref{eq:def_qg}, and \eqref{eq:def_deltag} correspond to \eqref{eq:def_zpz}, \eqref{eq:def_qf}, and \eqref{eq:def_deltaf}, respectively,  with $f$ replaced by $g$ and $L$ replaced by  $M+\lambda^{-1}$. Hence,  the inclusions in (a) and (b) as well as the first inequality in (b) follow immediately from \eqref{inclusion_qf}.
Now note that  the equality in (a) follows immediately from the definition of $q_g$ in \eqref{eq:def_qg}. Moreover,  the inequality in (b) implies the inequality in  (a). Hence,  let us proceed to prove the inequality in (b).
It follows from Lemma~\ref{lem:approxsolff} that the pair $(v,\delta_f)$ as in \eqref{def:vfg} and \eqref{eq:def_deltaf} satisfies the inclusion in  \eqref{eq:inclusion_vfg} and hence due to \eqref{argmin_qfdeltaf} with $(f,L)$ replaced by $(g,M+\lambda^{-1})$ and  \eqref{eq:inequality_vfg}, we have
\begin{align*}
\|q_g\|^2+2\left( M+\lam^{-1} \right)\delta_g\leq & \|v\|^2 + 2 \left( M+\lam^{-1} \right) \delta_f \le  \left  [\bar \rho + \sqrt{2(M+\lam^{-1})\varepsilon} \right ]^2,
\end{align*}
proving the second inequality in (b), and consequently concluding the proof of (a) and (b). Now to prove (c) 
first note that the inclusion follows immediately from the inclusion in (a) and the definition of $v_g$ in  \eqref{eq:def_vg}. On the other hand, the $M$-Lipschitz continuity of $\nabla g$ together with the definitions of  $q_g$ and  $v_g$ in \eqref{eq:def_qg} and \eqref{eq:def_vg}, respectively, and  the triangle inequality for norms imply that
\begin{equation*}
\|v_g\| \leq M\|z-z_g\| + \|q_g\|= \frac{M}{M+\lam^{-1}} \|q_g\| + \|q_g\|\leq 2\|q_g\|
\end{equation*}
which combined with  the inequality in (a) proves the inequality in (c). \hfill{ $\square$}

%%%%%%%%%%%%%%%%%%%%%%%%%%%%%%%%%%%%%%%%%%%%%%%%%%%%%%%%%%%%%%%%%%%%%%%
%%%%%%%%%%%%%%%%%%%%%%%%%%%%%%%%%%%%%%%%%%%%%%%%%%%%%%%%%%%%%%%%%%%%%%%
 
\section{Proof of Proposition~\ref{prop:gradient method}}\label{app:proofgradmet}

From the optimality condition for \eqref{gradientmethod}, we obtain
\begin{equation}\label{inc:appgradmet}
0\in \nabla g(z_{k-1})+\frac{z_k-z_{k-1}}{\lambda}+ \partial h(z_k). 
\end{equation}
Now let
 \begin{equation}\label{eq:psilambda}
 \Psi_\lambda=\Psi_{\lambda,k}:=g+\frac{1}{2\lambda}\|\cdot-z_{k-1}\|^2, \quad r_k:=\frac{z_{k-1}-z_k}{\lambda},
\end{equation} 
and note that $ \nabla \Psi_\lambda(z_{k-1})=\nabla g(z_{k-1})$, and  that $\Psi_\lambda$ is convex due to \eqref{ineq:concavity_g} and the assumption $\lambda <1/m$. Hence  Proposition~\ref{prop:transpForm}  yields
$
\nabla g(z_{k-1})=\nabla \Psi_\lambda(z_{k-1}) \in \partial_{\varepsilon_k} \Psi_\lambda(z_{k})
$
where  $\varepsilon_k=\Psi_\lambda (z_k)-\Psi_\lambda(z_{k-1})-\langle \nabla\Psi_\lam(z_{k-1}), z_k-z_{k-1}\rangle\geq 0$. 
The above inclusion combined with \eqref{inc:appgradmet} and  definition of $r_k$
imply that
$
r_k \in  \partial h(z_k)+\partial_{\varepsilon_k} \Psi_\lambda(z_k)\subset \partial_{\varepsilon_k} (h + \Psi_\lambda)(z_k)
$
where the last inclusion follows immediately from the definition of the operator $\partial_{\varepsilon_k}$ and convexity of $h$.
Hence, since  $(\tilde \varepsilon_k,\tilde v_k)=\lambda (\varepsilon_k,r_k)$ (see  \eqref{eq:statCGM} and  \eqref{eq:psilambda}), it follows from the above inclusion and the definition of $\Psi_\lambda$  that  the triple $(z_k,\tilde v_k,\tilde \varepsilon_k)$  satisfies the inclusion in \eqref{inclusion:GIPPF} with $\phi=g+h$ and $\lambda_k=\lambda$.

Now, to prove that the inequality in \eqref{eq:errocritGIPP} holds, first note that the definitions of $\varepsilon_k$ and $\Psi_\lambda$ together with property \eqref{ineq:concavity_g}, imply that 
$
\varepsilon_k\leq (\lambda M+1)\|z_{k-1}-z_k\|^2/(2\lambda).
$ Combining the latter inequality with the relations $\tilde v_k=z_{k-1}-z_k$ and $\tilde\varepsilon_k=\lambda\varepsilon_k$, we obtain
\begin{align*}
\|\tilde v_k\|^{2}+2\tilde \varepsilon_k&=\|z_{k-1}-z_k\|^2+2\lambda\varepsilon_k\leq \|z_{k-1}-z_k\|^2+(\lambda M+1)\|z_{k-1}-z_k\|^2\\[2mm]
&=(\lambda M+2)\|z_{k-1}-z_k\|^2= \frac{\lambda M+2}{4}\|z_{k-1}-z_k+\tilde v_k\|^2.
\end{align*}
Hence, since  $\lambda M <2$,  we conclude  that  $\sigma = (\lambda M+2)/4<1$ and  that   \eqref{eq:errocritGIPP} holds.  \hfill{ $\square$}

%\section{Concluding remarks}

%Convergence of the proximal point method for nonconvex 
%problems using analytic features \cite{Attouch2009}?????
%Accelerated method Attouch\cite{Attouch2016}

%\raggedright
%\bibliographystyle{siamplain}
%\bibliography{Proxacc_ref}
%\def\cprime{$'$}

\end{document}